\let\OLDthebibliography\thebibliography
\renewcommand\thebibliography[1]{
	\OLDthebibliography{#1}
	\setlength{\parskip}{0pt}
	\setlength{\itemsep}{0pt plus 0.3ex}
}
\numberwithin{equation}{section}
\newtheorem{theorem}{\bf Theorem}[section]
\newtheorem{proposition}[theorem]{\bf Proposition}
\newtheorem{corollary}[theorem]{\bf Corollary}
\newtheorem{lemma}[theorem]{\bf Lemma}
\theoremstyle{remark}
\newtheorem{definition}[theorem]{\bf Definition}
\newtheorem{example}[theorem]{\bf Example}
\newtheorem{remark}[theorem]{\bf Remark}
\newcommand{\beq}{\begin{eqnarray}}
\newcommand{\eeq}{\end{eqnarray}}
\newcommand{\beqn}{\begin{eqnarray*}}
\newcommand{\eeqn}{\end{eqnarray*}}
\newcommand{\rar}{\rightarrow}
\newcommand*{\mf}{\mathbf}
\newcommand*{\Ge}{\geqslant}
\newcommand*{\inp}[2]{\langle{#1},\,{#2} \rangle}
\newcommand*{\Le}{\leqslant}
\newtheorem{step}{Step}
\begin{document}

\title[Dirichlet series kernels]
{Quasi-invariance of the Dirichlet series kernels, analytic symbols and \\ homogeneous operators}

   \author{Sameer Chavan}
   \author{Chaman Kumar Sahu}
   \address{Department of Mathematics and Statistics\\
Indian Institute of Technology Kanpur, India}
   \email{chavan@iitk.ac.in}
\email{chamanks@iitk.ac.in}

\subjclass[2020]{Primary 11M99, 46E22; Secondary 47B25, 11Z05.}
\keywords{Dirichlet series, Dirichlet series kernel, analytic symbol, quasi-invariance, homogeneous operator}

\begin{abstract} 
For a scalar matrix $\mf a=(a_{m, n})_{m, n=1}^{\infty},$ the Dirichlet series kernel $\kappa_{\mf a}$ 
is the double Dirichlet series $\kappa_{\mf a}(s, u) = \sum_{m, n =1}^{\infty} a_{m, n}m^{-s} n^{-\overline{u}}$ in the variables $s$ and $\overline{u},$ which is regularly convergent on some right half-plane $\mathbb H_\rho.$
The analytic symbols $A_{n, \mf a} = \sum_{m=1}^{\infty}a_{m, n}m^{-s},$ $n \Ge 1$ play a central role in the study of the reproducing kernel Hilbert space $\mathscr H_{\mf a}$ associated with the positive semi-definite kernel $\kappa_{\mf a}.$ 
In particular, they form a total subset of $\mathscr H_{\mf a}$ and provide the formula $\sum_{n=1}^{\infty}\inp{f}{A_{n, \mf a}}n^{-s},$ $s \in \mathbb H_\rho,$ for $f \in \mathscr H_{\mf a}.$ 
We combine the basic theory of Dirichlet series kernels with the Gelfond-Schneider theorem (Hilbert's seventh problem) to show that any quasi-invariant Dirichlet series kernel 
$\kappa_{\mf a}(s, u)$ factors as $f(s)\overline{f(u)}$ for some Dirichlet series $f$ on $\mathbb H_\rho.$ 
In particular, there is no quasi-invariant Dirichlet series kernel $\kappa_{\mf a}$ if the dimension of $\mathscr H_{\mf a}$ is bigger than one. 
This is in strict contrast with the case of the unit disc, where non-factorable quasi-invariant kernels exist in abundance.
 We further discuss the Dirichlet series kernels $\kappa_{\mf a}$ invariant under the group $\mathscr T$ of translation automorphisms of $\mathbb H_\rho$ and construct a family of densely defined $\mathscr T$-homogeneous operators in $\mathscr H_{\mf a},$ whose adjoints are defined only at the zero vector.
\end{abstract}

\maketitle


\section{Preliminaries}

In this section, we collect preliminaries related to the general Dirichlet series and explain the role of the analytic symbols associated with the Dirichlet series kernels in the study of the reproducing kernel Hilbert spaces of Dirichlet series. 

	\subsection{Dirichlet series in one and two variables}
	Denote by $\mathbb N$ and $\mathbb Z_+,$ the sets of positive integers and non-negative integers, respectively.  
	Let $\mathbb Q,$ $\mathbb R$ and $\mathbb C$ denote the sets of rational, real and complex numbers, respectively. 
		For $s \in \mathbb C,$ let $\Re(s),$ $\Im(s),$ $\overline{s},$ $|s|,$ and $\arg(s)$ denote the real part, the imaginary part, the complex conjugate, the modulus and the argument of $s,$ respectively.
		The open unit disc $\{z \in \mathbb C : |z| < 1\}$ is denoted by $\mathbb D,$ while the open upper half-plane $\{z: \Im(z) > 0\}$ is denoted by $\mathbb H.$
	For $\rho \in \mathbb R,$ let $\mathbb H_\rho$ denote the open right half-plane $\{s \in \mathbb C : \Re(s) > \rho\}.$
	By a {\it domain}, we understand a nonempty open subset of $\mathbb C.$ 
For a domain $\Omega,$ let 
$\text{Hol}(\Omega)$ denote the linear space of complex-valued holomorphic functions on $\Omega$ and 
$\mathrm{Aut}(\Omega)$ denote the group of automorphisms  of $\Omega.$	
	
A \textit{general Dirichlet series} is a series of the form
$
 f(s) = \sum_{n=1}^{\infty} a_n e^{-\lambda_ns},$
 where $a_n$ are complex numbers and $\{\lambda_n\}_{n \Ge 1}$ is a sequence of non-negative exponents which increases to $+\infty$ (the reader is referred to \cite{HR} for the basic theory of the general Dirichlet series). The {\it abscissa of absolute convergence} of the series $f(s) = \sum_{n=1}^{\infty} a_{n} e^{-\lambda_n s}$ is the extended real number given by
	$$\sigma_{a}(f) = \inf \big\{\Re(s) : f(s) ~\mbox{is absolutely convergent} \big\},$$
	where we used the conventions that the infimum of an empty set is $\infty$ and infimum of a set not bounded from below is $-\infty.$
\begin{remark} \label{D-S-rmk-0}
If the general Dirichlet series 
$f(s)=\sum_{n=1}^{\infty}\alpha_n e^{-\lambda_n s}$ is convergent in $\mathbb H_\rho,$ then by \cite[Theorem~9]{HR}, the abscissa $\sigma_a(f)$ of the absolute convergence of $f$ satisfies $$\sigma_a(f)  \Le \rho + \limsup_{n \rar \infty}\frac{\log(n)}{\lambda_n}.$$ 
If $\lambda_n:=\omega \log(n),$ $n \Ge 1$ for some $\omega >0,$ then $\sigma_a(f)  \Le \rho + \omega^{-1}.$ 
\end{remark}

The general Dirichlet series is uniquely determined in the following sense. 
	\begin{proposition}[Uniqueness of the general Dirichlet series]  \label{uds} 
For $\rho \in \mathbb R,$ suppose that the general Dirichlet series $f(s)=\sum_{n = 1}^{\infty} a_{n} e^{-\lambda_n s}$ converges on $\mathbb H_{\rho}.$ If $f = 0$ on $\mathbb H_{\rho},$ then $a_{n} = 0$ for every integer $n \Ge 1.$
	\end{proposition}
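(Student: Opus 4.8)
The plan is to argue by contradiction. Suppose $f(s)=\sum_{n=1}^{\infty}a_n e^{-\lambda_n s}$ converges on $\mathbb H_\rho$ and vanishes there, but not all $a_n$ are zero; let $k:=\min\{n\in\mathbb N:a_n\neq 0\}$. Multiplying $f$ by the entire function $e^{\lambda_k s}$ yields the general Dirichlet series
$$g(s):=e^{\lambda_k s}f(s)=a_k+\sum_{n>k}a_n e^{-(\lambda_n-\lambda_k)s},$$
which again converges throughout $\mathbb H_\rho$ (the exponents $\lambda_n-\lambda_k$ being nonnegative and increasing to $+\infty$) and vanishes there. It therefore suffices to show $g(\sigma)\to a_k$ as $\sigma\to+\infty$ along the reals: evaluating $g$ along the ray $s_0+t$, $t\geq 0$, for any fixed $s_0\in\mathbb H_\rho$ then forces $a_k=0$, a contradiction.

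For the limiting value I would first record the standard fact that a general Dirichlet series convergent at a single point $s_0$ has uniformly bounded partial sums: $S_N:=\sum_{n=1}^{N}a_n e^{-\lambda_n s_0}$ is a convergent, hence bounded, sequence. A summation by parts then rewrites, for real $t\geq 0$,
$$\sum_{n=1}^{\infty}a_n e^{-\lambda_n(s_0+t)}=\sum_{n=1}^{\infty}S_n\bigl(e^{-\lambda_n t}-e^{-\lambda_{n+1}t}\bigr),$$
the boundary term $S_N e^{-\lambda_N t}$ vanishing in the limit because $S_N$ is bounded and $\lambda_N\to\infty$; the telescoping factors $e^{-\lambda_n t}-e^{-\lambda_{n+1}t}$ are nonnegative and sum to $e^{-\lambda_1 t}$. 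Applying this to the tail series $\sum_{n>k}a_n e^{-(\lambda_n-\lambda_k)(s_0+t)}$ — whose exponents $\mu_n:=\lambda_n-\lambda_k$ are positive and increase to $\infty$, and whose coefficients sum, at $t=0$, to a convergent series since $f$ converges at $s_0$ — bounds this tail by $C\,e^{-\mu_{k+1}t}$ with $C$ depending only on $s_0$ and $f$. Hence the tail tends to $0$ as $t\to+\infty$, so $g(s_0+t)\to a_k$.

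One subtlety worth flagging is that it is tempting to obtain the tail estimate from absolute convergence far to the right, but Remark~\ref{D-S-rmk-0} only bounds the abscissa of absolute convergence by $\rho+\limsup_n\log(n)/\lambda_n$, which may be $+\infty$ when the exponents grow slowly; the Abel-summation argument above circumvents this, using nothing beyond ordinary convergence at the single point $s_0$. The remaining points are routine: checking that $e^{\lambda_k s}f(s)$ is again a general Dirichlet series with exponents increasing to $\infty$, and that $s_0+t\in\mathbb H_\rho$ for every $t\geq 0$. I expect the Abel-summation tail estimate to be the only step that genuinely requires care.
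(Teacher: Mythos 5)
Your proof is correct, but it takes a different route from the paper's. The paper disposes of the proposition in two lines: it observes that convergence at $s_0$ is equivalent to convergence at $0$ of the shifted series $\sum a_n e^{-\lambda_n s_0}e^{-\lambda_n s}$, and then invokes the uniqueness theorem of Hardy--Riesz (\cite[Theorem~6]{HR}) as a black box. What you have done is, in effect, to reprove that cited theorem from scratch: isolate the first nonzero coefficient $a_k$, normalize by $e^{\lambda_k s}$, and show via Abel summation that the tail is $O(e^{-(\lambda_{k+1}-\lambda_k)t})$ along the ray $s_0+t$, using only boundedness of the partial sums at the single point $s_0$. This buys a self-contained argument, and you are right to flag that one cannot shortcut the tail estimate through absolute convergence (Remark~\ref{D-S-rmk-0} gives no finite abscissa when $\limsup_n \log(n)/\lambda_n=\infty$); the summation-by-parts step is genuinely needed. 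Two small points of hygiene: the identity $\sum_n a_n e^{-\lambda_n(s_0+t)}=\sum_n S_n\bigl(e^{-\lambda_n t}-e^{-\lambda_{n+1}t}\bigr)$ holds for $t>0$, not $t=0$, since the boundary term $S_Ne^{-\lambda_N t}$ only vanishes when $t>0$ --- harmless here because you only send $t\to+\infty$; and the partial sums you actually need bounded are those of the tail $\sum_{n>k}a_ne^{-\mu_n s_0}$, which you correctly obtain from convergence of $f$ at $s_0$ after multiplying by the constant $e^{\lambda_k s_0}$.
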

	\begin{proof}
	Since $\sum_{n=1}^{\infty} a_n e^{-\lambda_ns}$ converges at $s = s_0$
	if and only if the series $ \sum_{n=1}^{\infty} a_n e^{-\lambda_n s_0} e^{-\lambda_n s}$ converges at $s=0,$ the desired uniqueness is immediate from \cite[Theorem~6]{HR}.
	\end{proof}
	 A {\it Dirichlet series} is a series of the form
	$f(s) = \sum_{n=1}^{\infty} a_{n} n^{-s},$
	where $a_n$ are given complex numbers and $s$ is the complex variable. 
Note that the Dirichlet series is obtained from the general Dirichlet series by letting $\lambda_n = \log(n),$ $n \Ge 1.$	
	If $f$ is convergent at $s=s_{0}$, then it converges 
	uniformly throughout the angular region $\{s \in \mathbb C : |\arg(s-s_0)| < \frac{\pi}{2} -\delta\}$ for every positive real number $\delta < \frac{\pi}{2}.$ Consequently, the series $f$ defines a holomorphic function on $\mathbb H_{\Re(s_0)}$ (refer to \cite[Chapter~IX]{Ti} for an excellent exposition on the theory of Dirichlet series).
A {\it Dirichlet polynomial} is a Dirichlet series $f(s)$ given by 
$f(s) = \sum_{n=1}^{N} a_{n} n^{-s}$
for some $N \in \mathbb N.$ 
	
%
	

	A {\it double Dirichlet series} is a series of the form
		\begin{equation*}
			\sum_{m, n =1}^{\infty} a_{m, n} m^{-s} n^{-u},
		\end{equation*}
	where $a_{m, n}$ are given complex numbers and $s, u$ are the complex variables.
	In several variables, the following notion of convergence introduced by Hardy serves our purpose (see also \cite{CGM}). 
		A double Dirichlet series 
		\begin{equation*}
		f(s, u) =	\sum_{m, n =1}^{\infty} a_{m, n} m^{-s} n^{-u}
		\end{equation*}
		is {\it regularly convergent  at $(s_0,u_0) \in \mathbb{C}^{2}$} if the series $f(s, u)$ is convergent at $(s_0,u_0)$, $\sum_{m=1}^{\infty} a_{m, n} m^{-s_0}$ and $\sum_{n =1}^{\infty} a_{m, n} n^{-u_0}$ are convergent for all integers $m, n \Ge 1.$ For 
	$\rho_1, \rho_2 \in \mathbb R,$	we say that $f(s, u)$ is {\it regularly convergent on $\mathbb H_{\rho_1} \times \mathbb H_{\rho_2}$} if it is regularly convergent  at 
	every point	$(s_0, u_0) \in \mathbb H_{\rho_1} \times \mathbb H_{\rho_2}.$

The following lemma is related to the product of two general Dirichlet series and it will be required in Section~\ref{4}.
\begin{lemma} \label{PR-GE-DI}
	Let $f$ and $g$ be two general Dirichlet series given by
	\beqn
	f(s) = \sum_{m = 1}^{\infty} a_{m} e^{-\lambda_ms}  \quad and \quad g(s) = \sum_{m = 1}^{\infty} b_{m} e^{-\mu_ms},
	\eeqn
	which are absolutely convergent on $\mathbb H_\rho.$ 
	If the function $\varphi: \mathbb N \times \mathbb N \rar \mathbb R$ defined by $\varphi(m,n) = \lambda_m+\mu_n,$ $m, n \in \mathbb N,$ is  injective, then 
	\beqn 
	f(s)g(s) = \sum_{q = 1}^{\infty}a_{m_q}b_{n_q} e^{-\nu_qs}, \quad s \in \mathbb H_\rho,
	\eeqn
	where $\{\nu_q\}_{q \Ge 1}$ is a sequence formed by putting the values of $\lambda_m +\mu_n$ in the ascending order and $\varphi^{-1}(\{\nu_q\}) = (m_q, n_q),~ q \Ge 1.$
\end{lemma}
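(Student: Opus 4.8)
The plan is to exploit the absolute convergence of both series on $\mathbb H_\rho$ to justify rearranging the Cauchy-type product freely, and then use the injectivity of $\varphi$ to read off the exponents. Fix $s \in \mathbb H_\rho$. Since $f(s) = \sum_{m\Ge 1} a_m e^{-\lambda_m s}$ and $g(s) = \sum_{n \Ge 1} b_n e^{-\mu_n s}$ are both absolutely convergent at $s$, the double family $\{a_m b_n e^{-(\lambda_m+\mu_n)s}\}_{m,n\Ge 1}$ is absolutely summable: indeed $\sum_{m,n} |a_m b_n| e^{-(\lambda_m+\mu_n)\Re(s)} = \big(\sum_m |a_m| e^{-\lambda_m \Re(s)}\big)\big(\sum_n |b_n| e^{-\mu_n \Re(s)}\big) < \infty$. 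Hence, by Fubini's theorem for sums (or the theorem on unconditional convergence of absolutely convergent double series), $f(s)g(s) = \sum_{m,n\Ge 1} a_m b_n e^{-(\lambda_m+\mu_n)s}$, and the sum may be taken in any order.

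Next I would reorganize this double sum by grouping terms according to the value of the exponent $\lambda_m + \mu_n = \varphi(m,n)$. Because $\varphi$ is injective, each value $\nu$ in the range of $\varphi$ is attained by exactly one pair $(m,n)$; writing $\varphi^{-1}(\nu_q) = (m_q, n_q)$, the term with exponent $\nu_q$ is precisely $a_{m_q} b_{n_q} e^{-\nu_q s}$, with no coalescing of distinct pairs into a single coefficient. It remains to check that the set of values $\{\lambda_m + \mu_n : m,n \Ge 1\}$ can indeed be enumerated in strictly increasing order as $\nu_1 < \nu_2 < \cdots$ tending to $+\infty$: this follows since $\lambda_m \to +\infty$ and $\mu_n \to +\infty$, so for any bound $C$ only finitely many pairs $(m,n)$ satisfy $\lambda_m + \mu_n \le C$; thus the range of $\varphi$ is a discrete subset of $\mathbb R$ with no finite accumulation point, order-isomorphic to $\mathbb N$. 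Relabeling the absolutely convergent sum according to this enumeration gives $f(s)g(s) = \sum_{q\Ge 1} a_{m_q} b_{n_q} e^{-\nu_q s}$ for every $s \in \mathbb H_\rho$, which is the claimed identity; moreover $\{\nu_q\}$ is increasing to $+\infty$, so the right-hand side is itself a general Dirichlet series.

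The only mild subtlety — hardly an obstacle — is bookkeeping: making sure that the grouping of an unconditionally convergent double series into blocks indexed by the distinct exponent values is legitimate (it is, precisely because of absolute convergence) and that injectivity of $\varphi$ is exactly what prevents two admissible pairs from landing on the same exponent. No issue of convergence of the resulting series arises separately, since it is obtained by a rearrangement of an absolutely convergent sum; in particular its abscissa of absolute convergence is $\le \rho$. I would therefore present the argument in three short steps: (i) absolute summability of the double family on $\mathbb H_\rho$; (ii) discreteness of the exponent set $\{\lambda_m+\mu_n\}$ and hence the existence of the increasing enumeration $\{\nu_q\}$; (iii) rearrangement plus injectivity of $\varphi$ to identify the $q$-th coefficient as $a_{m_q} b_{n_q}$.
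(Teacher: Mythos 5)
Your argument is correct and matches the paper's approach: the paper likewise uses injectivity of $\varphi$ to see that each exponent value $\nu_q$ comes from exactly one pair $(m_q,n_q)$, and then delegates the absolute-summability/rearrangement step to \cite[Theorem~53]{HR}, which is exactly what you prove directly via Fubini for absolutely summable double families. Your self-contained verification (including the observation that $\lambda_m,\mu_n\to\infty$ makes the exponent set discrete and enumerable in increasing order) is a fine substitute for that citation.
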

\begin{proof}
Note that injectivity of $\varphi$ implies that for every integer $q \Ge 1,$ the cardinality of $\{(m,n) \in \mathbb N \times \mathbb N: \varphi(m,n) = \nu_q\}$ is one, and hence
	\beqn
	 \sum_{(m,n) \in \varphi^{-1}(\{\nu_q\})}a_{m} b_n = a_{m_q}b_{n_q},
	\eeqn
	where $\varphi^{-1}(\nu_q) = (m_q, n_q).$ The desired conclusion is now immediate from \cite[Theorem~53]{HR} (see also the discussion prior to \cite[Theorem~53]{HR}).
\end{proof}

We record some known facts about double Dirichlet series for later use.  
\begin{lemma} \label{D-S-rmk}
Let $f(s, u) = \sum_{m, n =1}^{\infty} a_{m, n} m^{-s} n^{-u}$ be a double Dirichlet series. Then the following statements are valid$:$
	\begin{enumerate}
\item[$(i)$] if $f(s, u)$ is regularly convergent at some $(s_0, u_0) \in \mathbb C^2,$ then it is absolutely convergent  on $\mathbb H_{\Re(s_0)+1} \times \mathbb H_{\Re(u_0)+1},$ 
\item[$(ii)$] if $f$ is regularly convergent at $(s_0, u_0),$ then the series $f$ defines a holomorphic function in $\mathbb H_{\Re(s_0)} \times \mathbb H_{\Re(u_0)},$ 
\item[$(iii)$] if $f$ is regularly convergent and zero on $\mathbb H_{\rho} \times \mathbb H_\rho,$ then $a_{m, n} = 0$ for all integers $m, n \Ge 1.$
\end{enumerate}
	\end{lemma}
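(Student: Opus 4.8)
The plan is to establish $(i)$ first, then deduce $(iii)$ from it together with Proposition~\ref{uds}, and to treat $(ii)$ by a two-variable summation-by-parts argument. For $(i)$, note that regular convergence at $(s_0,u_0)$ includes the convergence of the double series $\sum_{m,n}a_{m,n}m^{-s_0}n^{-u_0}$; a routine argument — using the convergence of its rectangular partial sums together with the convergence of all rows $\sum_m a_{m,n}m^{-s_0}$ and columns $\sum_n a_{m,n}n^{-u_0}$ to dispose of the finitely many boundary indices — shows that the double sequence $\{a_{m,n}m^{-s_0}n^{-u_0}\}_{m,n\Ge1}$ is bounded, say by a constant $C$. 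Then, for $\Re(s)>\Re(s_0)+1$ and $\Re(u)>\Re(u_0)+1$, the factorization $m^{-s}n^{-u}=(m^{-s_0}n^{-u_0})\,m^{-(s-s_0)}n^{-(u-u_0)}$ yields
\[
\sum_{m,n\Ge1}\bigl|a_{m,n}\,m^{-s}n^{-u}\bigr|\;\Le\;C\Bigl(\sum_{m\Ge1}m^{-(\Re(s)-\Re(s_0))}\Bigr)\Bigl(\sum_{n\Ge1}n^{-(\Re(u)-\Re(u_0))}\Bigr)\;<\;\infty,
\]
both one-variable series converging because their exponents exceed $1$. This is the two-variable analogue of the estimate behind Remark~\ref{D-S-rmk-0}.

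For $(ii)$ I would show directly that the double series converges uniformly on every product of angular sectors $\{|\arg(s-s_0)|\Le\frac{\pi}{2}-\delta\}\times\{|\arg(u-u_0)|\Le\frac{\pi}{2}-\delta\}$, $\delta>0$. Writing the general term as $\bigl(a_{m,n}m^{-s_0}n^{-u_0}\bigr)\,m^{-(s-s_0)}n^{-(u-u_0)}$ and exploiting that the amplitude $m^{-(s-s_0)}n^{-(u-u_0)}$ factors as a function of $(m,s)$ times a function of $(n,u)$, one performs Abel/Dirichlet summation by parts in one variable and then in the other, so that the uniform convergence reduces to the classical one-variable Dirichlet estimate applied twice, with the boundedness of the partial sums of $\sum a_{m,n}m^{-s_0}n^{-u_0}$ supplying the required input. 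Since every compact subset of $\mathbb H_{\Re(s_0)}\times\mathbb H_{\Re(u_0)}$ is contained in such a product of sectors, the entire functions $\sum_{m\Le M,\,n\Le N}a_{m,n}m^{-s}n^{-u}$ converge to $f$ uniformly on compact subsets of $\mathbb H_{\Re(s_0)}\times\mathbb H_{\Re(u_0)}$, and hence $f$ is holomorphic there (a locally uniform limit of holomorphic functions of two variables is holomorphic, via the one-variable Weierstrass theorem in each slot and Hartogs' theorem). Alternatively, $(ii)$ may simply be quoted from the classical theory of multiple Dirichlet series.

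For $(iii)$, I would combine $(i)$ with Proposition~\ref{uds}. Since $f$ is regularly convergent at every point of $\mathbb H_\rho\times\mathbb H_\rho$, applying $(i)$ at points $(s_0,u_0)$ with real parts arbitrarily close to $\rho$ shows that $f$ is absolutely convergent on $\mathbb H_{\rho+1}\times\mathbb H_{\rho+1}$. On that region Fubini's theorem gives $f(s,u)=\sum_{m\Ge1}c_m(u)\,m^{-s}$, where $c_m(u):=\sum_{n\Ge1}a_{m,n}n^{-u}$ is a Dirichlet series convergent on $\mathbb H_\rho$ by the very definition of regular convergence. Fixing $u\in\mathbb H_{\rho+1}$, the function $s\mapsto f(s,u)$ vanishes identically on $\mathbb H_{\rho+1}$, so Proposition~\ref{uds} forces $c_m(u)=0$ for every $m\Ge1$; as this holds for all $u\in\mathbb H_{\rho+1}$, a second application of Proposition~\ref{uds}, now to each Dirichlet series $c_m$ in the variable $u$, gives $a_{m,n}=0$ for all integers $m,n\Ge1$.

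The only genuinely technical point is the two-variable summation by parts and the attendant sector estimate in $(ii)$; once the boundedness of $\{a_{m,n}m^{-s_0}n^{-u_0}\}$ from $(i)$ is in hand, parts $(i)$ and $(iii)$ are bookkeeping.
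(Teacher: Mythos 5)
Your proposal is correct and follows essentially the same route as the paper: part $(iii)$ is exactly the paper's argument (part $(i)$ plus Fubini plus two applications of Proposition~\ref{uds}), and for parts $(i)$ and $(ii)$ the paper simply cites \cite[Remark~5]{BCGMS} and \cite[Theorem~1.8]{CGM} for the facts you prove directly (boundedness of $\{a_{m,n}m^{-s_0}n^{-u_0}\}$ plus comparison with a product of convergent one-variable series, and uniform convergence on products of angular sectors via iterated Abel summation). The details you supply are sound, so the only difference is that you reprove the quoted results rather than citing them.
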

	\begin{proof} 
The part (i) is recorded in \cite[Remark~5]{BCGMS}.
To see (ii), note that if $f(s, u)$ 
		is regularly convergent at $(s_0,u_0),$ then 
		 it converges regularly and 
	uniformly throughout the region $$\big\{(s, u) \in \mathbb C^2 : |\arg(s-s_0)| < \frac{\pi}{2} -\delta, ~|\arg(u-u_0)| < \frac{\pi}{2} -\delta \big\}$$ for every positive real number $\delta < \frac{\pi}{2}$ (see \cite[Theorem~1.8]{CGM}). The desired conclusion in (ii) is now immediate.	 Finally, (iii) follows from (i), Fubini's theorem, and two applications of Proposition~\ref{uds}.
\end{proof}

\subsection{Analytic symbols}

One of the objects of interests in this paper is the  Dirichlet series kernel $\kappa_{\mf a}$ with the coefficient matrix $\mf a$ (see Definition~\ref{def-dsk}). If $\kappa_{\mf a}$ is positive semi-definite, then the Moore's theorem (see \cite[Theorem~2.14]{PR}) yields a reproducing kernel Hilbert space $\mathscr H_{\mf a}$ with the reproducing kernel $\kappa_{\mf a}.$ 
In case of diagonal matrix $\mf a,$ these spaces have been studied extensively in the literature in the last two decades in various contexts 
(see, for example, \cite{BCGMS, CGM, Mc-0, MS}). 
The purpose of this paper is to discuss the role of the so-called analytic symbols in the study of $\mathscr H_{\mf a}.$   
The {\it analytic symbol} $A_{n, \mf a}$ of $\mf a=(a_{m, n})_{m, n =j}^{\infty},$ $j \Ge 1$ are the Dirichlet series given by
\beqn
A_{n, \mf a}(s) = \sum_{m=j}^{\infty} a_{m, n} m^{-s}, \quad s \in \mathbb H_{\rho}, ~n \Ge j.
\eeqn
In this paper, we show that the analytic symbols $A_{n, \mf a},$ $n \Ge 1,$ appear in several contexts in the study of the reproducing kernel Hilbert space $\mathscr H_{\mf a}.$ In particular, they play the role of a canonical ``basis" for the reproducing kernel Hilbert space $\mathscr H_{\mf a}.$ We combine this fact with some results from the analytic and transcendental number theory  (see \cite{Ap, HR, Ni, Tu}) 
to study the Dirichlet series kernels invariant under various subgroups of the automorphism group of a right half-plane.

Here is the plan of the paper. 
In Section~\ref{Sec2}, we characterize positive semi-definite Dirichlet series kernels (see Theorem~\ref{char-pdsk}). 
Among various applications, we give sufficient and necessary conditions for a Dirichlet series to be a member of $\mathscr H_{\mf a}$ (see Corollary~\ref{member}). 
We also show that
$\mathscr H_{\mf a}$ consists only of Dirichlet series. Further, the analytic symbols of the coefficient matrix $\mf a$ form a total subset of $\mathscr H_{\mf a}$ (see Theorem~\ref{total}). 
In Section~\ref{Sec3}, we combine the results from Section~2 with the Schur complement and a variant of Weyl's inequality to produce a non-diagonal reproducing kernel Hilbert space for which all but finitely many analytic symbols are Dirichlet polynomials. 
In Section~\ref{4}, we show that there are no quasi-invariant Dirichlet series kernels on the right half-plane unless the associated reproducing kernel Hilbert spaces are of dimension at most one
(see Theorem~\ref{MA-TH}). 
The proof of this result relies on the results obtained in Section~\ref{Sec2} and solution to the Hilbert's seventh problem (see \cite{Ni, Tu}).
As a consequence of Theorem~\ref{MA-TH}, we show that there are no non-constant $\mathrm{Aut}(\mathbb H_\rho)$-invariant Dirichlet series kernels (see Corollary~\ref{no-aut-linear}).
In Section~\ref{5}, we discuss the translation-invariant Dirichlet series kernels and formally introduce the class of $\mathscr T$-homogeneous operators (see Proposition~\ref{prop-t-inv} and Definition~\ref{t-inv-def}). The main result of this section exhibits a family of densely defined $\mathscr T$-homogeneous operators whose adjoints are defined only at the zero vector (see Theorem~\ref{t-inv-thm}).

 \section{Reproducing kernel Hilbert spaces of Dirichlet series} \label{Sec2}


We formally introduce the notion of the Dirichlet series kernel.

 \begin{definition} \label{def-dsk}
 Let $\mathbf a = (a_{m, n})_{m,n = 1}^{\infty}$ be a matrix with complex entries. 
We say that $\kappa_{\mathbf a}$ given by
\begin{equation*}
\kappa_{\mathbf a}(s,u) = \sum_{m, n =1}^{\infty} a_{m, n} m^{-s} n^{-\bar{u}}
\end{equation*} 
is a {\it Dirichlet series kernel} if there exists $\rho \in \mathbb R$ such that the double Dirichlet series $(s, u) \mapsto \kappa_{\mathbf a}(s, \bar{u})$ is regularly convergent on $\mathbb H_{\rho} \times \mathbb H_{\rho}.$
We refer to $\mf a$ as the {\it coefficient matrix} of the Dirichlet series kernel $\kappa_{\mf a}.$
\end{definition}

Let $X$ be a nonempty set and $f : X \times X \rar \mathbb C$ be a kernel function. We say that $f$ is {\it positive semi-definite} if for any finitely many points $x_1, \ldots, x_n \in X,$ the matrix $(f(x_i, x_j))_{i, j=1}^n$ is positive semi-definite, that is, it is self-adjoint with eigenvalues contained in $[0, \infty).$ A matrix $\mathbf a = (a_{m, n})_{m,n = 1}^{\infty}$ with complex entries is said to be {\it formally positive semi-definite} if the kernel function $f(m, n)=a_{m, n},$ $m, n \Ge 1,$ is positive semi-definite.

Consider a positive semi-definite Dirichlet series kernel $\kappa_{\mathbf a} : \mathbb H_{\rho} \times \mathbb H_{\rho} \rar \mathbb C.$ By \cite[Theorem~2.14]{AMY}, there exists a unique complex Hilbert space $\mathscr H_{\mathbf a}$ such that  $\kappa_{\mathbf a, t} \in \mathscr H_{\mathbf a}$ and
\beq 
\notag
\mathscr H_{\mathbf a} &=& \bigvee \{\kappa_{\mathbf a, t} : t \in \mathbb H_\rho\}, \\ \label{rp}
\inp{f}{\kappa_{\mathbf a, t}} &=& f(t), \quad t \in \mathbb H_\rho, \, f \in \mathscr H_{\mf a},
\eeq
where $\vee$ stands for the closed linear span and
$\kappa_{\mathbf a, t}(s)=\kappa_{\mathbf a}(s, t),$ $s, t \in \mathbb H_\rho.$ 
We refer to the space $\mathscr H_{\mathbf a}$ as the {\it reproducing kernel  Hilbert space associated with $\kappa_{\mathbf a}.$}  Sometimes, the inner-product on $\mathscr H_{\mf a}$ will be denoted by $\inp{\cdot}{\cdot}_{\mathscr H_{\mf a}}.$
\begin{remark} \label{rmk-H-k}
We note the following$:$
\begin{enumerate}
\item[$(i)$]  If $\mathbb L$ denotes a countable dense subset of $\mathbb H_\rho,$ then by the reproducing property of $\mathscr H_{\mathbf a}$ $($see \eqref{rp}$),$
 $\mbox{span}_{\mathbb Q}\{\kappa_{\mathbf a, t} : t \in \mathbb L\}$ is a countable dense subset of $\mathscr H_{\mathbf a},$ where $\mbox{span}_{\mathbb Q}$ stands for the $\mathbb Q$-linear span.
 In particular, $\mathscr H_{\mathbf a}$ is separable.
 \item[$(ii)$] 
By \eqref{rp}, the convergence in $\mathscr H_{\mathbf a}$ implies the uniform convergence on every compact subset of $\mathbb H_\rho.$ Since $\kappa_{\mathbf a, t}$ is holomorphic on $\mathbb H_\rho$ $($consult Lemma~\ref{D-S-rmk}$(ii)),$ every function $f \in \mathscr H_{\mathbf a},$ being a limit of a sequence in $\mbox{span}_{\mathbb Q}\{\kappa_{\mathbf a, t} : t \in \mathbb L\},$ 
is holomorphic on $\mathbb H_\rho.$
 \end{enumerate}
 \end{remark}

The following theorem characterizes positive semi-definite Dirichlet series kernels (cf. \cite[Lemma~4.1]{CS} and \cite[Lemma~20]{MS}). 
\begin{theorem} \label{char-pdsk}
	Let $\mathbf a = (a_{m, n})_{m,n = 1}^{\infty}$ be a matrix with complex entries and let $\kappa_{\mathbf a}$ be a Dirichlet series kernel. Then the following statements are equivalent$:$ 
	\begin{enumerate}
		\item[$(i)$] the Dirichlet series kernel $\kappa_{\mf a}$ is positive semi-definite,
		\item[$(ii)$] the coefficient matrix $\mathbf a$ is formally positive semi-definite.
	\end{enumerate}
	If $(\text{i})$ holds, then for every positive integer $n,$ the analytic symbol
	$A_{n, \mf a}$
	belongs to $\mathscr H_{\mf a}$ and satisfies
	\beq
	\label{eq-gram}
	a_{m, n} = \inp{A_{n, \mf a}}{A_{m, \mf a}}_{\mathscr H_{\mf a}},  \quad m, n \Ge 1.
	\eeq
\end{theorem}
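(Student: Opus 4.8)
My plan is to prove $(ii)\Rightarrow(i)$ by a direct truncation argument, and then to deduce the membership $A_{n,\mf a}\in\mathscr{H}_{\mf a}$, the identity \eqref{eq-gram}, and the remaining implication $(i)\Rightarrow(ii)$ all at once, by realizing each analytic symbol as a norm limit of finite linear combinations of the kernel sections $\kappa_{\mf a,t}$. For $(ii)\Rightarrow(i)$, fix points $t_1,\dots,t_k\in\mathbb H_\rho$ and scalars $c_1,\dots,c_k$. Since $(s,u)\mapsto\kappa_{\mf a}(s,\bar u)$ is regularly convergent, each $\kappa_{\mf a}(t_i,t_j)$ equals $\lim_N\sum_{m,n\Le N}a_{m,n}m^{-t_i}n^{-\overline{t_j}}$; interchanging the \emph{finite} sum $\sum_{i,j}c_i\overline{c_j}$ with this limit and putting $d_m=\sum_i c_i m^{-t_i}$ gives $\sum_{i,j}c_i\overline{c_j}\kappa_{\mf a}(t_i,t_j)=\lim_N\sum_{m,n\Le N}a_{m,n}d_m\overline{d_n}\Ge0$, since each truncated quadratic form is nonnegative by formal positive semidefiniteness of $\mf a$.

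Assume now $(i)$, so that $\mathscr{H}_{\mf a}$ exists, and observe that it suffices to prove $A_{n,\mf a}\in\mathscr{H}_{\mf a}$ with $a_{m,n}=\inp{A_{n,\mf a}}{A_{m,\mf a}}_{\mathscr{H}_{\mf a}}$: then $(a_{m,n})$ is the Gram matrix of the family $\{A_{n,\mf a}\}_{n\Ge1}$ in $\mathscr{H}_{\mf a}$, hence self-adjoint with nonnegative eigenvalues, i.e.\ formally positive semi-definite, giving $(i)\Rightarrow(ii)$ as well. Regular convergence of $\kappa_{\mf a}$ makes every column series $\sum_m a_{m,n}m^{-s}$ convergent on $\mathbb H_\rho$, so $A_{n,\mf a}\in\mathrm{Hol}(\mathbb H_\rho)$ and $\kappa_{\mf a,t}(s)=\sum_{n}n^{-\bar t}A_{n,\mf a}(s)$ on $\mathbb H_\rho$; moreover $\kappa_{\mf a}$ converges absolutely on $\mathbb H_{\rho+1}\times\mathbb H_{\rho+1}$ by Lemma~\ref{D-S-rmk}$(i)$. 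Fix $\sigma>\rho+1$ and $n\Ge1$, and set $f^{(n)}_T=\tfrac{n^{\sigma}}{2T}\int_{-T}^{T}\kappa_{\mf a,\sigma+i\tau}\,e^{-i\tau\log n}\,d\tau$ for $T>0$. Absolute convergence on $\mathbb H_{\rho+1}\times\mathbb H_{\rho+1}$ makes $\tau\mapsto\kappa_{\mf a,\sigma+i\tau}$ bounded and norm-continuous as an $\mathscr{H}_{\mf a}$-valued map, so $f^{(n)}_T$ exists as a Bochner integral — equivalently, is a norm limit of Riemann sums from $\mathrm{span}\{\kappa_{\mf a,t}:t\in\mathbb H_\rho\}$ — and hence $f^{(n)}_T\in\mathscr{H}_{\mf a}$.

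The argument then runs on two levels. Expanding $\inp{\kappa_{\mf a,\sigma+i\tau}}{\kappa_{\mf a,\sigma+i\tau'}}=\kappa_{\mf a}(\sigma+i\tau',\sigma+i\tau)$ as an absolutely convergent double Dirichlet series, exchanging the sum with the two integrals, and invoking the Dirichlet mean-value limit $\tfrac{1}{2T}\int_{-T}^{T}e^{i\tau\log(k/l)}\,d\tau\rar\delta_{k,l}$ (dominated convergence being available from absolute convergence), one finds $\inp{f^{(n)}_T}{f^{(m)}_{T'}}_{\mathscr{H}_{\mf a}}\rar a_{m,n}$ as $T,T'\rar\infty$; in particular $(f^{(n)}_T)_T$ is norm-Cauchy and converges in $\mathscr{H}_{\mf a}$ to some $h_n$. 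Running the same mean-value computation pointwise — legitimate for $\Re(s)>\rho+1$, where $\sum_l|A_{l,\mf a}(s)|\,l^{-\sigma}<\infty$ — gives $f^{(n)}_T(s)\rar A_{n,\mf a}(s)$ on $\mathbb H_{\rho+1}$. Since norm convergence in $\mathscr{H}_{\mf a}$ forces pointwise convergence and both $h_n$ and $A_{n,\mf a}$ are holomorphic on $\mathbb H_\rho$ (Remark~\ref{rmk-H-k}$(ii)$), they agree on $\mathbb H_{\rho+1}$ and hence on $\mathbb H_\rho$, so $A_{n,\mf a}=h_n\in\mathscr{H}_{\mf a}$. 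Feeding this back, $\inp{A_{n,\mf a}}{A_{m,\mf a}}_{\mathscr{H}_{\mf a}}=\inp{h_n}{h_m}_{\mathscr{H}_{\mf a}}=\lim_{T,T'\rar\infty}\inp{f^{(n)}_T}{f^{(m)}_{T'}}_{\mathscr{H}_{\mf a}}=a_{m,n}$, which is \eqref{eq-gram}.

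The main obstacle is precisely this second half: positive semidefiniteness of the \emph{function} $\kappa_{\mf a}$ gives no direct access to the entries $a_{m,n}$, and recovering the $n$-th Dirichlet coefficient of $\kappa_{\mf a,t}$ by a mean-value average of kernel sections — which then automatically lands in $\mathscr{H}_{\mf a}$ — seems to be the natural way around this. The technical points needing care are: the Bochner integral, justified through the norm-continuity of $\tau\mapsto\kappa_{\mf a,\sigma+i\tau}$ that comes from absolute convergence on $\mathbb H_{\rho+1}\times\mathbb H_{\rho+1}$; the restriction of the pointwise identity to $\mathbb H_{\rho+1}$ and its transfer to $\mathbb H_\rho$ by the identity theorem; and the elementary but essential remark that two elements of $\mathscr{H}_{\mf a}$ that agree as functions on $\mathbb H_\rho$ coincide.
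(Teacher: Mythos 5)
Your proof is correct, and it takes a genuinely different route from the paper for the hard half of the statement (that each analytic symbol $A_{n,\mf a}$ lies in $\mathscr H_{\mf a}$ and that $\mf a$ is its Gram matrix). The paper realizes $A_{1,\mf a}$ as the norm limit of $\kappa_{\mf a,p}$ as the integer $p\to\infty$, then passes to $A_{k+1,\mf a}$ by a strong induction: it forms $g_p=\kappa_{\mf a,p}-\sum_{l=1}^{k}l^{-p}A_{l,\mf a}$ and shows $(k+1)^pg_p$ is Cauchy, with all the Cauchy estimates controlled by the elementary tail bounds of the auxiliary lemma (which is the double-series analogue of \cite[Lemma~11.1]{Ap}). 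You instead recover each $A_{n,\mf a}$ in one stroke by a Bohr mean-value average, $f^{(n)}_T=\tfrac{n^{\sigma}}{2T}\int_{-T}^{T}\kappa_{\mf a,\sigma+i\tau}e^{-i\tau\log n}\,d\tau$, taken as a Bochner integral so that it lands automatically in $\mathscr H_{\mf a}$, and then verify both the Gram formula and norm-Cauchy-ness by exchanging the mean-value limits with an absolutely convergent double series. The paper's route is more elementary (no vector-valued integration, only sequential limits along integers), at the price of an induction whose bookkeeping is somewhat intricate; your route treats all indices symmetrically and avoids the induction entirely, at the price of invoking Bochner integration and the strong continuity of $\tau\mapsto\kappa_{\mf a,\sigma+i\tau}$. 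Both proofs exploit the same underlying mechanism — that Dirichlet coefficients can be extracted by a limiting process that commutes with the Hilbert space structure — and both first pass to $\mathbb H_{\rho+1}$ where absolute convergence is available, returning to $\mathbb H_\rho$ via the identity theorem. Two small points worth tightening: the domination you need for the pointwise interchange is the full double sum $\sum_{k,l}|a_{k,l}|k^{-\Re(s)}l^{-\sigma}<\infty$, not merely $\sum_l|A_{l,\mf a}(s)|l^{-\sigma}$; and your computation of $\inp{f^{(n)}_T}{f^{(m)}_{T'}}$ quietly uses the Hermitian symmetry $\overline{\kappa_{\mf a}(s,u)}=\kappa_{\mf a}(u,s)$, which is available since $(i)$ is assumed (the paper isolates this as Lemma~\ref{self-ad}).
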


The non-trivial part in the proof of Theorem~\ref{char-pdsk} is the fact that the analytic symbols $A_{n, \mf a},$ $n \Ge 1,$ belong to $\mathscr H_{\mf a}.$ Given this, it is not difficult to obtain \eqref{eq-gram} from Schnee formula (see \cite[Theorem~11.17]{Ap}). Both these facts can be derived simultaneously by a strong induction argument and this occupies the major portion of the proof of Theorem~\ref{char-pdsk}
(see Lemma~\ref{grammian}).  
We begin with the following simple fact, which is a consequence of the uniqueness of the Dirichlet series.

\begin{lemma} \label{self-ad}
	Assume that $\kappa_{\mathbf a} : \mathbb H_{\rho} \times \mathbb H_{\rho} \rar \mathbb C$ is a positive semi-definite Dirichlet series kernel. Then $\mathbf a$ is a self-adjoint matrix.
\end{lemma}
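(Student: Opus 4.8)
The plan is to exploit the reproducing property together with the uniqueness of the Dirichlet series (Proposition~\ref{uds}) to transfer the self-adjointness of the scalar Gram-type data to the matrix $\mf a$. First I would recall that since $\kappa_{\mf a}$ is positive semi-definite, for any finitely many points $t_1,\dots,t_k \in \mathbb H_\rho$ the matrix $(\kappa_{\mf a}(t_i, t_j))_{i,j=1}^k$ is self-adjoint, i.e.\ $\kappa_{\mf a}(t_i, t_j) = \overline{\kappa_{\mf a}(t_j, t_i)}$. Taking $k=2$ and letting the two points range over all of $\mathbb H_\rho \times \mathbb H_\rho$, this says
\[
\kappa_{\mf a}(s, u) = \overline{\kappa_{\mf a}(u, s)}, \qquad s, u \in \mathbb H_\rho.
\]
Unwinding the definition $\kappa_{\mf a}(s, u) = \sum_{m,n} a_{m,n} m^{-s} n^{-\bar u}$ and writing the conjugate of the right-hand side, one gets $\sum_{m,n} a_{m,n} m^{-s} n^{-\bar u} = \sum_{m,n} \overline{a_{m,n}} m^{-\bar s}\, n^{-u}$; after relabeling the summation indices $m \leftrightarrow n$ on the right, the right-hand side becomes $\sum_{m,n} \overline{a_{n,m}}\, m^{-s} n^{-\bar u}$.

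Next I would package both sides as a single regularly convergent double Dirichlet series in the variables $s$ and $\bar u$. By hypothesis $(s,u) \mapsto \kappa_{\mf a}(s, \bar u)$ is regularly convergent on $\mathbb H_\rho \times \mathbb H_\rho$; the conjugated/relabeled series has coefficient matrix $(\overline{a_{n,m}})_{m,n}$, which is regularly convergent on the same region (regular convergence is unaffected by conjugating coefficients and by the transpose relabeling, since the defining convergence conditions are symmetric in $m$ and $n$). Hence the difference
\[
\sum_{m,n=1}^{\infty} \big(a_{m,n} - \overline{a_{n,m}}\big)\, m^{-s}\, n^{-u}
\]
is a double Dirichlet series that is regularly convergent and identically zero on $\mathbb H_\rho \times \mathbb H_\rho$. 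By Lemma~\ref{D-S-rmk}(iii) all its coefficients vanish, i.e.\ $a_{m,n} = \overline{a_{n,m}}$ for all $m, n \Ge 1$, which is precisely the assertion that $\mf a$ is self-adjoint.

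I do not expect a serious obstacle here; the only point requiring a little care is the bookkeeping when passing from $\kappa_{\mf a}(s,u)$ (a Dirichlet series in $s$ and $\bar u$) to an honest double Dirichlet series in two independent complex variables, so that Lemma~\ref{D-S-rmk}(iii) applies verbatim — one should substitute $u \mapsto \bar u$ and observe that $s, u$ range over $\mathbb H_\rho \times \mathbb H_\rho$ exactly when $s, \bar u$ do. One could alternatively avoid the transpose relabeling and invoke the uniqueness result coordinatewise: fix $u$, apply Proposition~\ref{uds} in the variable $s$ to conclude $\sum_n a_{m,n} n^{-\bar u} = \sum_n \overline{a_{n,m}} n^{-\bar u}$ for each $m$, then apply Proposition~\ref{uds} again in $\bar u$; but the one-shot application of Lemma~\ref{D-S-rmk}(iii) is cleaner.
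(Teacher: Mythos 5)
Your argument is correct and matches the paper's proof: both derive the Hermitian symmetry $\kappa_{\mf a}(s,u)=\overline{\kappa_{\mf a}(u,s)}$ from positive semi-definiteness, reorganize the conjugated series to form the double Dirichlet series with coefficients $a_{m,n}-\overline{a_{n,m}}$, and invoke Lemma~\ref{D-S-rmk}(iii) to conclude these vanish. Your extra bookkeeping about regular convergence of the relabeled series is a helpful (if slightly more verbose) spelling-out of what the paper leaves implicit.
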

\begin{proof} 
	Since  $\kappa_{\mf a}(s, u) = \overline{\kappa_{\mf a}(u, s)},$ $s, u \in \mathbb{H}_\rho,$ we may conclude that 
	\beqn
	\eta_{\mf a}(s, u):=\sum_{m, n = 1}^{\infty}  (a_{m,n}-\overline{a_{n,m}}) m^{-s} n^{-\overline{u}} =0,  \quad s, u \in \mathbb{H}_{\rho}.
	\eeqn
	Applying Lemma~\ref{D-S-rmk}(iii) to the double Dirichlet series $(s, u) \mapsto \eta_{\mf a}(s, \overline{u})$ completes the proof. 
\end{proof}

We also need a counterpart of \cite[Lemma~11.1]{Ap} for double Dirichlet series.

\begin{lemma} 
	For positive integers $k, l,$ consider the double Dirichlet series 
	\beqn
	f(s,u) = \sum_{m = k}^{\infty}\sum_{n=l}^{\infty} a_{m,n} m^{-s}n^{-u}.
	\eeqn 
	Assume that $f$ is  absolutely convergent on $\mathbb H_{\rho_{1}} \times \mathbb H_{\rho_{2}}$ for some $\rho_1, \rho_2 \in \mathbb R.$  If $r > \max\{\rho_{1}, \rho_{2}\},$ then there exist positive constants $C_{r}$ and $D_r$ such that for every $s, u \in \mathbb H_{r},$ we have
	\beq \label{vanish-infty-1}
	& & \big|k^{s}l^uf(s,u) - a_{k,l}\big| \\   &\Le &  \notag
	C_r\Big(\frac{l^{\Re(u)}}{(l+1)^{\Re(u)-r}} + \frac{k^{\Re(s)}}{(k+1)^{\Re(s)-r}} + \frac{k^{\Re(s)} l^{\Re(u)}}{(k+1)^{\Re(s)-r}(l+1)^{\Re(u)-r}}\Big), \\
	\label{vanish-infty-2}
	& & \Big|l^{u}f(s,u) - \sum_{m=k}^{\infty} a_{m, l} m^{-s}\Big| \\  & \Le  &  \notag
	D_r\Big(\frac{l^{\Re(u)}}{k^{\Re(s)}(l+1)^{\Re(u)-r}} + \frac{ l^{\Re(u)}}{(k+1)^{\Re(s)-r}(l+1)^{\Re(u)-r}}\Big). 
	\eeq
\end{lemma}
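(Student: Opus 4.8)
The plan is to run, once in each of the two variables, the elementary estimate underlying the proof of \cite[Lemma~11.1]{Ap}. The single global ingredient is that, since $r > \max\{\rho_1, \rho_2\}$, the point $(r, r)$ lies in the region $\mathbb H_{\rho_1} \times \mathbb H_{\rho_2}$ of absolute convergence of $f$, so that
\[
M_r := \sum_{m = k}^{\infty} \sum_{n = l}^{\infty} |a_{m, n}|\, m^{-r}\, n^{-r} < \infty .
\]
Keeping in this double sum only the terms with $m = k$, respectively only those with $n = l$, yields $\sum_{n \Ge l}|a_{k, n}|\, n^{-r} \Le k^{r} M_r$ and $\sum_{m \Ge k}|a_{m, l}|\, m^{-r} \Le l^{r} M_r$. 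We shall also use repeatedly that, for $s \in \mathbb H_r$ and an integer $m \Ge k + 1$, one has $m^{-\Re(s)} \Le m^{-r}(k+1)^{-(\Re(s) - r)}$ because $\Re(s) - r > 0$ (and the analogue in the variable $u$, with $l, u$ in place of $k, s$), while $k^{-\Re(s)} \Le k^{-r}$ since $k \Ge 1$.

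For \eqref{vanish-infty-1}, factor the constants $k^{s}$ and $l^{u}$ into the absolutely convergent series term by term and peel off the $(m, n) = (k, l)$ summand:
\[
k^{s} l^{u} f(s, u) - a_{k, l} \;=\; \sum_{\substack{m \Ge k,\ n \Ge l \\ (m, n) \ne (k, l)}} a_{m, n}\, (k/m)^{s}\, (l/n)^{u}.
\]
Split the index set into the three pieces $\{m = k,\ n \Ge l + 1\}$, $\{m \Ge k + 1,\ n = l\}$ and $\{m \Ge k + 1,\ n \Ge l + 1\}$; these will produce, in order, the three summands on the right of \eqref{vanish-infty-1}. On each piece pass to absolute values (so $|(k/m)^{s}| = (k/m)^{\Re(s)}$ and $|(l/n)^{u}| = (l/n)^{\Re(u)}$), pull the factors $k^{\Re(s)}$ and $l^{\Re(u)}$ out front, replace $m^{-\Re(s)}$ by $m^{-r}(k+1)^{-(\Re(s) - r)}$ on the pieces with $m \Ge k + 1$ and $n^{-\Re(u)}$ by $n^{-r}(l+1)^{-(\Re(u) - r)}$ on the pieces with $n \Ge l + 1$, and bound the remaining sum of $|a_{m, n}|\, m^{-r} n^{-r}$ by $M_r$, invoking the two displayed inequalities of the previous paragraph for the first two pieces. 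Taking $C_r := \max\{k^{r}, l^{r}\}\, M_r$ then gives \eqref{vanish-infty-1}.

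For \eqref{vanish-infty-2}, the series to be compared with $l^{u} f(s, u)$ is the $n = l$ slice $\sum_{m \Ge k} a_{m, l}\, m^{-s}$, i.e. $b_{m, l} = a_{m, l}$. Separating the $n = l$ term of the inner series,
\[
l^{u} f(s, u) = \sum_{m = k}^{\infty} m^{-s}\Big( a_{m, l} + \sum_{n = l + 1}^{\infty} a_{m, n}\, (l/n)^{u} \Big),
\]
so $l^{u} f(s, u) - \sum_{m \Ge k} a_{m, l}\, m^{-s} = \sum_{m \Ge k} m^{-s} \sum_{n \Ge l + 1} a_{m, n}\, (l/n)^{u}$. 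Passing to absolute values, pulling out $l^{\Re(u)}$, replacing $n^{-\Re(u)}$ by $n^{-r}(l + 1)^{-(\Re(u) - r)}$ for $n \Ge l + 1$, and then splitting the outer sum into the single term $m = k$ (where $m^{-\Re(s)} \Le k^{-r}$) and the tail $m \Ge k + 1$ (where $m^{-\Re(s)} \Le m^{-r}(k + 1)^{-(\Re(s) - r)}$) leads to the bound $M_r\, \frac{l^{\Re(u)}}{(l + 1)^{\Re(u) - r}}\big(1 + (k + 1)^{-(\Re(s) - r)}\big)$; since $k^{\Re(s)} \Ge 1$, this is at most the right-hand side of \eqref{vanish-infty-2} with $D_r := M_r$.

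I do not expect a genuine obstacle: the argument is a twofold iteration of a geometric-series estimate. The only points requiring care are the legitimacy of the term-by-term manipulation and rearrangement of the double series — licensed by its absolute convergence at $(r, r)$ — and the bookkeeping that keeps $C_r$ and $D_r$ free of $s$ and $u$; they are allowed to depend on $r$ and on the fixed data $k$, $l$, $f$, and indeed emerge as explicit multiples of $M_r$.
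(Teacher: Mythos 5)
Your argument is correct and is essentially the paper's own proof: the same three-way splitting of the index set $\{(m,n)\neq(k,l)\}$, the same replacement $m^{-\Re(s)}\Le m^{-r}(k+1)^{-(\Re(s)-r)}$ for $m\Ge k+1$ (and its analogue in $u$), and the same appeal to absolute convergence at $(r,r)$ to finish; the paper writes out only \eqref{vanish-infty-1} and leaves \eqref{vanish-infty-2} "to the reader," which you carry out correctly with $b_{m,l}=a_{m,l}$. The one microscopic caveat is that your explicit constants $C_r=\max\{k^r,l^r\}M_r$ and $D_r=M_r$ implicitly assume $r\Ge 0$ (so that $\max\{k^r,l^r\}\Ge 1$ and $k^{\Re(s)}\Ge 1$); since $r$ may be negative, enlarge them to, say, $C_r=\max\{k^r,l^r,1\}M_r$ and $D_r=\max\{1,k^{-r}\}M_r$, which changes nothing in the argument.
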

\begin{proof}
	Let $r > \max\{\rho_{1}, \rho_{2}\}.$ Note that for any $s, u \in \mathbb H_r,$
	\allowdisplaybreaks
	\beqn 
	&&	|k^{s}l^u f(s,u) - a_{k,l}| 
	\\ & \Le & 
	k^{\Re(s)}l^{\Re(u)} \sum_{\substack{m \Ge k, n \Ge l \\ (m,n) \neq (k,l)}} |a_{m,n}| m^{-\Re(s)} n^{-\Re(u)}
	\\ &=& l^{\Re(u)}\sum_{n \Ge l+1} |a_{k,n}| n^{-\Re(u)} + k^{\Re(s)} \sum_{m \Ge k+1} |a_{m,l}| m^{-\Re(s)} 
	\\ &+& k^{\Re(s)}l^{\Re(u)}  \sum_{m \Ge k+1, n \Ge l+1} |a_{m,n}| m^{-\Re(s)} n^{-\Re(u)}
	\\ &\Le& \frac{l^{\Re(u)}}{(l+1)^{\Re(u)-r}}\sum_{n \Ge l+1} |a_{k,n}| n^{-r} + \frac{k^{\Re(s)}}{(k+1)^{\Re(s)-r}}\sum_{m \Ge k+1} |a_{m,l}| m^{-r}
	\\ &+& \frac{k^{\Re(s)} l^{\Re(u)}}{(k+1)^{\Re(s)-r}(l+1)^{\Re(u)-r}} \sum_{m \Ge k+1, n \Ge l+1} |a_{m,n}| m^{-r} n^{-r}.
	\eeqn
	Since $f$ is absolutely convergent, all the series appearing in the last inequality are convergent, and hence we obtain the constant $C_r$ satisfying \eqref{vanish-infty-1}. To see the estimate \eqref{vanish-infty-2}, note that for any $s, u \in \mathbb H_r,$
	\allowdisplaybreaks
	\beqn 
	&&	|l^{u}f(s,u) - \sum_{m=k}^{\infty} a_{m, l} m^{-s}| 
	\\ & \Le & 
	l^{\Re(u)} \sum_{\substack{m \Ge k, n \Ge l+1 }} |a_{m,n}| m^{-\Re(s)} n^{-\Re(u)}
	\\ &=& \frac{l^{\Re(u)}}{k^{\Re(s)}}\sum_{n \Ge l+1} |a_{k,n}| n^{-\Re(u)} + l^{\Re(u)}  \sum_{m \Ge k+1, n \Ge l+1} |a_{m,n}| m^{-\Re(s)} n^{-\Re(u)}
	\\ &\Le& \frac{l^{\Re(u)}}{k^{\Re(s)}(l+1)^{\Re(u)-r}}\sum_{n \Ge l+1} |a_{k,n}| n^{-r}\\
	 &+& \frac{l^{\Re(u)}}{(k+1)^{\Re(s)-r}(l+1)^{\Re(u)-r}} \sum_{m \Ge k+1, n \Ge l+1} |a_{m,n}| m^{-r} n^{-r}.
	\eeqn
	Now one may argue as in the previous estimate to obtain the constant $D_r$ satisfying \eqref{vanish-infty-2}. This completes the proof.
\end{proof}

The following lemma ensures that the coefficient matrix $\mf a$ of a positive semi-definite Dirichlet series kernel $\kappa_{\mf a}$ can be expressed as a Grammian of the sequence of analytic symbols $A_{n, \mf a},$ $n \Ge 1.$
\begin{lemma} \label{grammian}
	Let $\rho$ be a real number and 
	let $\mathscr H_{\mathbf a}$ be the reproducing kernel Hilbert space associated with the positive semi-definite Dirichlet series kernel $\kappa_{\mathbf a} : \mathbb{H}_\rho \times \mathbb{H}_\rho \rar \mathbb C.$ The following statements are valid$:$
	\begin{enumerate}
		\item[$(i)$] for every integer $n \Ge 1,$ the analytic symbol
		$A_{n, \mf a}$ belongs to $\mathscr H_{\mf a},$ 
		\item[$(ii)$] for every integer $m, n \Ge 1,$ 
		$a_{m, n} = \inp{A_{n, \mf a}}{A_{m, \mf a}}_{\mathscr H_{\mf a}}.$
	\end{enumerate}
\end{lemma}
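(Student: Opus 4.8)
The plan is to prove both statements simultaneously by strong induction on $n$ (and within each $n$, on $m$), using the reproducing kernel machinery from \eqref{rp} together with the asymptotic estimates \eqref{vanish-infty-1}--\eqref{vanish-infty-2}. The key observation is that $\kappa_{\mathbf a, t}(s) = \kappa_{\mathbf a}(s, t) = \sum_{m,n} a_{m,n} m^{-s} n^{-\bar t} = \sum_{m} \big(\sum_n a_{m,n} n^{-\bar t}\big) m^{-s}$, so by uniqueness of the Dirichlet series (Proposition~\ref{uds}), the coefficient of $m^{-s}$ in $\kappa_{\mathbf a, t}$ is $\sum_n a_{m,n} n^{-\bar t}$, which is $\overline{A_{m, \mathbf a}(u)}$ with $u = t$. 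Thus the functions $\kappa_{\mathbf a, t}$ already encode the analytic symbols; the task is to extract $A_{1, \mathbf a}$ as an honest limit of elements of $\mathscr H_{\mathbf a}$.

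First I would set up the induction. For the base case $n=1$: I would show $A_{1, \mathbf a} \in \mathscr H_{\mathbf a}$ by exhibiting it as a norm-limit of scaled reproducing kernels. Specifically, consider $u^{s} \kappa_{\mathbf a, \bar u}(s)$ for $u$ real and large; by \eqref{vanish-infty-2} applied with $k=l=1$ (so $a_{1,1}$ and the first analytic symbol appear), $u^{s}\kappa_{\mathbf a}(s,\bar u) = u^{s} \sum_{m,n} a_{m,n} m^{-s} n^{-u} \to \sum_{m} a_{m,1} m^{-s} = A_{1, \mathbf a}(s)$ as $u \to \infty$, with explicit decay of the error. I would upgrade this pointwise convergence to convergence in $\mathscr H_{\mathbf a}$: the net $u^s \kappa_{\mathbf a, \bar u}$ lies in $\mathscr H_{\mathbf a}$, and a Cauchy estimate in the Hilbert norm — computed using $\inp{\kappa_{\mathbf a, t}}{\kappa_{\mathbf a, t'}} = \kappa_{\mathbf a}(t, t')$ and again \eqref{vanish-infty-1}--\eqref{vanish-infty-2} to control $\| u^s \kappa_{\mathbf a, \bar u} - v^s \kappa_{\mathbf a, \bar v}\|^2 = u^{s}\bar u^{?}\cdots$ (expanding the inner product into a double Dirichlet series evaluated at these points) — shows it is norm-Cauchy as $u \to \infty$. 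Hence the limit $A_{1, \mathbf a}$ lies in $\mathscr H_{\mathbf a}$, and by the reproducing property $\inp{A_{1,\mathbf a}}{\kappa_{\mathbf a, t}} = A_{1,\mathbf a}(t) = \overline{\sum_m a_{m,1} t^{-\bar{?}}}$; comparing coefficients via Proposition~\ref{uds} then yields $a_{m,1} = \inp{A_{1,\mathbf a}}{A_{m,\mathbf a}}$ once we know $A_{m,\mathbf a} \in \mathscr H_{\mathbf a}$ — but that is handled in the inductive step, so for $n=1$ I would instead directly read off $a_{m,1} = \inp{A_{1,\mathbf a}}{\kappa_{\mathbf a, t}}$-type identities after first establishing membership of all $A_{m,\mathbf a}$.

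For the inductive step, suppose $A_{j, \mathbf a} \in \mathscr H_{\mathbf a}$ and $a_{i,j} = \inp{A_{j,\mathbf a}}{A_{i,\mathbf a}}$ for all $i$ and all $j < n$. To produce $A_{n,\mathbf a}$, I would consider the ``reduced'' kernel obtained by subtracting off the contributions of the first $n-1$ analytic symbols: form $g_u(s) := \kappa_{\mathbf a, \bar u}(s) - \sum_{j=1}^{n-1} \overline{A_{j,\mathbf a}(u)}\, A_{j,\mathbf a}(s)$, which lies in $\mathscr H_{\mathbf a}$ by the induction hypothesis, and whose Dirichlet expansion starts at a shifted index in the $u$-variable so that $n^{u} g_u(s) \to A_{n,\mathbf a}(s)$ as $u \to \infty$ (real), again with quantitative control from the analogue of \eqref{vanish-infty-2} for the tail series. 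The same norm-Cauchy argument then places $A_{n,\mathbf a}$ in $\mathscr H_{\mathbf a}$. Finally, the Grammian identity $a_{m,n} = \inp{A_{n,\mathbf a}}{A_{m,\mathbf a}}$ for all $m$ follows by taking inner products of the limiting relation with $\kappa_{\mathbf a, t}$, using continuity of the inner product, the reproducing property, and uniqueness of the Dirichlet series (this is essentially the Schnee-formula computation alluded to after Theorem~\ref{char-pdsk}).

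The main obstacle I anticipate is the passage from pointwise/compact convergence to norm convergence in $\mathscr H_{\mathbf a}$: one must show the nets $u^s \kappa_{\mathbf a,\bar u}$ (resp. $n^u g_u$) are Cauchy in the Hilbert norm, not merely bounded. This requires expanding $\|u^s\kappa_{\mathbf a,\bar u} - v^s \kappa_{\mathbf a,\bar v}\|^2$ as a combination of values of the double Dirichlet series $\kappa_{\mathbf a}$ at the four points $(\bar u,\bar u),(\bar u,\bar v),(\bar v,\bar u),(\bar v,\bar v)$ scaled by $u^s, v^s$ etc., and then showing each such scaled value converges to $a_{1,1}$ (resp. the relevant coefficient) as $u, v \to \infty$ — which is exactly the content of the estimates \eqref{vanish-infty-1}--\eqref{vanish-infty-2}, applied in both variables. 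Bookkeeping the index shifts in the inductive step (ensuring the subtraction of the first $n-1$ symbols really clears the first $n-1$ terms of the $u$-Dirichlet expansion, which uses the induction hypothesis $a_{m,j} = \inp{A_{j,\mathbf a}}{A_{m,\mathbf a}}$ to identify coefficients) is the other delicate point, but it is routine given the uniqueness theorem.
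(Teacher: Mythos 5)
Your overall strategy matches the paper's: a strong induction that extracts each analytic symbol $A_{n,\mathbf a}$ as a norm-limit of suitably rescaled members of $\mathscr H_{\mathbf a}$, with the Cauchy estimate in the Hilbert norm controlled by the double-Dirichlet vanish-at-infinity bounds \eqref{vanish-infty-1}--\eqref{vanish-infty-2} and the Grammian identities recovered from the Schnee-type limit. In the base case you write ``$u^s\kappa_{\mathbf a,\bar u}(s)$''; that scaling factor should not be there (with $k=l=1$ the scaling in \eqref{vanish-infty-2} is $1^u=1$), but this is evidently a slip rather than a conceptual issue, and the paper indeed takes the unscaled sequence $\kappa_{\mathbf a,p}$ with $p\in\mathbb N$, $p\to\infty$.

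The genuine gap is in the inductive step. You define the reduced kernel as
$g_u(s)=\kappa_{\mathbf a,\bar u}(s)-\sum_{j=1}^{n-1}\overline{A_{j,\mathbf a}(u)}\,A_{j,\mathbf a}(s)$ and claim that its Dirichlet expansion in the $u$-variable starts at index $n$. It does not. Since $\kappa_{\mathbf a,u}(s)=\sum_{l\geqslant 1}A_{l,\mathbf a}(s)\,l^{-u}$ for real $u$, the coefficient of $A_{j,\mathbf a}(s)$ in $\kappa_{\mathbf a,u}(s)$ is $j^{-u}$, not $\overline{A_{j,\mathbf a}(u)}=\sum_m a_{j,m}m^{-u}$; these agree only in the trivial case $\mathbf a=I$. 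With your choice the residual $\sum_{j<n}A_{j,\mathbf a}(s)\big(j^{-u}-\overline{A_{j,\mathbf a}(u)}\big)$ does not vanish, and after multiplying by $n^u$ the terms $(n/j)^u(1-a_{j,j})$ and $(n/m)^u\,\overline{a_{m,j}}$ for $m<n$ blow up, so $n^u g_u$ diverges and the limit $A_{n,\mathbf a}$ is never reached. The correct subtraction is the genuine truncation of that $u$-expansion, namely $g_p=\kappa_{\mathbf a,p}-\sum_{l=1}^{n-1}l^{-p}A_{l,\mathbf a}$ (so $g_p=\sum_{l\geqslant n}A_{l,\mathbf a}\,l^{-p}$), for which $n^p g_p\to A_{n,\mathbf a}$ and the norm-Cauchy computation via \eqref{vanish-infty-1} closes. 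Everything else in your outline — membership of $g_p$ in $\mathscr H_{\mathbf a}$ by the induction hypothesis, the Cauchy estimate expanded into four double-series values, identification of the limit via pointwise convergence and \eqref{vanish-infty-2} plus the identity theorem, and the Grammian identity via the reproducing property and the Apostol limit lemma — is sound once the subtraction is fixed.
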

\begin{proof} 
	We prove (i) and (ii) by a strong induction on $k \Ge 1$ with the following induction hypothesis:
	\beq \label{ind-hypo}
	A_{n, \mf a} \in \mathscr H_{\mf a}, ~n=1, \ldots, k. \quad a_{m, n} = \inp{A_{n, \mf a}}{A_{m, \mf a}},  ~ m, n = 1, \ldots, k.
	\eeq
	We divide the proof into two steps:
	\begin{step} $A_{1, \mf a} \in \mathscr H_{\mf a}$ and 
		\beq \label{p-e-infinity}
		\inp{g}{A_{1, \mf a}} = \lim_{p \rar \infty}g(p), \quad g \in \mathscr H_{\mf a}.
		\eeq
		In particular, $a_{1, 1} = \inp{A_{1, \mf a}}{A_{1, \mf a}}.$
	\end{step}
	By \eqref{rp}, for all integers $p, q \Ge \rho+1,$
	\beqn
	\|\kappa_{\mf a, p} - \kappa_{\mf a, q}\|^2 &=& \|\kappa_{\mf a, p}\|^2 - 2 \Re (\inp{\kappa_{\mf a, p}}{\kappa_{\mf a, q}}) + \|\kappa_{\mf a, q}\|^2 \\
	&=&    \kappa_{\mf a}(p, p) - 2 \Re (\kappa_{\mf a}(q, p)) + \kappa_{\mf a}(q, q), 
	\eeqn
	which converges to $0$ as $p, q \rar \infty$ (use \eqref{vanish-infty-1} with $k=1$ and $l=1$). Thus, $\{\kappa_{\mf a, p}\}_{p \Ge \rho +1}$ is a Cauchy sequence in $\mathscr H_{\mf a},$ and hence it converges to some $f \in \mathscr H_{\mf a}.$ By Remark~\ref{rmk-H-k}(ii), $\kappa_{\mf a, p}(s) \rar f(s)$ as $p \rar \infty$ for every $s \in \mathbb H_{\rho}.$ Hence, by \eqref{vanish-infty-2} (with $k=1$ and $l=1$),
	\beqn
	f(s)=\sum_{m=1}^{\infty} a_{m1} m^{-s}, \quad s \in \mathbb H_{\rho+1}.
	\eeqn This combined with Remark~\ref{rmk-H-k}(ii) and the identity theorem shows that the above equality holds everywhere on $\mathbb H_\rho.$ This yields that $A_{1, \mf a} \in \mathscr H_{\mf a}.$ To see \eqref{p-e-infinity}, note that 
	by the reproducing property of $\kappa_{\mf a}$ (see \eqref{rp}), for any $g \in \mathscr H_{\mf a},$
	\beqn
	\inp{g}{A_{1, \mf a}} = \inp{g}{\lim_{p \rar \infty}\kappa_{\mf a, p}} = \lim_{p \rar \infty} \inp{g}{\kappa_{\mf a, p}} = \lim_{p \rar \infty}g(p).
	\eeqn
	To get the remaining part of Step 1, let $g=A_{1, \mf a}$ in \eqref{p-e-infinity} and apply \cite[Lemma~11.1]{Ap}.
	
	\begin{step} If \eqref{ind-hypo} holds,
		then $A_{k+1, \mf a} \in  \mathscr H_{\mf a}$ and $a_{k+1, n} = 
		\inp{A_{n, \mf a}}{A_{k+1, \mf a}}$ for every $n=1, \ldots, k+1.$  
	\end{step}
	For every integer $p \Ge \rho + 1,$ note that $g_p := \kappa_{\mathbf a, p} - \sum_{l=1}^k l^{-p} A_{l, \mf a} \in \mathscr H_{\mf a}$ and 
	\beq \label{p-cgn}
	g_p(s) = \sum_{m =1}^{\infty} \sum_{n=k+1}^{\infty} a_{m, n} m^{-s} n^{- p}, \quad s \in \mathbb H_{\rho+1}.
	\eeq
	We claim that $\{(k+1)^{p}g_p\}_{p \Ge \rho + 1}$ is a Cauchy sequence in $\mathscr H_{\mf a}.$
	Note that for any integer $p \Ge \rho + 1,$ by the induction hypothesis \eqref{ind-hypo}, \eqref{rp} and the conjugate symmetry of $\mf a$ (see Lemma~\ref{self-ad}),
	\allowdisplaybreaks
	\beq \label{eq-2} \notag
	\inp{g_q}{g_p} 
	&=& \kappa_{\mathbf a}(p, q)-  \sum_{l=1}^k l^{-q} A_{l, \mf a}(p) - 
	\sum_{l=1}^k l^{-p}\overline{A_{l, \mf a}(q)} \\ 
	&+& \notag \sum_{m, n=1}^k m^{-q} n^{-p}\inp{A_{m, \mf a}}{A_{n, \mf a}}  \\ \notag
	&=& \sum_{m, n =1}^{\infty} a_{m, n} m^{-p} n^{-q} - \sum_{m=1}^{\infty}  \sum_{l=1}^k a_{m, l} l^{-q}  m^{-p} 
	\\ &-& \notag \sum_{m=1}^{\infty}  \sum_{l=1}^k a_{l, m} l^{-p}  m^{-q} + \sum_{m, n=1}^k m^{-q} n^{-p} \overline{a_{m, n}}\\ 
	&=& \sum_{m, n =k+1}^{\infty}  a_{m, n} m^{-p} n^{-q}.
	\eeq
	It follows that for any integers $p, q \Ge \rho+1,$
	\allowdisplaybreaks
	\beqn
	\|(k+1)^{p}g_p - (k+1)^{q}g_q\|^2 &=& (k+1)^{2p} \|g_p\|^2 
	-  (k+1)^{p+q}\inp{g_q}{g_p} \\ &-& (k+1)^{p+q}\inp{g_p}{g_q} + (k+1)^{2q} \|g_q\|^2 \\
	&=& (k+1)^{2p} \sum_{m, n =k+1}^{\infty} a_{m, n} m^{-p} n^{-p}  \\ &-&   (k+1)^{p+q}\sum_{m, n =k+1}^{\infty}  a_{m, n} m^{-p} n^{-q}     \\ &-&  (k+1)^{p+q} \sum_{m, n =k+1}^{\infty}  a_{n, m} m^{-p}  n^{-q}   \\ &+& (k+1)^{2q}  
	\sum_{m, n =k+1}^{\infty} a_{m, n} m^{-q} n^{-q},
	\eeqn
	which converges $0$ as $p, q \rar \infty$ (apply \eqref{vanish-infty-1}). Thus, the claim stands verified. 
	As in Step 1, one can see using \eqref{p-cgn} and \eqref{vanish-infty-2} together with the identity theorem that 
	$A_{k+1, \mf a,}$ being the limit of  $\{(k+1)^p g_p\}_{p \Ge \rho + 1},$ belongs to $\mathscr H_{\mf a}.$  Moreover, for any $q = 1, \ldots, k$ and integer $p \Ge \rho + 1,$
	\allowdisplaybreaks
	\beqn
	\inp{A_{q, \mf a}}{(k+1)^{p}g_p} &=&  
	(k+1)^{p} \Big(\inp{A_{q, \mf a}}{\kappa_{\mathbf a, p}} 
	- \sum_{l=1}^k l^{-p}\inp{A_{q, \mf a}}{A_{l, \mf a}}\Big) \\
	&\overset{\eqref{ind-hypo}}=& (k+1)^{p} \Big(A_{q, \mf a}(p) - \sum_{l=1}^k l^{-p}a_{l, q}\Big) \\
	&=&  (k+1)^{p} \Big(\sum_{m=1}^{\infty} a_{m, q} m^{-p} - \sum_{l=1}^ka_{l, q} l^{-p}\Big)\\
	&=& (k+1)^p \sum_{m=k+1}^{\infty} a_{m, q} m^{-p},
	\eeqn
	which, by \cite[Chapter~11, Lemma~1]{Ap}, converges to $a_{k+1, q}.$ Hence, we obtain
	\beqn
	\inp{A_{q, \mf a}}{A_{k+1, \mf a}}=a_{k+1, q}, \quad q=1, \ldots, k.
	\eeqn 
	Since $\{(k+1)^p g_p\}_{p \Ge \rho + 1}$ converges to $A_{k+1, \mf a}$ in $\mathscr H_{\mf a},$ 
	one can see using \eqref{eq-2} and \eqref{vanish-infty-1} that $\inp{A_{k+1, \mf a}}{A_{k+1, \mf a}}=a_{k+1, k+1},$
	which completes the proof.
\end{proof}

\begin{proof}[Proof of Theorem~\ref{char-pdsk}]
	The implication (i)$\Rightarrow$(ii) and the remaining part follow from Lemma \ref{grammian}.	To see the implication (ii)$\Rightarrow$(i), 
	assume that $\mf a$ is a formally positive semi-definite matrix. For a positive integer $N,$ $\{s_{1}, \ldots, s_{N}\} \subseteq \mathbb H_{\rho}$ and $\{c_{1}, \ldots, c_{N}\} \subseteq \mathbb{C}$, note that
	\allowdisplaybreaks
	\beqn
	\sum_{i,j = 1}^{N} c_{i} \overline{c_{j}} \kappa_{\mf a}(s_{i}, s_{j}) &=& \sum_{m,n = 1}^{\infty} \Big(\sum_{i,j = 1}^{N} c_{i} \overline{c_{j}} m^{-s_{i}} n^{-\overline{s_{j}}}\Big) a_{m, n}\\
	&=&  \lim_{\ell \to \infty} \sum_{m,n = 1}^{\ell}  \Big(\sum_{i= 1}^{N} c_{i} m^{-s_{i}} \Big) \overline{\Big(\sum_{j= 1}^{N} c_{j} n^{-s_{j}} \Big)} a_{m, n} \\
	&=& \lim_{\ell \to \infty} \langle{A_{\ell} X_{\ell}},\,{X_{\ell}}\rangle_{\mathbb C^{\ell}},
	\eeqn
	where $A_{\ell}$ is the $\ell \times \ell$ matrix $(a_{m, n})_{m, n=1}^{\ell}$ and $X_{\ell}$ is the $\ell \times 1$ matrix with $k^{\mbox{\tiny th}}$ entry $\sum_{i= 1}^{N} c_{i} k^{-s_{i}}.$ Since each $A_{\ell}$ is positive semi-definite, we get the desired conclusion.
\end{proof}	

We have already seen that the limit of any element in $\mathscr H_{\mf a}$ at infinity exists (see \eqref{p-e-infinity}). This additional feature of $\mathscr H_{\mf a}$ leads to the following fact (cf. \cite[Theorem~4.5]{PR}). 
\begin{corollary} 
	Let $\mathscr H_{\mathbf a}$ be the reproducing kernel Hilbert space associated with the Dirichlet series kernel $\kappa_{\mathbf a} : \mathbb H_{\rho} \times \mathbb H_{\rho} \rar \mathbb C.$ Then $$\mathscr H_{\mf a, \infty}:=\{f \in \mathscr H_{\mf a} : \lim_{s \rar \infty}f(s)=0\}$$ is a closed subspace of $\mathscr H_{\mathbf a}.$ 
	If $\lim_{t \rar \infty}\lim_{r \rar \infty} \kappa_{\mf a}(t, r) \neq 0,$ then the reproducing kernel $\kappa_{\mathbf a, \infty}$ of 
	$\mathscr H_{\mf a, \infty}$ is given by
	\beqn
	\kappa_{\mathbf a, \infty}(s, u) = \kappa_{\mf a}(s, u) - \frac{ \displaystyle\lim_{r \rar \infty}\kappa_{\mf a}(s, r) 
		\lim_{t \rar \infty}\kappa_{\mf a}(t, u)}{ \displaystyle \lim_{t \rar \infty}\lim_{r \rar \infty} \kappa_{\mf a}(t, r)}, \quad s, u \in \mathbb H_{\rho}.
	\eeqn
\end{corollary}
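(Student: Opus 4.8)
The plan is to treat this as a standard codimension-one restriction of a reproducing kernel Hilbert space, with the evaluation ``at infinity'' playing the role of an ordinary evaluation functional. First I would check that the functional $L : f \mapsto \lim_{s\to\infty} f(s)$ is well defined and bounded on $\mathscr H_{\mf a}$. Well-definedness is exactly \eqref{p-e-infinity}: the limit exists and equals $\inp{f}{A_{1,\mf a}}$. Hence $L(f) = \inp{f}{A_{1,\mf a}}$, so $L$ is bounded with $\|L\| = \|A_{1,\mf a}\|$, and consequently $\mathscr H_{\mf a,\infty} = \ker L = \{A_{1,\mf a}\}^{\perp}$ is a closed subspace of $\mathscr H_{\mf a}$. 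This disposes of the first assertion.

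For the kernel formula, I would first identify the vector $A_{1,\mf a}$ with the limit of the section at infinity. By Step~1 of Lemma~\ref{grammian}, $A_{1,\mf a} = \lim_{p\to\infty}\kappa_{\mf a,p}$ in $\mathscr H_{\mf a}$, so for fixed $s \in \mathbb H_\rho$ we get $A_{1,\mf a}(s) = \lim_{r\to\infty}\kappa_{\mf a}(s,r)$; likewise $\overline{A_{1,\mf a}(u)} = \lim_{t\to\infty}\kappa_{\mf a}(t,u)$, and $\|A_{1,\mf a}\|^2 = \inp{A_{1,\mf a}}{A_{1,\mf a}} = a_{1,1} = \lim_{t\to\infty}\lim_{r\to\infty}\kappa_{\mf a}(t,r)$, which is the quantity assumed nonzero. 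Then the candidate kernel can be rewritten as
\beqn
\kappa_{\mf a,\infty}(s,u) = \kappa_{\mf a}(s,u) - \frac{\inp{A_{1,\mf a}}{\kappa_{\mf a,u}}\,\overline{\inp{A_{1,\mf a}}{\kappa_{\mf a,s}}}}{\|A_{1,\mf a}\|^2}
= \inp{\kappa_{\mf a,u}}{\kappa_{\mf a,s}} - \inp{P\kappa_{\mf a,u}}{P\kappa_{\mf a,s}},
\eeqn
where $P$ is the orthogonal projection onto the one-dimensional span of $A_{1,\mf a}$. Thus $\kappa_{\mf a,\infty}(s,u) = \inp{(I-P)\kappa_{\mf a,u}}{(I-P)\kappa_{\mf a,s}}$, and $I-P$ is precisely the orthogonal projection onto $\mathscr H_{\mf a,\infty} = \{A_{1,\mf a}\}^\perp$. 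A routine verification then shows $(I-P)\kappa_{\mf a,u}$ reproduces point evaluation on $\mathscr H_{\mf a,\infty}$: for $f \in \mathscr H_{\mf a,\infty}$ one has $\inp{f}{(I-P)\kappa_{\mf a,u}} = \inp{(I-P)f}{\kappa_{\mf a,u}} = \inp{f}{\kappa_{\mf a,u}} = f(u)$. Since $(I-P)\kappa_{\mf a,u} \in \mathscr H_{\mf a,\infty}$ for each $u$, uniqueness of the reproducing kernel forces $\kappa_{\mf a,\infty}$ to be the kernel of $\mathscr H_{\mf a,\infty}$.

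I expect the only delicate point to be the bookkeeping that converts the three iterated limits in the displayed formula into the inner-product expressions involving $A_{1,\mf a}$; once the identification $A_{1,\mf a} = \lim_p \kappa_{\mf a,p}$ is in hand (which we may quote from Lemma~\ref{grammian}), together with the fact that norm convergence implies pointwise convergence on $\mathbb H_\rho$ (Remark~\ref{rmk-H-k}(ii)), everything reduces to the standard ``subtract the rank-one piece'' computation for a subspace of codimension one. No transcendence-theoretic input is needed here; the corollary is a soft consequence of \eqref{p-e-infinity} and Step~1 of Lemma~\ref{grammian}.
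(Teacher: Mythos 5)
Your treatment of the first assertion is exactly the paper's: via \eqref{p-e-infinity}, the functional $f\mapsto\lim_{s\to\infty}f(s)$ equals $\inp{\cdot}{A_{1,\mf a}},$ so $\mathscr H_{\mf a,\infty}=\{A_{1,\mf a}\}^{\perp}$ is closed. For the kernel formula, however, your route genuinely differs. You identify $\lim_{r\to\infty}\kappa_{\mf a}(s,r)=A_{1,\mf a}(s),$ $\lim_{t\to\infty}\kappa_{\mf a}(t,u)=\overline{A_{1,\mf a}(u)},$ and $\lim_{t\to\infty}\lim_{r\to\infty}\kappa_{\mf a}(t,r)=a_{1,1}=\|A_{1,\mf a}\|^2,$ then recognize the candidate kernel as $\inp{(I-P)\kappa_{\mf a,u}}{\kappa_{\mf a,s}}$ with $P$ the rank-one projection onto $\mathrm{span}\{A_{1,\mf a}\},$ and verify the reproducing property directly. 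This is the standard codimension-one restriction argument, carried out at the level of kernel sections. The paper instead works at the level of coefficient matrices: it observes that the candidate kernel equals $\kappa_{\mf b}$ with $\mf b=\big(a_{m,n}-a_{m,1}a_{1,n}/a_{1,1}\big)_{m,n},$ invokes Cholesky's theorem (\cite[Theorem~4.2]{PR}) to get formal positive semi-definiteness of $\mf b,$ passes through Theorem~\ref{char-pdsk} to conclude $\kappa_{\mf a,\infty}$ is a positive semi-definite Dirichlet series kernel, and only then matches it to $\mathscr H_{\mf a,\infty}$ via \eqref{kernel-infty}. Your version is more direct and produces the reproducing property explicitly; the paper's version has the side benefit of exhibiting $\kappa_{\mf a,\infty}$ as a Dirichlet series kernel with an explicit coefficient matrix, which is thematically natural in this paper. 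Both are valid.

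One small bookkeeping slip worth fixing: your displayed intermediate expression
$\dfrac{\inp{A_{1,\mf a}}{\kappa_{\mf a,u}}\,\overline{\inp{A_{1,\mf a}}{\kappa_{\mf a,s}}}}{\|A_{1,\mf a}\|^2}$
evaluates to $\dfrac{A_{1,\mf a}(u)\,\overline{A_{1,\mf a}(s)}}{\|A_{1,\mf a}\|^2},$ which is the complex conjugate of the quantity you actually need, namely $\inp{P\kappa_{\mf a,u}}{P\kappa_{\mf a,s}}=\dfrac{A_{1,\mf a}(s)\,\overline{A_{1,\mf a}(u)}}{\|A_{1,\mf a}\|^2}$ (equivalently, $s$ and $u$ should swap roles in your displayed line). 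This is a typo only and does not affect the argument's validity.
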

\begin{proof} 
	By Step~I of Lemma~\ref{grammian}, the analytic symbol $A_{1, \mf a}$ belongs to $\mathscr H_{\mf a},$ 
	\beq \label{kernel-infty}
	\text{$\{\kappa_{\mf a}(\cdot, r)\}_{r \Ge \rho}$ converges to $A_{1, \mf a}$ in $\mathscr H_{\mf a}$},
	\eeq 
	and $\mathscr H_{\mf a, \infty} = \{f \in \mathscr H_{\mf a} : \inp{f}{A_{1, \mf a}}=0\}.$ Thus, $\mathscr H_{\mf a, \infty}$ is a closed subspace of $\mathscr H_{\mathbf a}.$ To see the second part, 
	assume that $\lim_{t \rar \infty}\lim_{r \rar \infty} \kappa_{\mf a}(t, r) \neq 0.$
	Note that
	$\kappa_{\mf a, \infty}=\kappa_{\mf b},$ where 
	\beqn
	\mf b = \Big(a_{m, n} - \frac{a_{m, 1}a_{1, n}}{a_{1, 1}}\Big)_{m, n = 1}^{\infty}.
	\eeqn
	By Cholesky's theorem (see \cite[Theorem~4.2]{PR}), $\mf b$ is formally positive semi-definite, and 
	hence by Theorem~\ref{char-pdsk}, $\kappa_{\mf a, \infty}$ is a positive semi-definite Dirichlet series kernel. It is now easy to see using \eqref{kernel-infty} that $\kappa_{\mf a, \infty}$ is  a reproducing kernel for $\mathscr H_{\mf a, \infty}.$ 
\end{proof}

As a consequence of 
Theorem~\ref{char-pdsk}, we characterize those members of $\mathscr H_{\mf a},$ which are Dirichlet series.
\begin{corollary}  \label{member}
	Let $\mathscr H_{\mathbf a}$ be the reproducing kernel Hilbert space associated with the Dirichlet series kernel $\kappa_{\mathbf a} : \mathbb H_{\rho} \times \mathbb H_{\rho} \rar \mathbb C.$ Let $f(s) = \sum_{n=1}^{\infty} \hat{f}(n) n^{-s}$ be a Dirichlet series convergent at every $s \in \mathbb H_\rho.$ Then 
	$f$ belongs to $\mathscr H_{\mf a}$ if and only if there exists a real number $c \Ge 0$ such that
	$\big(c^2 a_{m, n} - \hat{f}(m)\overline{\hat{f}(n)}\big)_{m, n=1}^{\infty}$
	is formally positive semi-definite.
\end{corollary}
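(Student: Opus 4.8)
The plan is to use the standard criterion for membership in a reproducing kernel Hilbert space: for a holomorphic kernel $\kappa$ on a set $X$ with RKHS $\mathscr H$, a function $f$ on $X$ belongs to $\mathscr H$ with $\|f\|_{\mathscr H} \le c$ if and only if the kernel $c^2\kappa(s,u) - f(s)\overline{f(u)}$ is positive semi-definite (see, e.g., \cite[Theorem~2.20 / Lemma~2.18]{PR}; this is the ``$\mathscr H(c^2\kappa - f\otimes\bar f)$ nonnegative'' criterion). Concretely, $f \in \mathscr H_{\mf a}$ iff there is $c \ge 0$ such that $(s,u) \mapsto c^2\kappa_{\mf a}(s,u) - f(s)\overline{f(u)}$ is a positive semi-definite kernel on $\mathbb H_\rho \times \mathbb H_\rho$. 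So the whole task reduces to translating this kernel-level positivity into the coefficient-matrix statement about $\big(c^2 a_{m,n} - \hat f(m)\overline{\hat f(n)}\big)_{m,n=1}^{\infty}$.

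First I would observe that $f(s)\overline{f(u)} = \sum_{m,n=1}^{\infty} \hat f(m)\overline{\hat f(n)}\, m^{-s} n^{-\overline u}$, so that formally $c^2\kappa_{\mf a}(s,u) - f(s)\overline{f(u)} = \kappa_{\mf b}(s,u)$ with $\mf b = \big(c^2 a_{m,n} - \hat f(m)\overline{\hat f(n)}\big)_{m,n=1}^{\infty}$. I need to check that $\kappa_{\mf b}$ is genuinely a Dirichlet series kernel in the sense of Definition~\ref{def-dsk}, i.e. that the double Dirichlet series $(s,u)\mapsto \kappa_{\mf b}(s,\bar u)$ is regularly convergent on some $\mathbb H_{\rho'}\times\mathbb H_{\rho'}$. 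Since $\kappa_{\mf a}$ is already a Dirichlet series kernel (regularly convergent on $\mathbb H_\rho\times\mathbb H_\rho$, hence absolutely convergent on $\mathbb H_{\rho+1}\times\mathbb H_{\rho+1}$ by Lemma~\ref{D-S-rmk}(i)) and the Dirichlet series $f$ converges on $\mathbb H_\rho$ — hence is absolutely convergent on $\mathbb H_{\rho+1}$ by the classical one-variable bound (Remark~\ref{D-S-rmk-0} with $\lambda_n=\log n$) — the product $f(s)\overline{f(u)}$ is a regularly (indeed absolutely) convergent double Dirichlet series on $\mathbb H_{\rho+1}\times\mathbb H_{\rho+1}$, and termwise subtraction shows $\kappa_{\mf b}$ is a Dirichlet series kernel with abscissa at most $\rho+1$. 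Then Theorem~\ref{char-pdsk}, applied to $\kappa_{\mf b}$, says $\kappa_{\mf b}$ is positive semi-definite if and only if $\mf b$ is formally positive semi-definite, which is exactly the asserted condition.

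Putting the pieces together: if such a $c\ge 0$ exists, then $\mf b$ formally positive semi-definite gives, via Theorem~\ref{char-pdsk}, that $\kappa_{\mf b} = c^2\kappa_{\mf a} - f\otimes\bar f$ is positive semi-definite, whence by the RKHS membership criterion $f\in\mathscr H_{\mf a}$. Conversely, if $f\in\mathscr H_{\mf a}$, the same criterion furnishes $c\ge 0$ (any $c \ge \|f\|_{\mathscr H_{\mf a}}$) with $c^2\kappa_{\mf a} - f\otimes\bar f$ positive semi-definite, and Theorem~\ref{char-pdsk} then forces $\mf b$ to be formally positive semi-definite.

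The main obstacle I anticipate is purely bookkeeping rather than conceptual: I must make sure that the termwise identity $c^2\kappa_{\mf a}(s,u) - f(s)\overline{f(u)} = \sum_{m,n}(c^2 a_{m,n} - \hat f(m)\overline{\hat f(n)}) m^{-s}n^{-\overline u}$ is legitimate, i.e. that the double Dirichlet series expansion of $f(s)\overline{f(u)}$ really converges regularly and can be subtracted coefficient-by-coefficient on a common half-plane; this rests on the absolute convergence facts above together with Fubini, exactly as in the proof of Lemma~\ref{D-S-rmk}(iii). Once that identification is secured, the result is an immediate combination of the RKHS positivity criterion and Theorem~\ref{char-pdsk}.
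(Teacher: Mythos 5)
Your proposal is correct and follows exactly the same route as the paper: invoke the RKHS membership criterion from \cite{PR} to reduce membership of $f$ in $\mathscr H_{\mf a}$ to positivity of the kernel $c^2\kappa_{\mf a}-f\otimes\bar f$, identify its coefficient matrix as $\big(c^2a_{m,n}-\hat f(m)\overline{\hat f(n)}\big)$, and then apply Theorem~\ref{char-pdsk}. The only difference is that you spell out the convergence bookkeeping that the paper dismisses as ``easy to see,'' which is a reasonable level of detail but not a different argument.
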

\begin{proof} 
	By \cite[Theorem~3.11]{PR}, $f \in \mathscr H_{\mf a}$ if and only if there exists a  real number $c$ such that 
	\beqn
	\kappa(s, u) := c^2 \kappa_{\mf a}(s, u) - f(s)\overline{f(u)}, \quad s, u \in \mathbb H_\rho 
	\eeqn
	is a positive semi-definite kernel. It is easy to see that $\kappa$ is a Dirichlet series kernel with
	the coefficient matrix $\big(c^2 a_{m, n} - \hat{f}(m)\overline{\hat{f}(n)}\big)_{m, n=1}^{\infty}.$ The desired equivalence now follows from Theorem~\ref{char-pdsk}. 
\end{proof}

We now show that $\mathscr H_{\mf a}$ consists only of Dirichlet series.
\begin{theorem} \label{total}
	Let $\mathscr H_{\mathbf a}$ be the reproducing kernel Hilbert space associated with the Dirichlet series kernel $\kappa_{\mathbf a} : \mathbb H_{\rho} \times \mathbb H_{\rho} \rar \mathbb C.$ 
	If 
	$
	A_{n, \mf a},$ $n \Ge 1$ are the analytic symbols of $\mf a,$ 
	then the following statements are valid$:$
	\begin{enumerate}
		\item[$(i)$] the Dirichlet series kernel $\kappa_{\mf a}$ is given by 
		\beqn
		\kappa_{\mf a}(s, u) = \sum_{n =1}^{\infty} A_{n, \mf a}(s) n^{-\bar{u}}, \quad s, u \in \mathbb H_{\rho},
		\eeqn
		\item[$(ii)$]
		any element $f$ in $\mathscr H_a$ is a Dirichlet series given by
		\beqn
		f(s) = \sum_{n =1}^{\infty} \inp{f}{A_{n, \mf a}}_{\mathscr H_{\mf a}} \, n^{-s}, \quad s \in \mathbb H_\rho,
		\eeqn 
		\item[$(iii)$] $\{A_{n, \mf a}\}_{n = 1}^{\infty}$ is a total set in $\mathscr H_{\mf a}.$
	\end{enumerate}
\end{theorem}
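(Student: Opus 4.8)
The plan is to reduce all three parts to a single convergence statement in $\mathscr H_{\mf a}$: namely that $\sum_{n \Ge 1} n^{-\bar u} A_{n, \mf a}$ converges in $\mathscr H_{\mf a}$ for every $u \in \mathbb H_\rho$, with sum $\kappa_{\mf a, u}$. I would begin with (i). Since the double Dirichlet series $\sum_{m, n \Ge 1} a_{m, n} m^{-s} n^{-u}$ attached to $(s, u) \mapsto \kappa_{\mf a}(s, \bar u)$ is regularly convergent on $\mathbb H_\rho \times \mathbb H_\rho$, at every such point its iterated sum $\sum_n \big( \sum_m a_{m, n} m^{-s} \big) n^{-u}$ exists and equals the double sum (a standard feature of regular convergence of double Dirichlet series; cf.\ \cite{HR, CGM}). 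The inner sum is precisely $A_{n, \mf a}(s)$, so substituting $\bar u$ for $u$ yields $\kappa_{\mf a}(s, u) = \sum_{n \Ge 1} A_{n, \mf a}(s)\, n^{-\bar u}$ on $\mathbb H_\rho \times \mathbb H_\rho$, which is (i). (Should one wish to avoid invoking this property of regular convergence, the identity can be obtained first on $\mathbb H_{\rho+1} \times \mathbb H_{\rho+1}$ by Lemma~\ref{D-S-rmk}(i) and Fubini, and then extended to $\mathbb H_\rho \times \mathbb H_\rho$ by the identity theorem once the convergence statement below is established.)

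The crux is to prove that convergence statement. Fix $u \in \mathbb H_\rho$ and put $P_N = \sum_{n=1}^{N} n^{-\bar u} A_{n, \mf a} \in \mathscr H_{\mf a}$. Using the Grammian identity $a_{m, n} = \inp{A_{n, \mf a}}{A_{m, \mf a}}_{\mathscr H_{\mf a}}$ from Theorem~\ref{char-pdsk}, a direct computation gives, for $M > N$,
\[
\|P_M - P_N\|^2 \;=\; \sum_{m, n = N+1}^{M} a_{m, n}\, m^{-u}\, n^{-\bar u},
\]
which is exactly the block partial sum over $[N+1, M] \times [N+1, M]$ of the regularly convergent double Dirichlet series $\sum_{m, n} a_{m, n} m^{-x} n^{-y}$ at the point $(x, y) = (u, \bar u)$. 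As $\Re(u) = \Re(\bar u) > \rho$, this point lies in $\mathbb H_\rho \times \mathbb H_\rho$, so the rectangular partial sums of that series converge; consequently its block partial sums over $[N+1, M]^2$ tend to $0$ as $N, M \rar \infty$, and $\{P_N\}$ is Cauchy in $\mathscr H_{\mf a}$. Let $\Phi_u$ denote its limit. By the reproducing property, $\Phi_u(s) = \lim_{N} \sum_{n=1}^{N} n^{-\bar u} A_{n, \mf a}(s) = \sum_{n \Ge 1} A_{n, \mf a}(s)\, n^{-\bar u} = \kappa_{\mf a}(s, u)$ by (i), so $\Phi_u = \kappa_{\mf a, u}$ as elements of $\mathscr H_{\mf a}$.

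Parts (ii) and (iii) now follow at once. For $f \in \mathscr H_{\mf a}$ and $u \in \mathbb H_\rho$, the reproducing property, the identification $\kappa_{\mf a, u} = \Phi_u$, and continuity of the inner product give $f(u) = \inp{f}{\kappa_{\mf a, u}} = \inp{f}{\Phi_u} = \lim_{N} \sum_{n=1}^{N} n^{-u} \inp{f}{A_{n, \mf a}} = \sum_{n \Ge 1} \inp{f}{A_{n, \mf a}}_{\mathscr H_{\mf a}}\, n^{-u}$, which is (ii); in particular $f$ is a Dirichlet series convergent throughout $\mathbb H_\rho$. Finally, if $f \in \mathscr H_{\mf a}$ is orthogonal to every $A_{n, \mf a}$, then (ii) forces $f \equiv 0$ on $\mathbb H_\rho$, so $\{A_{n, \mf a}\}_{n \Ge 1}$ has trivial orthogonal complement in $\mathscr H_{\mf a}$, i.e.\ it is total; this is (iii).

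The main obstacle is the convergence claim in the second paragraph: absolute convergence of the double series only yields $\mathscr H_{\mf a}$-convergence of $\sum_n n^{-\bar u} A_{n, \mf a}$ on $\mathbb H_{\rho+1}$, and the point is to extend this to all of $\mathbb H_\rho$. The mechanism is to recognise the squared norm of a tail of this series as a block partial sum of the \emph{original} regularly convergent double Dirichlet series --- which is where the Grammian identity \eqref{eq-gram} is used --- and to note that evaluating at $(u, \bar u)$ keeps one inside the region of regular convergence. Two minor points also need attention: that rectangular convergence of a double series forces its block partial sums to vanish, and, for (i), that regular convergence legitimises passing to the iterated sum.
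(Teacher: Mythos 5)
Your proposal is correct and follows essentially the same route as the paper: the heart of both arguments is showing $\sum_{n} A_{n,\mf a}\, n^{-\bar u}$ is Cauchy in $\mathscr H_{\mf a}$ by using the Grammian identity \eqref{eq-gram} to recognise the squared norm of a tail as a block partial sum of the convergent double Dirichlet series at $(u,\bar u)$, after which (ii) and (iii) follow from the reproducing property. The only cosmetic difference is in (i), where the paper first works on $\mathbb H_{\rho+1}$ via absolute convergence and then applies the identity theorem twice --- exactly the fallback you already indicate.
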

\begin{proof} By Theorem~\ref{char-pdsk}, 
	for every positive integer $n,$ the Dirichlet series
	$A_{n, \mf a}$ belongs to $\mathscr H_{\mf a}.$ We claim that for every $u \in \mathbb H_\rho,$ 
	the series $\sum_{n =1}^{\infty} A_{n, \mf a} n^{-\bar{u}}$ is convergent in $\mathscr H_{\mf a}.$
	Note that for any positive integers $M, N$ such that $M > N$ and $u \in \mathbb H_\rho,$
	\beqn
	\Big\|\sum_{n =N+1}^{M} A_{n, \mf a} n^{-\bar{u}}\Big\|^2 = \sum_{k, l = N+1}^M \inp{A_{k, \mf a}}{A_{l, \mf a}}k^{-\bar{u}}l^{-u} \overset{\eqref{eq-gram}}= \sum_{k, l = N+1}^M a_{l, k}k^{-\bar{u}}l^{-{u}},
	\eeqn
	which converges to $0$ as $M, N \rar \infty$ (since $\kappa_{\mf a}$ is convergent on $\mathbb H_\rho \times \mathbb H_\rho$). This completes the verification of the claim.
	
	(i) 
	Let $u \in \mathbb H_{\rho+1}.$ By Lemma~\ref{D-S-rmk}(i),
	\beqn
	\kappa_{\mf a, u}(s) = \sum_{m, n =1}^{\infty} a_{m, n} m^{-s} n^{-\bar{u}} = \sum_{n =1}^{\infty} A_{n, \mf a}(s) n^{-\bar{u}}, \quad s \in \mathbb H_{\rho+1}.
	\eeqn
	Since $\kappa_{\mf a, u}$ and $\sum_{n =1}^{\infty} A_{n, \mf a} n^{-\bar{u}}$ are holomorphic on $\mathbb H_\rho$ (see Remark~\ref{rmk-H-k}(ii)), by the identity theorem (see \cite{ShSt}), we obtain 
	\beqn
	\kappa_{\mf a, u}(s) = \sum_{n =1}^{\infty} A_{n, \mf a}(s)n^{-\bar{u}}, \quad s \in \mathbb H_{\rho}.
	\eeqn
	By fixing $s \in \mathbb H_\rho$ and varying $u$ over $\mathbb H_{\rho+1},$ another application of the identity theorem yields (i).
	
	(ii) By the reproducing property \eqref{rp} of $\mathscr H_{\mf a}$ and (i), 
	\beqn
	f(s) = \inp{f}{\kappa_{\mf a, s}}= \sum_{n =1}^{\infty} \inp{f}{A_{n, \mf a}}_{\mathscr H_{\mf a}}\, n^{-s}, \quad s \in \mathbb H_{\rho}.
	\eeqn
	
	(iii) By (ii), $f=0$ provided 
	$\inp{f}{A_{n, \mf a}} =0$ for every integer $n \Ge 1.$ 
\end{proof}

\section{When analytic symbols are Dirichlet polynomials} \label{Sec3}

Let $k$ be a non-negative integer and let $\kappa_{\mf a}$ be a Dirichlet series kernel. We say that $\kappa_a$ is a {\it $k$-diagonal kernel} or {\it finite bandwidth $2k$} if $a_{m, n}=0$ for $m, n \Ge 1$ such that $|m-n| > k.$ 

\begin{proposition} Let $\kappa_{\mf a}$ be a positive semi-definite $k$-diagonal Dirichlet series kernel. Then, for every integer $n \Ge 1,$ 
the analytic symbols $A_{n, \mf a}$ of $\mf a$ is a Dirichlet polynomial. Moreover,  
$\mathscr H_{\mf a}$ admits an orthonormal basis consisting of Dirichlet polynomials.
 In addition, if $\mathscr H_{\mf a}$ contains the space $\mathcal D[s]$ of Dirichlet polynomials, then $\mathcal D[s]$ is dense in $\mathscr H_{\mf a}.$ 
 \end{proposition}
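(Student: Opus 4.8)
The plan is to dispatch the three assertions in order, each as a short consequence of the results of Section~\ref{Sec2}.

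\emph{Analytic symbols are Dirichlet polynomials.} Since $\kappa_{\mf a}$ is $k$-diagonal, $a_{m, n} = 0$ whenever $|m-n| > k$, so in the defining series $A_{n, \mf a}(s) = \sum_{m \ge 1} a_{m, n} m^{-s}$ only the finitely many indices $m$ with $\max\{1,\, n-k\} \le m \le n+k$ can contribute; thus $A_{n, \mf a}(s) = \sum_{m = \max\{1,\, n-k\}}^{n+k} a_{m, n} m^{-s}$ is a Dirichlet polynomial.

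\emph{Existence of an orthonormal basis of Dirichlet polynomials.} By Theorem~\ref{char-pdsk} each $A_{n, \mf a}$ belongs to $\mathscr H_{\mf a}$, and by Theorem~\ref{total}(iii) the countable family $\{A_{n, \mf a}\}_{n \ge 1}$ is total in $\mathscr H_{\mf a}$ (in particular $\mathscr H_{\mf a}$ is separable, cf. Remark~\ref{rmk-H-k}(i)). I would run the Gram--Schmidt process on the enumeration $A_{1, \mf a}, A_{2, \mf a}, \dots$, discarding at each stage any vector already lying in the span of its predecessors and normalizing the remaining ones. This produces an orthonormal system $\{e_j\}_j$ with $\mathrm{span}\{e_j\}_j = \mathrm{span}\{A_{n, \mf a} : n \ge 1\}$, whence $\overline{\mathrm{span}}\{e_j\}_j = \mathscr H_{\mf a}$ and $\{e_j\}_j$ is an orthonormal basis (a finite one if $\dim \mathscr H_{\mf a} < \infty$). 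Each $e_j$ is a finite linear combination of the $A_{n, \mf a}$, and a finite linear combination of Dirichlet polynomials is again a Dirichlet polynomial, which yields the claim.

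\emph{Density of $\mathcal D[s]$.} By the first part, $A_{n, \mf a} \in \mathcal D[s]$ for every $n$, and by hypothesis $\mathcal D[s] \subseteq \mathscr H_{\mf a}$; hence $\overline{\mathcal D[s]} \supseteq \overline{\mathrm{span}}\{A_{n, \mf a} : n \ge 1\} = \mathscr H_{\mf a}$ by Theorem~\ref{total}(iii), so $\overline{\mathcal D[s]} = \mathscr H_{\mf a}$, i.e. $\mathcal D[s]$ is dense. The argument is essentially routine; the only point needing care is the Gram--Schmidt step, where one must remember that $\{A_{n, \mf a}\}$ need not be linearly independent (hence the discarding of dependent vectors) and that the orthonormalized vectors remain Dirichlet polynomials precisely because Gram--Schmidt forms only finite linear combinations.
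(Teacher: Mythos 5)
Your proof is correct and takes essentially the same route as the paper: express each $A_{n,\mf a}$ as a finite sum by the $k$-diagonal hypothesis, invoke Theorem~\ref{char-pdsk} and Theorem~\ref{total} together with Gram--Schmidt, and observe that density of $\mathcal D[s]$ follows from totality of $\{A_{n,\mf a}\}$. The only difference is that you spell out the (valid and worth noting) point that one must discard dependent vectors during Gram--Schmidt, which the paper leaves implicit.
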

 \begin{proof}
 Since $\kappa_{\mf a}$ is $k$-diagonal, the analytic symbol $A_{n, \mf a}=\sum_{m=1}^{\infty} a_{m, n} m^{-s} $ is equal to the Dirichlet polynomial $\sum_{m=N}^{n+k} a_{m, n} m^{-s},$ where $N$ is equal to $\max\{n-k, 1\}.$  Thus, 
by Theorem~\ref{total} and Gram-Schmidt orthonormalization process, $\mathscr H_{\mf a}$ has an orthonormal basis consisting of Dirichlet polynomials. The remaining assertion is now clear.
\end{proof}

It is evident that all analytic symbols $A_{n, \mf a}$ could be Dirichlet polynomials without the kernel $\kappa_{\mf a}$ being $k$-diagonal for any integer $k \Ge 1.$ 
Indeed, for $k \in \mathbb N,$ let $A_k$ denote the positive semi-definite square matrix $A_{k}$ of order $k$ that has all entries equal to $1$ and consider the matrix $\mathbf a$ given by $\mathbf a = \oplus_{k=1}^{\infty}A_{k}.$ 
Clearly, $\kappa_\mf a$ is a positive semi-definite kernel, which is not $k$-diagonal for any integer $k \Ge 1.$
We exhibit below a class of Dirichlet series kernels for which all but finitely many analytic symbols are Dirichlet polynomials. Our construction relies on a variant of Weyl's inequality and the Schur complement (see \cite{B-R}).

\allowdisplaybreaks
For a positive integer $k,$ let $\mf b=(b_{i, j})_{i, j =1}^k$  be a positive semi-definite matrix, $\{c_{k+l}\}_{l \Ge 1}$ be a sequence of complex numbers and
$\{d_{k+l}\}_{l \Ge 1}$ be a sequence of positive real numbers.
Consider the matrix $\mf a$ given by
	\beqn
	\mf a = 
	\begin{pmatrix}
		b_{1,1} & b_{1,2} & \ldots & b_{1,k} & c_{k+1} & c_{k+2} & \ldots  \\
		b_{2,1} & b_{2,2} & \ldots & b_{2,k} & c_{k+1} & c_{k+2} & \ldots  \\
		\vdots & \vdots & \vdots & \vdots & \vdots  & \vdots & \ldots  \\
		b_{k,1} & b_{k,2} & \ldots & b_{k,k} & c_{k+1} & c_{k+2} & \ldots \\
		\overline{c}_{k+1} & \overline{c}_{k+1} & \ldots & \overline{c}_{k+1} & d_{k+1} & 0  & \ldots\\
		\overline{c}_{k+2} & \overline{c}_{k+2} & \ldots &\overline{c}_{k+2} & 0 &  d_{k+2} & \ldots \\
		\vdots & \vdots &  \vdots &\vdots & \vdots & \vdots & \ddots 
	\end{pmatrix}.
\eeqn
Equivalently,  
  $\mf a$ is a block symmetric matrix given by
	\begin{equation*}
	\mf a = 
	\begin{pmatrix}
	\mf b & \mf c \\
	\mf c^* & \mf d
		\end{pmatrix},
	\end{equation*}
	where $\mf c$ is the $k \times \infty$ matrix with all rows equal to $(c_{k+1}, c_{k+2}, \ldots)$ and $\mf d$ is the diagonal matrix with diagonal entries $d_{k+1}, d_{k+2}, \ldots.$  	
	We say that $\mf a$ belongs to the class $\mathscr S_k$ if the following conditions hold:
	\begin{enumerate}
	\item[(C1)] $\mf b$ is a positive semi-definite matrix,
	\item[(C2)] $\displaystyle \sum_{l = 1}^{\infty} \frac{|c_{k+l}|^{2}}{d_{k+l}}$ is finite.
	\end{enumerate}
	\begin{remark}
	The series given in $(C2)$ can be thought of as the maximum eigenvalue of $k^{-1}\mf c \mf d^{-1} \mf c^*.$ 
	\end{remark}
With every $\mf a \in \mathscr S_k,$ we associate the following number:
\beqn 
\mathcal s(\mf a):= \lambda_{\min}(\mf b) - k \sum_{l = 1}^{\infty} \frac{|c_{k+l}|^{2}}{d_{k+l}},
\eeqn
where $\lambda_{\min}(\cdot)$ denotes the minimum eigenvalue.
For a positive integer $m,$ let $\mf e_m=(e_{p, q})_{p, q=1}^{\infty}$ denote the matrix given by
\beqn
e_{p, q} = \begin{cases} 1 & \mbox{if~}p=m, \,q = m, \\
0 & \mbox{otherwise}.
\end{cases}
\eeqn
\begin{lemma}\label{Fu-Ch}
Assume that $\mf a \in \mathscr S_k.$ 
Then the following statements are valid$:$
\begin{enumerate}
\item[$(i)$] if $\mathcal s(\mf a) \Ge 0$ and $\epsilon$ is a real number such that $0 \Le \epsilon \Le \mathcal s(\mf a),$
then for every positive integer $m \Le k,$ 
$\mf a - \epsilon \mf e_{m}$ is a formally positive semi-definite matrix, 
\item[$(ii)$] if $\mathcal s(\mf a) \Ge 0,$ then $\mf a$ is a formally positive semi-definite matrix,
\item[$(iii)$] if $\mathcal s(\mf a) > 0,$  then for every positive integer $m \Ge 1,$  there exists $\epsilon_m > 0$ such that $\mathcal s(\mf a - \epsilon_m \mf e_{m}) > 0.$ In this case,
$\mf a - \epsilon_m \mf e_{m}$ is a formally positive semi-definite matrix.
\end{enumerate}
\end{lemma}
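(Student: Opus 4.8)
The plan is to reduce everything to a Schur-complement computation combined with a variant of Weyl's inequality. Recall that $\mf a$ has the block form $\begin{pmatrix}\mf b & \mf c\\ \mf c^* & \mf d\end{pmatrix}$ with $\mf d$ diagonal and positive, and that $\mf a$ is formally positive semi-definite precisely when every finite principal truncation $\mf a_N=(a_{i,j})_{i,j=1}^N$ (with $N\Ge k$) is positive semi-definite. Write such a truncation as $\begin{pmatrix}\mf b & \mf c_N\\ \mf c_N^* & \mf d_N\end{pmatrix}$, where $\mf c_N$ keeps the first $N-k$ columns of $\mf c$ and $\mf d_N$ the corresponding diagonal block. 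Since $\mf d_N$ is invertible, the Schur complement criterion (see \cite{B-R}) says this truncation is positive semi-definite iff the Schur complement $\mf b - \mf c_N \mf d_N^{-1}\mf c_N^*$ is positive semi-definite. Because every row of $\mf c$ equals $(c_{k+1},c_{k+2},\dots)$, the matrix $\mf c_N\mf d_N^{-1}\mf c_N^*$ is the $k\times k$ matrix all of whose entries equal $\sum_{l=1}^{N-k}|c_{k+l}|^2/d_{k+l}=:t_N$; that is, $\mf c_N\mf d_N^{-1}\mf c_N^* = t_N J_k$, where $J_k$ is the all-ones $k\times k$ matrix. The key numerical observation is $\|t_N J_k\|=t_N\,\|J_k\| = t_N k$ (since $J_k$ has eigenvalues $k$ and $0$), and $t_N k \Le k\sum_{l=1}^{\infty}|c_{k+l}|^2/d_{k+l}$, which is finite by (C2).

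With this in hand, part (ii) follows: for every $N\Ge k$, by Weyl's inequality $\lambda_{\min}(\mf b - t_N J_k)\Ge \lambda_{\min}(\mf b)-\|t_N J_k\| = \lambda_{\min}(\mf b)-t_N k \Ge \lambda_{\min}(\mf b)-k\sum_{l=1}^{\infty}|c_{k+l}|^2/d_{k+l}=\mathcal s(\mf a)\Ge 0$, so each Schur complement, hence each truncation, is positive semi-definite, whence $\mf a$ is formally positive semi-definite. For part (i), note that $\mf a-\epsilon\mf e_m$ (with $m\Le k$) has exactly the same block structure, with $\mf b$ replaced by $\mf b-\epsilon E_m$, where $E_m$ is the $k\times k$ matrix with a single $1$ in the $(m,m)$ entry; the off-diagonal block $\mf c$ and the diagonal tail $\mf d$ are untouched, so (C2) still holds and the same Schur-complement reduction applies. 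Again by Weyl, for every truncation, $\lambda_{\min}\big((\mf b-\epsilon E_m) - t_N J_k\big)\Ge \lambda_{\min}(\mf b) - \|\epsilon E_m\| - t_N k \Ge \lambda_{\min}(\mf b)-\epsilon - k\sum_{l=1}^{\infty}|c_{k+l}|^2/d_{k+l} = \mathcal s(\mf a)-\epsilon\Ge 0$, using $\|\epsilon E_m\|=\epsilon$ and the hypothesis $\epsilon\Le\mathcal s(\mf a)$. Hence every truncation of $\mf a-\epsilon\mf e_m$ is positive semi-definite.

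For part (iii) we must handle all $m\Ge 1$, not just $m\Le k$. If $m\Le k$, then (i) already shows that $\mf a-\epsilon\mf e_m$ is formally positive semi-definite for $\epsilon\Le\mathcal s(\mf a)$; we claim that in fact $\mathcal s(\mf a-\epsilon_m\mf e_m)>0$ for a suitable $\epsilon_m>0$. Indeed, subtracting $\epsilon\mf e_m$ with $m\Le k$ changes $\mf b$ to $\mf b-\epsilon E_m$, and since $\lambda_{\min}$ is continuous (indeed Lipschitz) in the matrix entries, $\lambda_{\min}(\mf b-\epsilon E_m)\to\lambda_{\min}(\mf b)$ as $\epsilon\to 0^+$; the subtracted series $k\sum|c_{k+l}|^2/d_{k+l}$ is unchanged, so $\mathcal s(\mf a-\epsilon\mf e_m)\to\mathcal s(\mf a)>0$, and we may pick $\epsilon_m>0$ small with $\mathcal s(\mf a-\epsilon_m\mf e_m)>0$; by (ii) applied to $\mf a-\epsilon_m\mf e_m\in\mathscr S_k$, this matrix is formally positive semi-definite. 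If instead $m>k$, subtracting $\epsilon\mf e_m$ replaces $d_m$ by $d_m-\epsilon$; choosing $0<\epsilon_m<d_m$ keeps $\mf d$ positive, and the new tail series is $k\big(\sum_{l}|c_{k+l}|^2/d_{k+l} + |c_m|^2(\tfrac{1}{d_m-\epsilon_m}-\tfrac{1}{d_m})\big)$, which converges to the old value as $\epsilon_m\to 0^+$; hence $\mathcal s(\mf a-\epsilon_m\mf e_m)\to\mathcal s(\mf a)>0$ and we again pick $\epsilon_m>0$ small, then invoke (ii). The main obstacle is the bookkeeping for $m>k$: one must verify that $\mf a-\epsilon_m\mf e_m$ still lies in $\mathscr S_k$ — i.e.\ that $\mf d$ stays strictly positive (forcing $\epsilon_m<d_m$) and that condition (C2) is preserved, which it is since only one term of the convergent series is perturbed — before (ii) may be applied.
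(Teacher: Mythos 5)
Your proposal is correct and follows essentially the same route as the paper: reduce to finite leading truncations, observe that $\mf c_N\mf d_N^{-1}\mf c_N^*$ is $t_N$ times the all-ones $k\times k$ matrix (with top eigenvalue $kt_N$ bounded by the series in (C2)), apply Weyl's inequality to bound $\lambda_{\min}$ of the Schur complement below by $\mathcal s(\mf a)-\epsilon$, and conclude via the Schur-complement criterion; part (iii) is likewise handled by perturbing either $\lambda_{\min}(\mf b)$ or a single term $d_m$ and reapplying (ii). The only cosmetic difference is that you argue by continuity in (iii) where the paper computes $\mathcal s(\mf a-\epsilon\mf e_m)$ explicitly for $m>k$, which changes nothing of substance.
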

\begin{proof}
(i) Let $j \Ge 1$ and $m \Le k$ be positive integers and let $\epsilon$ be a real number such that $0 \Le \epsilon \Le \mathcal s(\mf a).$
	Let $\mf a_j$ and $\mf e_{m, j}$  denote the $(k+j)\times(k+j)$ truncated matrices of $\mf a$ and $\mf e_m,$ respectively.
	Note that $\mf a_j - \epsilon \mf e_{m, j}$ is rewritten as
	\beqn
	\mf a_j - \epsilon \mf e_{m, j} = 
	\begin{pmatrix}
		\mf b^{(m)}_{j, \epsilon} & \mf c_j  \\
		\mf c^{*}_j & \mf d_j 
	\end{pmatrix},
	\eeqn
	where $\mf b^{(m)}_{j, \epsilon}$ is the leading principal submatrix of $\mf a_j - \epsilon \mf e_{m, j},$ 
	$\mf c_j$ is the matrix of order $k \times j,$
	and $\mf d_j= \text{diag}(d_{k+1}, \ldots , d_{k+j}).$
	Note that $\mf d_j$ is a strictly positive definite matrix and 
	\beqn  
\mf c_j \mf d^{-1}_j\mf c^{*}_j  &=& \Big(\sum_{l = 1}^{j} \frac{|c_{k+l}|^{2}}{d_{k+l}}\Big) \mf 1,
	\eeqn
	where $\mf 1$ denotes the $k \times k$ matrix with all entries $1.$
By \cite[Lemma~2.20]{PR},	 $\mf c_j \mf d^{-1}_j\mf c^{*}_j$ has eigenvalues $k\sum_{l = 1}^{j} \frac{|c_{k+l}|^{2}}{d_{k+l}}$ and $0$ of multiplicity $1$ and $k-1,$ respectively.
It now follows from a variant of the Weyl's inequality (see \cite[Theorem~3.8.2]{B-R}) applied two times that
	\beq 
	\label{St-ine}
	\notag
	\lambda_{\min}(\mf b^{(m)}_{j, \epsilon} - \mf c_j \mf d^{-1}_j \mf c^{*}_j) &\Ge&  
	\lambda_{\min}(\mf b^{(m)}_{j, \epsilon}) - k\sum_{l = 1}^{j} \frac{|c_{k+l}|^{2}}{d_{k+l}}\\
	&\Ge& \lambda_{\min}(\mf b) - \epsilon - k\sum_{l = 1}^{j} \frac{|c_{k+l}|^{2}}{d_{k+l}} \notag \\ 
	&\Ge& \mathcal s(\mf a) - \epsilon. 
	\eeq
	Thus, $\mf b^{(m)}_{j, \epsilon} - \mf c_j \mf d^{-1}_j\mf c^{*}_j$ is positive semi-definite. Since $\mf d_j$ is positive semi-definite, by the Schur complement (see the discussion following the proof of \cite[Theorem~26.2]{B-R}), $\mf a_j - \epsilon \mf e_{m, j}$ is positive semi-definite, 
and we obtain (i).

(ii) If $\mathcal s(\mf a) \Ge 0$ and $\epsilon =0$ then the conclusion in (i) is valid for any integer $m > k$ (since \eqref{St-ine} holds with $\epsilon =0$). This gives (ii).
	
(iii) In view of (i), we may assume that $m$ be a positive integer bigger than $k.$ Assume that $\mathcal s(\mf a) > 0.$ 
Note that for any real number $\epsilon$ with $0 < \epsilon < d_{m},$
\beqn
\mathcal s(\mf a-\epsilon \mf e_m) &=& \lambda_{\min}(\mf b) - k \Big(\sum_{\overset{l = 1}{l \neq m-k}}^{\infty} \frac{|c_{k+l}|^{2}}{d_{k+l}} + \frac{|c_{m}|^{2}}{d_{m}-\epsilon}\Big) \\
&=& \mathcal s(\mf a) + k|c_{m}|^{2}\Big(\frac{1}{d_{m}} - \frac{1}{d_{m}-\epsilon}\Big).  
\eeqn   
Since $\mathcal s(\mf a) > 0,$ there exists $\epsilon_m \in (0, d_m)$ such that $\mathcal s(\mf a-\epsilon_m \mf e_m) >0.$ The desired conclusion now follows from (ii). 
	\end{proof}
	
	The converse of Lemma~\ref{Fu-Ch}(ii) does not hold in general.
	\begin{example}
	Consider the $2\times2$ positive semi-definite matrix $\mf b$ given by
	\beqn
	\mf b = 
	\begin{pmatrix}
		\frac{1}{2} &\frac{1}{\sqrt{6}} \\
		\frac{1}{\sqrt{6}} & \frac{2}{3} 
	\end{pmatrix}.
	\eeqn
It is easy to see that the eigenvalues of $\mf b$ are $\frac{1}{6}$ and $1.$  
Let $\{c_{l+2}\}_{l \Ge 1}$ be the constant sequence with value $1$ and let $\{d_{l+2}\}_{l \Ge 1}$ be the sequence given by $d_{l+2}=4^l,$ $l \Ge 1.$
Since $\sum_{l = 1}^{\infty} \frac{|c_{l+2}|^{2}}{d_{l+2}}$ converges to $\frac{1}{3},$ the 
matrix $\mf a$ given by
	\beqn 
	\begin{pmatrix}
		\mf b &\mf c  \\
	\mf	c^{*} & \mf d   
	\end{pmatrix}
	\eeqn
belongs to $\mathscr S_{2}.$ Since $\lambda_{\min}(\mf b)=1/6,$ we obtain $\mathcal s(\mf a) = \frac{1}{6} - \frac{2}{3} < 0.$ We check that the matrix $\mf a$ is positive semi definite.  
In view of the Schur complement lemma, it suffices to check that for every integer $j \Ge 1,$
$\mf g_{j} = \mf g^{(1)}_j \oplus \mf g^{(2)}_j$ is positive semi-definite, where 
\beqn 
	\mf g^{(1)}_j =  
	\begin{pmatrix}
		\frac{1}{2}-S_{j} &\frac{1}{\sqrt{6}}-S_{j}   \\
		\frac{1}{\sqrt{6}} -S_{j}& \frac{2}{3}-S_{j} 
	\end{pmatrix}, 
\eeqn
with $S_{j} = \sum_{i = 1}^{j} \frac{1}{4^{i}}$ and $\mf g^{(2)}_j$  is the diagonal matrix with diagonal entries $4, 4^2, \ldots, 4^j.$  
Note that $S_j = \frac{1}{3}(1 - \frac{1}{4^j}),$ and hence 
$S_{j} \in [\frac{1}{4},\frac{1}{3}).$ 
Let $\mathrm{Tr}$ and $\det$ denote the trace and determinant, respectively. 
Thus $$\mathrm{Tr}\,\mf g^{(1)}_j= \frac{1}{2} + \frac{2}{3} - 2 S_j > 0.$$ Moreover, since
	$S_j > \frac{1}{\sqrt{6}}\Big(1-\frac{1}{\sqrt{3}}\Big),$ we get
	\beqn
\det \,\mf g^{(1)}_j = (\frac{1}{2}-S_{j}) (\frac{2}{3}-S_{j}) - (\frac{1}{\sqrt{6}}-S_{j})^{2} \Ge  \frac{1}{18}- (\frac{1}{\sqrt{6}}-S_{j})^{2} > 0.
\eeqn
This shows that $\mf g_j$ is positive semi definite.  \hfill $\diamondsuit$
	\end{example}
	
We now exhibit a family of reproducing kernel Hilbert spaces associated with a non-diagonal Dirichlet series kernels (cf. \cite[Theorem~3.1]{AFMP}). 
	\begin{theorem} 
	Let $\mf a \in \mathscr S_k$ be such that $\mathcal s(\mf a)$ is positive. Assume that there exists a real number $\rho >1$ such that $\mf d$ satisfies 
\beq \label{growth-dl}
d_l  = \mathcal{O}(l^{\rho-1}), \quad l \Ge k+1.
\eeq
	Then $\kappa_{\mf a} : \mathbb H_{\rho} \times \mathbb H_{\rho} \rar \mathbb C$ is a positive semi-definite Dirichlet series kernel. Moreover, 
for every integer $n \Ge k+1,$ the analytic symbol $A_{n, \mf a}$ is a Dirichlet polynomial and 
	Dirichlet polynomials form a dense subspace of $\mathscr H_{\mf a}.$
	\end{theorem}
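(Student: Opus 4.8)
The plan is to take the three assertions in turn; the density statement requires the bulk of the work. \emph{Convergence and positive semi-definiteness.} First I would check that the double Dirichlet series $(s,u)\mapsto\kappa_{\mf a}(s,\bar u)$ converges absolutely on $\mathbb H_\rho\times\mathbb H_\rho.$ Splitting the double sum along the block decomposition of $\mf a,$ the $\mf b$-block gives a finite sum and the remaining pieces are $\big(\sum_{m=1}^{k}m^{-\Re(s)}\big)\big(\sum_{l\Ge 1}|c_{k+l}|(k+l)^{-\Re(u)}\big),$ its mirror image in $s$ and $u,$ and the diagonal sum $\sum_{l\Ge 1}d_{k+l}(k+l)^{-\Re(s)-\Re(u)}.$ Since $\Re(s)+\Re(u)>2\rho>\rho+1,$ the growth bound \eqref{growth-dl} makes the diagonal sum converge, while Cauchy--Schwarz with (C2) and \eqref{growth-dl} gives $\sum_{l\Ge 1}|c_{k+l}|(k+l)^{-\Re(u)}\Le\big(\sum_{l\Ge 1}|c_{k+l}|^{2}/d_{k+l}\big)^{1/2}\big(\sum_{l\Ge 1}d_{k+l}(k+l)^{-2\Re(u)}\big)^{1/2}<\infty$ because $\Re(u)>\rho>1.$ Hence $\kappa_{\mf a}$ is a Dirichlet series kernel on $\mathbb H_\rho\times\mathbb H_\rho$ (absolute convergence implies regular convergence), and since $\mathcal s(\mf a)>0,$ Lemma~\ref{Fu-Ch}(ii) makes $\mf a$ formally positive semi-definite, so Theorem~\ref{char-pdsk} gives that $\kappa_{\mf a}$ is positive semi-definite. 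Reading off the $(k+l)$-th column of $\mf a$ also yields at once $A_{k+l,\mf a}(s)=c_{k+l}\sum_{m=1}^{k}m^{-s}+d_{k+l}(k+l)^{-s}$ for $l\Ge 1,$ a Dirichlet polynomial, which is the second assertion.

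\emph{An auxiliary polynomial inside $\mathscr H_{\mf a}.$} Set $Q(s):=\sum_{m=1}^{k}m^{-s}.$ I claim $Q\in\mathscr H_{\mf a}.$ Apply Corollary~\ref{member} with $\hat f(m)=1$ for $m\Le k$ and $\hat f(m)=0$ otherwise: the matrix $\big(c^{2}a_{m,n}-\hat f(m)\overline{\hat f(n)}\big)_{m,n}$ is $c^{2}\mf a$ with the corner $k\times k$ block lowered by the all-ones matrix $\mf 1,$ call it $c^{2}\mf a-\mf 1\oplus 0.$ Choosing $c^{2}=k/\mathcal s(\mf a)$ and using $\mathcal s(\mf a)\Le\lambda_{\min}(\mf b),$ the block $c^{2}\mf b-\mf 1$ is positive semi-definite, since by the variant of Weyl's inequality used in Lemma~\ref{Fu-Ch} one has $\lambda_{\min}(c^{2}\mf b-\mf 1)\Ge c^{2}\lambda_{\min}(\mf b)-k\Ge c^{2}\mathcal s(\mf a)-k=0;$ thus $c^{2}\mf a-\mf 1\oplus 0\in\mathscr S_{k},$ and the same inequality gives $\mathcal s\big(c^{2}\mf a-\mf 1\oplus 0\big)\Ge c^{2}\mathcal s(\mf a)-k=0.$ By Lemma~\ref{Fu-Ch}(ii) this matrix is formally positive semi-definite, so $Q\in\mathscr H_{\mf a}$ by Corollary~\ref{member}. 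Comparing the expansions $Q(s)=\sum_{n\Ge 1}\inp{Q}{A_{n,\mf a}}n^{-s}$ of Theorem~\ref{total}(ii) with $Q(s)=\sum_{m=1}^{k}m^{-s}$ and invoking uniqueness of Dirichlet series, I get $\inp{Q}{A_{n,\mf a}}=1$ for $n\Le k$ and $\inp{Q}{A_{n,\mf a}}=0$ for $n\Ge k+1.$

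\emph{Density.} With $\sigma_{L}:=\sum_{l=1}^{L}|c_{k+l}|^{2}/d_{k+l},$ put $T_{L}:=\sum_{l=1}^{L}\overline{c_{k+l}}\,d_{k+l}^{-1}A_{k+l,\mf a}-\sigma_{L}\,Q\in\mathscr H_{\mf a};$ substituting the formula for $A_{k+l,\mf a}$ shows $T_{L}(s)=\sum_{l=1}^{L}\overline{c_{k+l}}(k+l)^{-s},$ a Dirichlet polynomial. Using \eqref{eq-gram}, which gives $\inp{A_{k+l,\mf a}}{A_{k+l',\mf a}}=d_{k+l}\delta_{l,l'},$ and $\inp{A_{k+l,\mf a}}{Q}=0,$ a direct computation gives, for $M>L,$ $\|T_{M}-T_{L}\|^{2}=(\sigma_{M}-\sigma_{L})\big(1+(\sigma_{M}-\sigma_{L})\|Q\|^{2}\big),$ which tends to $0$ by (C2). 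Hence $\{T_{L}\}$ converges in $\mathscr H_{\mf a};$ by Remark~\ref{rmk-H-k}(ii) and absolute convergence of $\sum_{l\Ge 1}\overline{c_{k+l}}(k+l)^{-s}$ on $\mathbb H_\rho,$ its limit $T$ equals $\sum_{l\Ge 1}\overline{c_{k+l}}(k+l)^{-s}.$ Being the norm-limit of Dirichlet polynomials lying in $\mathscr H_{\mf a},$ $T$ belongs to the closed subspace $\mathscr D$ of $\mathscr H_{\mf a}$ spanned by the Dirichlet polynomials it contains. For $n\Le k$ the element $A_{n,\mf a}-T$ is the Dirichlet polynomial $\sum_{m=1}^{k}b_{m,n}m^{-s},$ hence lies in $\mathscr H_{\mf a}$ and in $\mathscr D;$ together with the first paragraph this puts every $A_{n,\mf a}$ in $\mathscr D,$ and since $\{A_{n,\mf a}\}_{n\Ge 1}$ is total by Theorem~\ref{total}(iii) we conclude $\mathscr D=\mathscr H_{\mf a},$ i.e., the Dirichlet polynomials are dense.

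\emph{Main obstacle.} The delicate steps are those in the last two paragraphs: showing that the single polynomial $Q$ lies in $\mathscr H_{\mf a}$ (its membership is invisible from the block form and again hinges on the Weyl-type estimate behind the class $\mathscr S_{k}$), and then exploiting the orthogonality $\inp{A_{k+l,\mf a}}{Q}=0$ to obtain the exact cancellation that forces the infinite tail $\sum_{l}\overline{c_{k+l}}(k+l)^{-s}$ to converge in $\mathscr H_{\mf a}.$ The convergence of $\kappa_{\mf a}$ on $\mathbb H_\rho\times\mathbb H_\rho$ and the polynomial form of $A_{k+l,\mf a}$ are, by comparison, routine bookkeeping with the block structure, Lemma~\ref{Fu-Ch}(ii), and Theorem~\ref{char-pdsk}.
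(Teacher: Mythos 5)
Your proof is correct, and on the density step it follows a genuinely different route from the paper. The paper invokes Lemma~\ref{Fu-Ch}(iii) to perturb $\mf a$ by $\epsilon_m \mf e_m$ and, via Corollary~\ref{member}, places \emph{every} monomial $m^{-s}$ in $\mathscr H_{\mf a}$, so that $\mathcal D[s]\subseteq\mathscr H_{\mf a}$; it then appeals to totality of $\{A_{n,\mf a}\}$. You instead use Corollary~\ref{member} only once, for the single polynomial $Q$, via a Schur-complement/Weyl estimate on $c^2\mf a - \mf 1\oplus 0$, and then construct the tail $T=\sum_{l\Ge 1}\overline{c_{k+l}}(k+l)^{-s}$ as a norm limit of the polynomials $T_L$, using the Gram relations $\inp{A_{k+l,\mf a}}{A_{k+l',\mf a}}=d_{k+l}\delta_{l,l'}$ and $\inp{Q}{A_{k+l,\mf a}}=0$. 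This buys a more transparent justification of density than the paper's one-line appeal to totality: that appeal by itself does not explain why the non-polynomial symbols $A_{n,\mf a}$, $n\Le k$, belong to the closure of the polynomials, and your $T$-construction is exactly what is needed to close that gap. The one thing your argument does not make explicit is the containment $\mathcal D[s]\subseteq\mathscr H_{\mf a}$, which the theorem asserts and the paper proves. This is a one-line fix in your framework: since $\mathcal s(\mf a)>0$ forces $\lambda_{\min}(\mf b)>0$, the matrix $\mf b$ is invertible, so the polynomials $A_{n,\mf a}-T=\sum_{m=1}^k b_{m,n}m^{-s}$ with $n\Le k$ (which you already placed in $\mathscr H_{\mf a}$) span $\mathrm{span}\{m^{-s}: m\Le k\}$, while $(k+l)^{-s}=d_{k+l}^{-1}\big(A_{k+l,\mf a}-c_{k+l}Q\big)\in\mathscr H_{\mf a}$ for $l\Ge 1$; together these give $m^{-s}\in\mathscr H_{\mf a}$ for all $m$. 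The convergence and positive semi-definiteness part, and the identification of $A_{n,\mf a}$, $n\Ge k+1$, as polynomials, are essentially the same as in the paper, which packages the Cauchy--Schwarz step as the coefficient bound $|a_{m,n}|\Le C m^{\rho-1}n^{\rho-1}$ rather than treating each block separately.
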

	\begin{proof}
Note that for any integer $p \Ge k+1,$
	\beqn
	\frac{|c_p|^2}{d_p} \Le	 \sum_{l = 1}^{\infty} \frac{|c_{k+l}|^{2}}{d_{k+l}} = \frac{\lambda_{\min}(\mf b)-\mathcal s(\mf a)}{k} \Le \frac{\lambda_{\min}(\mf b)}{k}.
	\eeqn
Thus, we obtain
	\beqn
|c_{p}| \Le \frac{\sqrt{\lambda_{\min}(\mf b)}}{\sqrt{k}}\sqrt{d_p}, \quad p \Ge k+1.
\eeqn
	This combined with \eqref{growth-dl} shows that 
	\beqn
	|a_{m, n}| \Le  C \max \Big\{\sqrt{d_{m}}, ~d_{m}, ~\sqrt{d_{n}}, ~d_{n}\Big\} \Le C m^{\rho-1} n^{\rho-1}, \quad m, n \Ge k+1,
	\eeqn
where $C=\max \Big\{\frac{\sqrt{\lambda_{\min}(\mf b)}}{\sqrt{k}}, 1\Big\}.$	Thus, the Dirichlet series kernel $\kappa_{\mf a}$ converges absolutely on $\mathbb H_{\rho} \times \mathbb H_{\rho}.$
	By Lemma~\ref{Fu-Ch}, $\mf a$ is formally positive semi-definite, and hence by Theorem~\ref{char-pdsk}, $\kappa_{\mf a}$ is positive semi-definite.
	
By Lemma~\ref{Fu-Ch}(iii), for every integer $m \Ge 1,$ there exists $\epsilon_m > 0$ such that
$\mf a - \epsilon_m \mf e_{m}$ is formally positive semi-definite. An application of Corollary~\ref{member} now shows that $m^{-s} \in \mathscr H_{\mf a}$ for every integer $m \Ge 1.$ Thus, $\mathscr H_{\mf a}$ contains the space of Dirichlet polynomials. Moreover, by Theorem~\ref{total}, $A_{n, \mf a}$ belongs to $\mathscr H_{\mf a}$ for every integer $n \Ge 1.$ 
Note that for every integer $n \Ge k+1,$  $a_{m, n}=0$ for all $m \Ge n+1,$ and hence $A_{n, \mf a}$ is a Dirichlet polynomial. Since $\{A_{n, \mf a}\}_{n \Ge 1}$ is a total set in $\mathscr H_{\mf a}$ (see Theorem~\ref{total}), the Dirichlet polynomials are dense in $\mathscr H_{\mf a}.$
	\end{proof}

\section{Quasi-Invariance} \label{4}

Recall that Bergman type-kernels satisfy a natural transformation rule; that is, the quasi-invariance property (see Definition~\ref{def-qi}; refer to \cite{Mi} for more details).  In this section, we are primarily concerned in the study of positive semi-definite Dirichlet series kernels, which admit the quasi-invariance property. 
Some variants of the following notion appeared in \cite{GmGh} and \cite{KM}. 
\begin{definition} \label{def-qi}
	Let $\Omega$ be a domain and let $G$ be a group acting  on $\Omega.$ Let $\mathcal J: G \times \Omega \rar \mathbb{C}$ be a nowhere-vanishing function such that $\mathcal J(g, .)$ is holomorphic on $\Omega$ for every $g \in G.$ Let $\kappa: \Omega \times \Omega \rar \mathbb{C}$ be a positive semi-definite kernel. 
	\begin{enumerate}
	\item[$\bullet$] 	$\kappa$ is {\it $G$-invariant} if 
	\beqn
	 \kappa(g \cdot s, g \cdot u)  = \kappa(s,u) \quad s,u \in \Omega, \, g \in G.
	\eeqn
	\item[$\bullet$] 	$\kappa$ is {\it $G$-quasi-invariant with respect to $\mathcal J$} if 
	\beq\label{QU-DE}
	\mathcal J(g, s) \kappa(g \cdot s, g \cdot u) \overline{\mathcal J(g, u)} = \kappa(s,u) \quad s,u \in \Omega, \, g \in G.
	\eeq
\end{enumerate}		
	If $G=\mathrm{Aut}(\Omega),$ then we refer to the $G$-quasi-invariant kernel as the {\it quasi-invariant kernel}.
\end{definition}

 Before we state the classification theorem for quasi-invariant Dirichlet series kernels, we
determine the automorphism group of the right half plane $\mathbb H_\rho$ using that of the upper half-plane $\mathbb H.$ To see this, consider the conformal map $\eta_\rho: \mathbb H \rar \mathbb H_\rho$ given by $\eta_\rho(z) = -iz+\rho,$ $z \in \mathbb H,$ 
and let {\it $\text{SL}_2(\mathbb R)$} denote the group of $2 \times 2$ real matrices of determinant $1.$ For $A= {(\begin{smallmatrix}a & b \\ c & d \end{smallmatrix}}) \in$ {\it $\text{SL}_2(\mathbb R),$} and define $\phi_A : \mathbb H_\rho \rar \mathbb H_\rho$ by
\beqn
\phi_{A}(s) := \frac{a(s-\rho) - ib}{ic(s-\rho)+d} + \rho, \quad s \in \mathbb H_\rho.
\eeqn
Since $d + ic(s-\rho)=0$ if and only if $s = \frac{d}{c}i+\rho$ with $c \neq 0$ or $c=d=0,$ 
$\phi_A$ defines a biholomorphism on $\mathbb H_\rho.$ Thus, 
\beqn
\mathrm{Aut}(\mathbb H_\rho) = \{\eta_\rho \circ \psi \circ \eta^{-1}_\rho : \psi \in \mathrm{Aut}(\mathbb H)\}.
\eeqn
It is now easy to see using \cite[Theorem~2.4]{ShSt} that
\beqn
	\mathrm{Aut}(\mathbb{H_{\rho}}) = \left\{\phi_{A} :  A  \in {\it \text{SL}_{2}(\mathbb{R})} \right\}.
\eeqn
Let $\mathrm{Aut}_L(\mathbb H_\rho)$ denote the subgroup of $\mathrm{Aut}(\mathbb H_{\rho})$ (the ``linear'' automorphisms) given by
\beqn
\mathrm{Aut}_L(\mathbb H_\rho) = \{\phi_{A} :  A=({\begin{smallmatrix}a & b \\ 0 & d\end{smallmatrix}}) \in  {\it \text{SL}_2(\mathbb R)} \}.
\eeqn
The matrices $({\begin{smallmatrix}1 & b \\ 0 & 1\end{smallmatrix}}),$ $b \in \mathbb R,$ yield the so-called translation automorphisms$:$
\beq \label{phi-b}
\phi_{b}(s) = s - ib, \quad s \in \mathbb H_\rho.
\eeq
We denote by $\mathscr T,$ the group of all translation automorphisms of $\mathbb H_\rho.$
 
The quasi-invariant kernels on the unit disc are in abundance. In fact, the Bergman kernel $B_\Omega$ of any bounded domain $\Omega$ is quasi-invariant with respect to $$\mathcal J(\psi, z) = \psi'(z), \quad \psi \in \text{Aut}(\Omega), \, z \in \Omega$$ (see \cite[Proposition~1.4.12]{K}). In contrast to this, there are no quasi-invariant Dirichlet series kernels  in dimension bigger than $1.$ Indeed, we have the following$:$
\begin{theorem}\label{MA-TH} Let $\mathbf a = (a_{m, n})_{m,n = 1}^{\infty}$ be a matrix with complex entries and $\kappa_{\mf a}  : \mathbb H_{\rho} \times \mathbb H_{\rho} \rar \mathbb C$ be the Dirichlet series kernel with the coefficient matrix $\mf a.$ Then 
the following statements are equivalent$:$
\begin{enumerate}
\item[$(i)$] 	$\kappa_{\mf a}$ is quasi-invariant with respect to some $\mathcal J,$
\item[$(ii)$]  $\kappa_{\mf a}$ is $\mathrm{Aut}_L(\mathbb H_\rho)$-quasi-invariant with respect to some $\mathcal J,$
\item[$(iii)$] there exists a Dirichlet series $f:\mathbb H_\rho \rar \mathbb C$ such that 
	\beqn					
	\kappa_\mf a(s,u) =  f(s)\overline{f(u)}, \quad s, u \in \mathbb H_\rho,
\eeqn
where $f$ is either identically zero or nowhere-vanishing,
\item[$(iv)$] $\kappa_{\mathbf a}$ is positive semi-definite and the reproducing kernel Hilbert space $\mathscr H_{\mf a}$ associated with $\kappa_{\mf a}$ is either $\{0\}$ or spanned by a nowhere-vanishing Dirichlet series on $\mathbb H_\rho.$
\end{enumerate}	
\end{theorem}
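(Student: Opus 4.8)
The plan is to prove the cycle of implications $(i)\Rightarrow(ii)\Rightarrow(iii)\Rightarrow(iv)\Rightarrow(i)$, of which only $(ii)\Rightarrow(iii)$ requires real work. The implication $(i)\Rightarrow(ii)$ is trivial, since $\mathrm{Aut}_L(\mathbb H_\rho)\subseteq\mathrm{Aut}(\mathbb H_\rho)$. For $(iii)\Rightarrow(iv)$, if $\kappa_{\mf a}(s,u)=f(s)\overline{f(u)}$ with $f(s)=\sum_m\hat f(m)m^{-s}$, then by the uniqueness of double Dirichlet series (Lemma~\ref{D-S-rmk}(iii)) the coefficient matrix equals $\big(\hat f(m)\overline{\hat f(n)}\big)_{m,n}$, which is formally positive semi-definite; hence $\kappa_{\mf a}$ is positive semi-definite by Theorem~\ref{char-pdsk}, and $\mathscr H_{\mf a}=\bigvee\{\kappa_{\mf a,t}:t\in\mathbb H_\rho\}=\bigvee\{\overline{f(t)}\,f:t\in\mathbb H_\rho\}$ is $\{0\}$ when $f\equiv0$ and $\mathbb C f$ otherwise. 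For $(iv)\Rightarrow(i)$, if $\mathscr H_{\mf a}=\{0\}$ then $\kappa_{\mf a}\equiv0$ and $\mathcal J\equiv1$ works; if $\mathscr H_{\mf a}=\mathbb C g$ with $g$ a nowhere-vanishing Dirichlet series, then \eqref{rp} gives $\kappa_{\mf a}(s,u)=g(s)\overline{g(u)}/\|g\|^2$, and with $f=g/\|g\|$ the multiplier $\mathcal J(\psi,s):=f(s)/f(\psi\cdot s)$ is holomorphic and nowhere-vanishing on $\mathbb H_\rho$ and satisfies \eqref{QU-DE}.

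For $(ii)\Rightarrow(iii)$ I would first note that $\kappa_{\mf a}$ is positive semi-definite (it is so by the very meaning of quasi-invariance), so $\mathscr H_{\mf a}$ is defined, and we may assume $\kappa_{\mf a}\not\equiv0$ (otherwise take $f\equiv0$). Since $\mathrm{Aut}_L(\mathbb H_\rho)$ contains the dilations $h_a(s)=a^2(s-\rho)+\rho$ ($a>0$) and the translations $\phi_b$, it acts transitively on $\mathbb H_\rho$. If $\kappa_{\mf a}(s_0,s_0)=0$ for some $s_0$, then $\kappa_{\mf a,s_0}=0$, and applying \eqref{QU-DE} with $u=s_0$ together with the nowhere-vanishing of $\mathcal J(g,\cdot)$ forces $\kappa_{\mf a,g\cdot s_0}=0$ for every $g\in\mathrm{Aut}_L(\mathbb H_\rho)$, hence $\kappa_{\mf a}\equiv0$ by transitivity --- a contradiction. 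So $\kappa_{\mf a}(s,s)=\|\kappa_{\mf a,s}\|^2>0$ for all $s$.

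The crux is to deduce $\dim\mathscr H_{\mf a}=1$. Fix $a>0$ with $a^2$ an algebraic irrational (say $a^2=\sqrt{2}$), put $h:=h_a$ and $J:=\mathcal J(h,\cdot)$. Writing $\mathscr H_{\mf a}$ in a countable orthonormal basis $\{e_j\}$ (it is separable, Remark~\ref{rmk-H-k}(i)), each $e_j$ a Dirichlet series by Theorem~\ref{total}, identity \eqref{QU-DE} becomes $\sum_j(Te_j)(s)\overline{(Te_j)(u)}=\kappa_{\mf a}(s,u)$ where $Tf:=J\cdot(f\circ h)$; thus $\{Te_j\}$ is a Parseval frame for $\mathscr H_{\mf a}$, and comparing point evaluations one gets $Tf=\sum_j\langle f,e_j\rangle\,Te_j\in\mathscr H_{\mf a}$ for every $f\in\mathscr H_{\mf a}$, with $T$ injective (since $J$ is nowhere-vanishing and $h$ is onto). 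In particular $J\cdot(f\circ h)$ is a Dirichlet series for each $f\in\mathscr H_{\mf a}$. Assume $\dim\mathscr H_{\mf a}\ge2$ and pick linearly independent $f,g\in\mathscr H_{\mf a}$; then $Tf,Tg$ are linearly independent Dirichlet series and $Tf(s)/Tg(s)=f(h(s))/g(h(s))$. For a nonzero $\psi=\sum_k\hat\psi(k)k^{-s}\in\mathscr H_{\mf a}$, absolute convergence on $\mathbb H_{\rho+1}$ (Remark~\ref{D-S-rmk-0}) and the estimate \cite[Lemma~11.1]{Ap} give $\psi(\sigma)\sim\hat\psi(k_\psi)\,k_\psi^{-\sigma}$ as $\sigma\to+\infty$, where $k_\psi:=\min\{k:\hat\psi(k)\neq0\}$. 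Letting $\sigma\to+\infty$ on the real axis in $Tf(\sigma)/Tg(\sigma)=f(h(\sigma))/g(h(\sigma))$ and matching the exponential rates forces
\[
\frac{k_{Tf}}{k_{Tg}}=\Big(\frac{k_f}{k_g}\Big)^{a^2}.
\]
The left side is a positive rational; so if $k_f\neq k_g$, then $k_f/k_g$ is algebraic and different from $0$ and $1$, whence $(k_f/k_g)^{a^2}$ is transcendental by the Gelfond--Schneider theorem (Hilbert's seventh problem; \cite{Ni,Tu}) --- a contradiction. Hence $k_f=k_g$: any two linearly independent elements of $\mathscr H_{\mf a}$ share the same least index. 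But then, fixing $f$ with least index $k_*$ and $g$ independent of it, the vector $g-\dfrac{\hat g(k_*)}{\hat f(k_*)}\,f\in\mathscr H_{\mf a}$ is nonzero, independent of $f$, and has least index $>k_*$ --- impossible. Therefore $\dim\mathscr H_{\mf a}=1$, and taking a unit vector $e\in\mathscr H_{\mf a}$, the reproducing property gives $\kappa_{\mf a}(s,u)=e(s)\overline{e(u)}$ with $e$ a nowhere-vanishing (since $|e(s)|^2=\kappa_{\mf a}(s,s)>0$) Dirichlet series, so $(iii)$ holds.

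The main obstacle is this last extraction of rigidity from $\mathrm{Aut}_L$-quasi-invariance. The translation subgroup alone is too weak --- translation-invariant non-factored Dirichlet series kernels exist, e.g.\ $\kappa(s,u)=1+2^{-s}\,\overline{2^{-u}}$ --- and it is the dilations $h_a$, which rescale the exponents $\log m$ to $a^2\log m$, that obstruct non-factorability, with Gelfond--Schneider precisely ruling out an accidental coincidence of rescaled exponents with logarithms of integers once $a^2$ is taken algebraic irrational. Care will be needed to check that $f\mapsto\mathcal J(h,\cdot)(f\circ h)$ is a well-defined injective operator on $\mathscr H_{\mf a}$ --- so that $\mathcal J(h,\cdot)(f\circ h)$ is a Dirichlet series for \emph{every} $f$, not only for basis vectors --- and to justify the term-by-term asymptotics of Dirichlet series and of their dilates as $\sigma\to+\infty$.
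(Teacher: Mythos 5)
Your proposal is correct, and the pivotal step $(ii)\Rightarrow(iii)$ is a genuinely different argument from the paper's, even though both hinge on the Gelfond--Schneider theorem. The paper applies $U_{\psi,\mathcal J}$ to the analytic symbols $A_{i,\mf a}, A_{j,\mf a}$ and uses the cancellation identity $U_{\psi,\mathcal J}(A_{i,\mf a})\,(A_{j,\mf a}\circ\psi^{-1})=U_{\psi,\mathcal J}(A_{j,\mf a})\,(A_{i,\mf a}\circ\psi^{-1})$; choosing the dilation $\psi^{-1}(s)=\omega(s-\rho)+\rho$ with $\omega$ algebraic irrational, Gelfond--Schneider (via Lemma~\ref{Gelfond-S} and Lemma~\ref{PR-GE-DI}) guarantees injectivity of $(m,n)\mapsto\log m+\omega\log n$, so the two sides can be compared as general Dirichlet series coefficient by coefficient, yielding $c_k a_{i,j}=d_k a_{i,i}$ and hence pairwise linear dependence of the (total) analytic symbols. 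You instead work with arbitrary independent $f,g\in\mathscr H_{\mf a}$, compare leading-order asymptotics at $\sigma\to+\infty$ of $Tf/Tg$ and $(f\circ h)/(g\circ h)$ using the least-index behaviour $\psi(\sigma)\sim\hat\psi(k_\psi)k_\psi^{-\sigma}$, and deduce $k_{Tf}/k_{Tg}=(k_f/k_g)^{a^2}$; Gelfond--Schneider then forces $k_f=k_g$, and the elementary reduction-of-least-index argument finishes. Your route has the appeal of being coordinate-free (no analytic symbols needed) and of extracting the contradiction from a single numerical identity rather than a full coefficient recurrence. Two minor remarks: you could simply cite Theorem~\ref{CH-QU} to get that $T=U_{h^{-1},\mathcal J}$ is unitary on $\mathscr H_{\mf a}$ (which in particular gives $Tf\in\mathscr H_{\mf a}$ and injectivity), instead of re-deriving this through the Parseval-frame computation; and your nowhere-vanishing argument for $e$ uses only the diagonal $\kappa_{\mf a}(s,s)>0$, which is obtained a bit more directly than the paper's Lemma~\ref{EX-PR}, though both are valid. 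The remaining implications $(i)\Rightarrow(ii)$, $(iii)\Rightarrow(iv)$, $(iv)\Rightarrow(i)$ match the paper in substance, the last two being the paper's $(iii)\Leftrightarrow(iv)$ and $(iii)\Rightarrow(i)$ reorganized into a cycle.
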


In order to prove Theorem~\ref{MA-TH}, we need several lemmas. The first one asserts that 
any $\mathrm{Aut}_L(\mathbb H_\rho)$-quasi-invariant Dirichlet series kernel is either nowhere-vanishing or identically zero (cf. \cite[Theorem~11.4]{Ap}).
\begin{lemma}\label{EX-PR} 
	Let $\kappa_{\mf a}: \mathbb H_{\rho} \times \mathbb H_{\rho} \rar \mathbb{C}$ be a Dirichlet series kernel with the coefficient matrix $\mf a=(a_{m,n})_{m,n = 1}^{\infty}.$ Assume that $\kappa_{\mf a}$ is a non-zero function. Then the following statements are valid$:$
\begin{enumerate}
\item[$(i)$] there exists a real number $r > \rho+1$ such that $\kappa_{\mf a}$ never vanishes on $\mathbb H_{r} \times \mathbb H_{r},$
 \item[$(ii)$] if $\kappa_{\mf a}$ is $\mathrm{Aut}_L(\mathbb H_\rho)$-quasi-invariant with respect to some $\mathcal J,$ then $\kappa_\mf a$ is nowhere-vanishing.
\end{enumerate}	
\end{lemma}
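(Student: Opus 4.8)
\textbf{Plan for Lemma~\ref{EX-PR}.}

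For part (i), the natural strategy is to exploit the behavior of $\kappa_{\mf a}$ near infinity. Since $\kappa_{\mf a}$ is a Dirichlet series kernel, the double Dirichlet series $(s,u)\mapsto \kappa_{\mf a}(s,\bar u)$ is regularly convergent on $\mathbb H_\rho\times\mathbb H_\rho$, hence by Lemma~\ref{D-S-rmk}(i) it is absolutely convergent on $\mathbb H_{\rho+1}\times\mathbb H_{\rho+1}$. Let $a_{m_0,n_0}$ be the first nonzero coefficient in some fixed ordering of the double-indexed sequence $(a_{m,n})$ — more precisely, choose the lexicographically minimal pair $(m_0, n_0)$ (equivalently minimize $m_0 n_0$, breaking ties) with $a_{m_0,n_0}\neq 0$; since $\mathbf a$ is self-adjoint when $\kappa_{\mf a}$ is positive semi-definite one can even arrange $m_0=n_0$, but I will not need that. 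Then write
\[
\kappa_{\mf a}(s,u) = m_0^{-s} n_0^{-\bar u}\Big(a_{m_0,n_0} + \big(m_0^{s} n_0^{\bar u}\kappa_{\mf a}(s,u) - a_{m_0,n_0}\big)\Big),
\]
and use the vanishing-at-infinity estimates analogous to \eqref{vanish-infty-1} (applied to the tail series, after peeling off the finitely many ``earlier'' terms, all of which have modulus going to $0$ relative to $m_0^{-\Re s}n_0^{-\Re u}$) to conclude that $m_0^{s}n_0^{\bar u}\kappa_{\mf a}(s,u)\to a_{m_0,n_0}\neq 0$ as $\Re(s),\Re(u)\to\infty$. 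Hence there is $r>\rho+1$ such that the parenthetical factor has modulus $\geq |a_{m_0,n_0}|/2>0$ on $\mathbb H_r\times\mathbb H_r$, and since $m_0^{-s}n_0^{-\bar u}$ never vanishes, $\kappa_{\mf a}$ never vanishes there. This is essentially the same device used in Step~1 of Lemma~\ref{grammian}, so it should go through cleanly; the main care is handling the finitely many intermediate coefficients, but each contributes a term $o(m_0^{-\Re s}n_0^{-\Re u})$ by the elementary inequality $(m/m_0)^{-\Re s}\to 0$ when $m>m_0$.

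For part (ii), I would argue by analytic continuation of the non-vanishing. Suppose $\kappa_{\mf a}$ is $\mathrm{Aut}_L(\mathbb H_\rho)$-quasi-invariant with respect to $\mathcal J$, and let $Z=\{(s,u)\in\mathbb H_\rho\times\mathbb H_\rho : \kappa_{\mf a}(s,u)=0\}$. The transformation rule \eqref{QU-DE} with a linear automorphism $\phi_A$, $A=({\begin{smallmatrix}a&b\\0&d\end{smallmatrix}})$, reads $\mathcal J(\phi_A,s)\,\kappa_{\mf a}(\phi_A\cdot s,\phi_A\cdot u)\,\overline{\mathcal J(\phi_A,u)}=\kappa_{\mf a}(s,u)$; since $\mathcal J$ is nowhere-vanishing, $(s,u)\in Z$ if and only if $(\phi_A\cdot s,\phi_A\cdot u)\in Z$, i.e. $Z$ is invariant under the diagonal action of $\mathrm{Aut}_L(\mathbb H_\rho)$ on $\mathbb H_\rho\times\mathbb H_\rho$. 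Now $\mathrm{Aut}_L(\mathbb H_\rho)$ contains the real dilations $s\mapsto a^2(s-\rho)+\rho$ (taking $d=1/a$, $b=0$) together with the translations $\phi_b$ from \eqref{phi-b}; these act transitively enough that the orbit of any point $(s_1,u_1)$ meets $\mathbb H_r\times\mathbb H_r$ — indeed, given $(s_1,u_1)$ with $\Re(s_1)>\rho$, applying a large real dilation centered at $\rho$ pushes both real parts past $r$ simultaneously while keeping the point in $\mathbb H_\rho\times\mathbb H_\rho$. By part (i), $Z$ is disjoint from $\mathbb H_r\times\mathbb H_r$, and by invariance $Z$ is therefore empty. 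Hence $\kappa_{\mf a}$ is nowhere-vanishing.

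\textbf{Main obstacle.} The delicate point is part (i): making the estimate that peels off all coefficients $a_{m,n}$ preceding $(m_0,n_0)$ — which are zero — and controlling the genuine tail. Strictly, once $(m_0,n_0)$ is the minimal index with a nonzero coefficient, every term with $(m,n)\neq(m_0,n_0)$ either has $m>m_0$ or $n>n_0$ (or both), so $m_0^{\Re s}n_0^{\Re u}m^{-\Re s}n^{-\Re u}\leq (m_0/m)^{\Re s}+(n_0/n)^{\Re u}$ times the appropriate factors, and summing against $|a_{m,n}|$ using absolute convergence on $\mathbb H_{\rho+1}\times\mathbb H_{\rho+1}$ (Lemma~\ref{D-S-rmk}(i)) gives a bound that tends to $0$ as $\Re(s),\Re(u)\to\infty$ — this is the direct analogue of \eqref{vanish-infty-1}. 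A secondary subtlety in part (ii) is verifying that the diagonal $\mathrm{Aut}_L(\mathbb H_\rho)$-orbit of an arbitrary point indeed enters $\mathbb H_r\times\mathbb H_r$; the real-dilation argument above handles this, but one should state it carefully since not every automorphism of $\mathbb H_\rho$ enlarges real parts, only the appropriate one-parameter subgroup does.
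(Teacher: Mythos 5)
Your proof of part (ii) is essentially the paper's argument: push a hypothetical zero to infinity with the dilations $\psi_a(s)=a^2(s-\rho)+\rho$ and contradict part (i). That portion is fine.

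There is, however, a genuine gap in your proof of part (i). You want to choose a pair $(m_0,n_0)$ with $a_{m_0,n_0}\neq 0$ so that $m_0^{s}n_0^{\overline u}\kappa_{\mf a}(s,u)\to a_{m_0,n_0}$ as $\Re(s),\Re(u)\to\infty$. For the peeling argument to work, you need \emph{every} nonzero coefficient $a_{m,n}$ with $(m,n)\neq(m_0,n_0)$ to satisfy $m\ge m_0$ \emph{and} $n\ge n_0$, since the factor $(m_0/m)^{\Re(s)}(n_0/n)^{\Re(u)}$ blows up whenever one of the ratios exceeds $1$ and the corresponding real part is taken large independently. Your claim that ``$m>m_0$ or $n>n_0$'' is enough, together with the inequality $(m_0/m)^{\Re s}(n_0/n)^{\Re u}\le(m_0/m)^{\Re s}+(n_0/n)^{\Re u}$, does not save the argument: if $m>m_0$ but $n<n_0$ then $(n_0/n)^{\Re u}\to\infty$, and so does your claimed bound. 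Moreover, neither the lexicographic minimum nor the $m_0n_0$-minimum (which, incidentally, are not equivalent choices) produces a simultaneous minimum in both coordinates; such a pair may simply fail to exist for an arbitrary coefficient matrix. A concrete obstruction: $a_{1,2}=a_{2,1}=1$ and all other entries zero gives $\kappa_{\mf a}(s,u)=2^{-\overline u}+2^{-s}$, which vanishes whenever $s-\overline u=i\pi/\log 2$, i.e.\ at points with arbitrarily large and equal real parts --- so no choice of $(m_0,n_0)$ can make your peeling work for this $\mf a$.

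The way to close the gap is to use the positive semi-definiteness of $\kappa_{\mf a}$, which is the setting in which the lemma is actually applied (Theorem~\ref{MA-TH}) and in which part (ii) already lives. If $\kappa_{\mf a}$ is positive semi-definite, then $\mf a$ is formally positive semi-definite by Theorem~\ref{char-pdsk}, and in a positive semi-definite matrix a vanishing diagonal entry $a_{k,k}=0$ forces the entire $k$-th row and column to vanish. Letting $m_0$ be the smallest integer with $a_{m_0,m_0}\neq 0$, you then get $a_{m,n}=0$ whenever $m<m_0$ or $n<m_0$, so $\kappa_{\mf a}$ is exactly the tail $\sum_{m,n\ge m_0}a_{m,n}m^{-s}n^{-\overline u}$ and \eqref{vanish-infty-1} with $k=l=m_0$ applies directly. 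Your parenthetical remark ``one can even arrange $m_0=n_0$, but I will not need that'' is therefore exactly backwards: you do need it, and it is the positive semi-definiteness that supplies it. For comparison, the paper's own proof of (i) argues by contradiction that \emph{all} $a_{i,j}$ vanish via \eqref{vanish-infty-1}; it is likewise terse about the same convergence subtlety, but it does not rest on an incorrect minimality device the way your version does.
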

\begin{proof}
(i):	Suppose that the conclusion is not true. Thus, there exists a sequence $\{(s_{n}, u_{n})\}_{n=1}^{\infty} \subseteq \mathbb H_{\rho} \times \mathbb H_{\rho}$ such that $\{\Re(s_{n})\}_{n=1}^{\infty}, \{\Re(u_{n})\}_{n=1}^{\infty}$ are unbounded and 
	\beq
	\label{vani-seq}
	\kappa_{\mf a}(s_{n}, u_{n}) = 0, \quad n \Ge 1. 
	\eeq
	Since $\kappa_{\mf a}$ is absolutely convergent on $\mathbb H_{\rho +1} \times \mathbb H_{\rho +1},$	an application of \eqref{vanish-infty-1} shows that for any positive integers $i$ and $j,$
	\beqn
	a_{i,j} = \lim_{n \rar \infty} i^{s_{n}}j^{u_{n}}\kappa_{\mf a}(s_{n}, u_{n}) \overset{\eqref{vani-seq}}=0.
	\eeqn
	This shows that $\mf a =0.$

(ii): Suppose that $\kappa_\mf a$ vanishes at some point $(s_0, u_0) \in \mathbb H_\rho \times \mathbb H_\rho.$
	Since $s \rar \mathcal J(\psi, s)$ is a nowhere-vanishing function, by \eqref{QU-DE}, 
\beq \label{zero-kappa}
\kappa_\mf a(\psi_a(s_0), \psi_a(u_0)) = 0, \quad a \in \mathbb R \backslash \{0\},
\eeq 
 where $\psi_a(s) = a^2(s-\rho)+\rho,$ $s \in \mathbb H_\rho.$ 
Since 
$\lim_{a \rar \infty}\psi_a(s) = \infty$ for any $s \in \mathbb H_\rho,$ 
for every $\rho' \Ge \rho,$ there exists $a \in \mathbb R\backslash \{0\}$ such that $\psi_a(s_0), \psi_a(u_0) \in \mathbb H_{\rho'}.$ This combined with \eqref{zero-kappa} implies that 
$\kappa_{\mf a}$ admits a zero in $\mathbb H_{\rho'} \times \mathbb H_{\rho'}$ for every $\rho' \Ge \rho.$ In view of (i), this is not possible, and hence we get (ii).
\end{proof}

We also need a solution to the Hilbert's seventh problem in the proof of Theorem~\ref{MA-TH} (see \cite[Chapter~10]{Ni} and \cite{Tu} for more details).
\begin{lemma} \label{Gelfond-S}
	Let $\omega$ be an algebraic and irrational real number, and let $\psi: \mathbb N \times \mathbb N \rar \mathbb R$ be a function defined by $$\psi(m,n) = \log(mn^{\omega}), \quad m, n \in \mathbb N.$$ Then $\psi$ is an injective function and the range of $\psi$ is a countably infinite subset of $\mathbb R.$
\end{lemma}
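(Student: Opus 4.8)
The plan is to prove the two assertions of Lemma~\ref{Gelfond-S} separately, with injectivity being the real content and the range statement being essentially immediate once injectivity is in hand.

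\textbf{Injectivity.} Suppose $\psi(m,n) = \psi(m',n')$ for some $(m,n),(m',n') \in \mathbb N \times \mathbb N$, i.e. $\log(m n^{\omega}) = \log(m' (n')^{\omega})$. Exponentiating, this gives $m n^{\omega} = m' (n')^{\omega}$. If $n = n'$ then clearly $m = m'$ and we are done, so assume $n \neq n'$; after relabelling we may take $n > n' \geqslant 1$, so in particular $n \geqslant 2$. Rearranging, $(n/n')^{\omega} = m'/m$, a positive rational number. Now I would split according to whether $n' = 1$ or $n' \geqslant 2$. If $n' \geqslant 2$ (so $n/n'$ is a positive rational that is neither $0$ nor $1$), then writing $r = n/n' \in \mathbb Q \setminus \{0,1\}$ and noting $r > 0$, the relation $r^{\omega} \in \mathbb Q$ with $\omega$ algebraic irrational would contradict the Gelfond--Schneider theorem: indeed $r^{\omega} = \exp(\omega \log r)$, and Gelfond--Schneider asserts that for algebraic $\alpha \notin \{0,1\}$ and algebraic irrational $\beta$, the number $\alpha^{\beta}$ is transcendental, whereas here $\alpha = r$ and $\beta = \omega$ are algebraic with $r \notin \{0,1\}$, $\omega$ irrational, forcing $r^{\omega}$ transcendental — but it equals the rational $m'/m$, a contradiction. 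If instead $n' = 1$, then $n^{\omega} = m'/m \in \mathbb Q$ with $n \geqslant 2$, and again $n \notin \{0,1\}$ is algebraic and $\omega$ is algebraic irrational, so Gelfond--Schneider makes $n^{\omega}$ transcendental, contradiction. Hence $n = n'$ and therefore $m = m'$, proving $\psi$ is injective.

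\textbf{The range.} The domain $\mathbb N \times \mathbb N$ is countably infinite, so $\psi$ being injective immediately gives that its range is a countably infinite subset of $\mathbb R$ (injectivity rules out the range being finite, and it is a subset of a countable set hence at most countable).

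\textbf{Main obstacle.} There is no serious obstacle here — the only delicate point is arranging the case analysis so that in every subcase the base of the exponential is an algebraic number different from $0$ and $1$, which is exactly what is needed to invoke Gelfond--Schneider; the case $n' = 1$ (where the quotient $n/n'$ reduces to the integer $n$) must be handled separately from $n' \geqslant 2$ only to keep this cleanly visible, and it is worth noting that $\omega$ irrational is used precisely to guarantee $\beta = \omega \notin \mathbb Q$ in the statement of the theorem. The rest is bookkeeping.
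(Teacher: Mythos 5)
Your proof is correct and follows essentially the same route as the paper: reduce the equality $\psi(m,n)=\psi(m',n')$ to $(n/n')^{\omega}\in\mathbb Q$ and invoke the Gelfond--Schneider theorem with the rational base $n/n'\notin\{0,1\}$ and the algebraic irrational exponent $\omega$. The only difference is your case split on $n'=1$ versus $n'\geqslant 2$, which is unnecessary (in both cases $n/n'$ is a rational different from $0$ and $1$, which is all Gelfond--Schneider requires) but harmless.
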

\begin{proof}
	If $\psi(m, n) = \psi(p, q)$ for some $(m,n), (p,q) \in \mathbb N \times \mathbb N,$ then since logarithmic function on $(0, \infty)$ is injective, we obtain 
	\beq \label{CON}
	\frac{m}{p} = \Big(\frac{q}{n}\Big)^{\omega}.
	\eeq
Recall that the Gelfond-Schneider Theorem (see \cite[Theorem~10.1]{Ni}) asserts that  if $a$ and $b$ are algebraic numbers with $a \neq 0,$ $a \neq 1,$ and if $b$ is not a real rational number, then any value of $a^b$ is transcendental.
If $q \neq n,$ then applying this theorem to $a = \frac{q}{n} \notin \{0, 1\}$ and $b=\omega,$ we obtain	 that $\big(\frac{q}{n}\big)^{\omega}$ is a transcendental number, which contradicts \eqref{CON}. Thus $q = n,$ and hence by another application of 
\eqref{CON},
$m=p,$ which proves that $\psi$ is injective. The remaining part is now clear since $\mathbb N \times \mathbb N$ is countably infinite. 
\end{proof}

 We also need a characterization of $G$-quasi-invariant kernels $\kappa$ with respect to some $\mathcal J : G \times \Omega \rar \mathbb{C}.$ To state this result, consider the map $U_{g, \mathcal J}$ given by 
\beq \label{U-psi-J} 
U_{g, \mathcal J}(f)(s) = \mathcal J(g^{-1}, s) f(g^{-1} \cdot s), \quad f \in \text{Hol}(\Omega),
\eeq 
where $g \in G$ is fixed but arbitrary.  
 For the case of $\Omega = \mathbb D$ and $G = \mathrm{Aut}(\mathbb D),$ this characterization appears in \cite[Proposition~2.1]{KM}, and for the general bounded domain $\Omega$ with $G = \mathrm{Aut}(\Omega),$ this is stated in \cite[Proposition 6.6]{GmGh}. The proof given in \cite{KM} extends naturally to even unbounded domains. 
 
\begin{theorem} \label{CH-QU} 
Let $\Omega$ be a domain and let $G$ be a group acting  on $\Omega.$
Let $\kappa: \Omega \times \Omega \rar \mathbb{C}$ be a positive semi-definite kernel and let $\mathscr H(\kappa)$ be the reproducing kernel Hilbert space associated with $\kappa.$
	Then $\kappa$ is $G$-quasi-invariant with respect to $\mathcal J$ if and only if the linear map $U_{g, \mathcal J},$ as given in \eqref{U-psi-J}, is unitary on $\mathscr H(\kappa)$ for all $g \in G.$
\end{theorem}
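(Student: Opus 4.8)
The plan is to prove Theorem~\ref{CH-QU} by a direct computation linking the quasi-invariance relation \eqref{QU-DE} to the action of $U_{g, \mathcal J}$ on the kernel sections $\kappa_u := \kappa(\cdot, u)$. First I would recall that $\{\kappa_u : u \in \Omega\}$ spans a dense subspace of $\mathscr H(\kappa)$ and that $\inp{f}{\kappa_u} = f(u)$ for $f \in \mathscr H(\kappa)$. The central observation is the identity
\beqn
U_{g, \mathcal J}\big(\overline{\mathcal J(g^{-1}, u)}\,\kappa_{g^{-1}\cdot u}\big) = \kappa_u, \quad u \in \Omega,\, g \in G,
\eeqn
which I would verify by evaluating both sides at an arbitrary $s \in \Omega$: the left side equals $\mathcal J(g^{-1}, s)\,\overline{\mathcal J(g^{-1}, u)}\,\kappa(g^{-1}\cdot s, g^{-1}\cdot u)$, and this equals $\kappa(s, u)$ precisely because of \eqref{QU-DE} applied with the group element $g^{-1}$ (after replacing $s, u$ by $g^{-1}\cdot s, g^{-1}\cdot u$ in \eqref{QU-DE} and using that the action is by the group $G$, so $g^{-1} \in G$). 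Note that $U_{g, \mathcal J}$ maps $\text{Hol}(\Omega)$ into itself since $\mathcal J(g^{-1}, \cdot)$ is holomorphic and nowhere-vanishing and $g^{-1}$ acts holomorphically on $\Omega$.

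Next, for the forward implication I would assume $\kappa$ is $G$-quasi-invariant with respect to $\mathcal J$ and show each $U_{g, \mathcal J}$ is unitary on $\mathscr H(\kappa)$. Using the displayed identity and the reproducing property, for $u, v \in \Omega$,
\beqn
\big\langle U_{g, \mathcal J}(\overline{\mathcal J(g^{-1}, u)}\,\kappa_{g^{-1}\cdot u}),\, U_{g, \mathcal J}(\overline{\mathcal J(g^{-1}, v)}\,\kappa_{g^{-1}\cdot v})\big\rangle = \inp{\kappa_u}{\kappa_v} = \kappa(v, u),
\eeqn
while on the other hand
\beqn
\big\langle \overline{\mathcal J(g^{-1}, u)}\,\kappa_{g^{-1}\cdot u},\, \overline{\mathcal J(g^{-1}, v)}\,\kappa_{g^{-1}\cdot v}\big\rangle = \overline{\mathcal J(g^{-1}, u)}\,\mathcal J(g^{-1}, v)\,\kappa(g^{-1}\cdot v, g^{-1}\cdot u),
\eeqn
and these two quantities agree, again by \eqref{QU-DE}. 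Hence $U_{g, \mathcal J}$ is an isometry on the dense subspace $\bigvee\{\kappa_u : u \in \Omega\}$ spanned by the (rescaled) kernel sections, so it extends to an isometry on $\mathscr H(\kappa)$; moreover the displayed identity shows its range contains every $\kappa_u$, hence is dense, so this isometry is onto. Applying the same argument to $g^{-1}$ in place of $g$ (and checking $U_{g^{-1}, \mathcal J} = U_{g, \mathcal J}^{-1}$ on kernel sections, which follows from the cocycle-type behaviour forced by \eqref{QU-DE}) confirms $U_{g, \mathcal J}$ is unitary.

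For the converse, I would assume $U_{g, \mathcal J}$ is unitary on $\mathscr H(\kappa)$ for every $g \in G$ and recover \eqref{QU-DE}. From unitarity, $\inp{U_{g, \mathcal J} f}{U_{g, \mathcal J} h} = \inp{f}{h}$ for all $f, h \in \mathscr H(\kappa)$; specializing to $f = \kappa_u$, $h = \kappa_v$ and expanding the left side using the definition \eqref{U-psi-J} together with the reproducing property, one computes
\beqn
\inp{U_{g, \mathcal J}\kappa_u}{U_{g, \mathcal J}\kappa_v} = \mathcal J(g^{-1}, ?)\cdots,
\eeqn
but it is cleaner to test against the reproducing kernel: since $U_{g, \mathcal J}$ is unitary, $U_{g, \mathcal J}^* = U_{g, \mathcal J}^{-1}$, and one identifies $U_{g, \mathcal J}^* \kappa_u = \overline{\mathcal J(g^{-1}, u)}\,\kappa_{g^{-1}\cdot u}$ by checking $\inp{f}{U_{g, \mathcal J}^*\kappa_u} = \inp{U_{g, \mathcal J}f}{\kappa_u} = (U_{g, \mathcal J}f)(u) = \mathcal J(g^{-1}, u)f(g^{-1}\cdot u)$ for all $f \in \mathscr H(\kappa)$, which equals $\inp{f}{\overline{\mathcal J(g^{-1}, u)}\,\kappa_{g^{-1}\cdot u}}$. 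Then $\inp{\kappa_u}{\kappa_v} = \inp{U_{g, \mathcal J}^*\kappa_u}{U_{g, \mathcal J}^*\kappa_v}$ unwinds exactly to \eqref{QU-DE} after relabelling $g^{-1}\cdot s \mapsto s$, $g^{-1}\cdot u \mapsto u$ and using surjectivity of the $G$-action.

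The main obstacle I anticipate is bookkeeping with the group action and the placement of $g$ versus $g^{-1}$: the operator $U_{g, \mathcal J}$ is defined with $g^{-1}$ so that $g \mapsto U_{g, \mathcal J}$ becomes a representation rather than an anti-representation, and one must be careful that \eqref{QU-DE} holds for \emph{all} $g \in G$ (in particular for $g^{-1}$), and that the substitution $s \mapsto g \cdot s$ is a bijection of $\Omega$. A secondary subtlety is ensuring $U_{g, \mathcal J}$ genuinely maps $\mathscr H(\kappa)$ into itself before one can speak of its unitarity — but this is exactly what the kernel-section identity delivers on a dense subspace, and boundedness then propagates. Beyond this, every step is a routine reproducing-kernel computation, and no further input (such as the Dirichlet-series structure) is needed, which is why the proof quoted from \cite{KM} for the disc case goes through verbatim for an arbitrary, possibly unbounded, domain $\Omega$.
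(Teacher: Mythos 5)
The paper does not actually supply a proof of Theorem~\ref{CH-QU}; it cites \cite[Proposition~2.1]{KM} and \cite[Proposition~6.6]{GmGh} and remarks that the argument in \cite{KM} extends to unbounded domains. Your proof fills in that gap correctly, and it is exactly the expected reproducing-kernel computation: the key is the identity $U_{g,\mathcal J}\bigl(\overline{\mathcal J(g^{-1},u)}\,\kappa_{g^{-1}\cdot u}\bigr)=\kappa_u$, equivalently $U_{g,\mathcal J}^*\kappa_u=\overline{\mathcal J(g^{-1},u)}\,\kappa_{g^{-1}\cdot u}$, from which both directions follow by polarizing on kernel sections and using density. One minor caveat: the parenthetical claim that $U_{g^{-1},\mathcal J}=U_{g,\mathcal J}^{-1}$ follows from ``cocycle-type behaviour forced by \eqref{QU-DE}'' is not quite automatic --- \eqref{QU-DE} alone only forces $\mathcal J(g,s)\,\mathcal J(g^{-1},g\cdot s)$ to have modulus one where $\kappa(s,s)\neq 0$, not to equal $1$ --- but this remark is superfluous, since your earlier observation that $U_{g,\mathcal J}$ is an isometry whose range contains every $\kappa_u$ (hence is dense, hence, being closed, is everything) already delivers surjectivity.
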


\begin{proof}[Proof of Theorem~\ref{MA-TH}] 
Clearly, the zero kernel function defined on any half plane $\mathbb H_\rho$ satisfies \eqref{QU-DE} for $\mathcal J(\psi, .) = 1, \psi \in \mathrm{Aut}(\mathbb H_{\rho}),$ which yields that $\kappa_\mf a$ is quasi-invariant with respect to $\mathcal J.$ 
Also, in this case, (iii) holds with $f=0.$ 
Hence, 
we may assume that $\kappa_{\mf a}$ is non-zero. 
Also, since (i)$\Rightarrow$(ii) is trivial, it suffices to check that (ii)$\Rightarrow$(iii),  (iii)$\Leftrightarrow$(iv) and  (iii)$\Rightarrow$(i).

(ii)$\Rightarrow$(iii): 
	Suppose that $\kappa_\mf a$ is quasi-invariant with respect to some $\mathcal J.$ 
	By Theorem~\ref{CH-QU} ,  $U_{\psi, \mathcal J}$ is unitary on $\mathscr H_\mf a$ for every $\psi \in \mathrm{Aut}_L(\mathbb H_\rho).$ By Theorem~\ref{char-pdsk},
the analytic symbol $A_{n, \mf a} \in \mathscr H_{\mf a}$ for every integer $n \Ge 1.$ Let $i, j$ be two positive integers.	
	Since $U_{\psi, \mathcal J}(A_{i, \mf a})$ and $U_{\psi, \mathcal J}(A_{j, \mf a})$ belong to $\mathscr H_{\mf a},$ by Theorem~\ref{total}(ii), we obtain sequences $\{c_n\}_{n \Ge 1}$ and $\{d_n\}_{n \Ge 1}$ such that  
	\beq \label{cn-dn}
	U_{\psi, \mathcal J}(A_{i, \mf a})(s) = \sum_{n =1}^{\infty} c_n n^{-s}, \quad U_{\psi, \mathcal J}(A_{j, \mf a})(s) = \sum_{n =1}^{\infty} d_n n^{-s}, s \in \mathbb H_\rho.
	\eeq
By \eqref{U-psi-J},
\beq \label{inter-U-phi-A}
U_{\psi, \mathcal J}(A_{i, \mf a}) A_{j, \mf a} \circ \psi^{-1} =  U_{\psi, \mathcal J}(A_{j, \mf a})A_{i, \mf a}\circ \psi^{-1}, \quad \psi \in \mathrm{Aut}(\mathbb H_\rho).
\eeq
For a positive algebraic and irrational number $\omega$ (for example, $\omega=\sqrt{2}$ will serve the purpose), let $\psi(s) = \frac{1}{\omega}(s-\rho)+\rho,$ $s \in \mathbb H_\rho.$ Since
$\psi^{-1}(s)= \omega (s-\rho)+\rho,$ $s \in \mathbb H_\rho,$
by \eqref{cn-dn} and \eqref{inter-U-phi-A}, we have
	\beq \label{EX-IR}
	 \left.
 \begin{array}{ccc}
\displaystyle \Big(\sum_{n =1}^{\infty} c_n n^{-s}\Big) \Big(\sum_{n = 1}^{\infty} a_{n,j}  n^{(\omega -1)\rho} n^{-\omega s}\Big) \\
= \displaystyle \Big(\sum_{n =1}^{\infty} d_n n^{-s}\Big) \Big(\sum_{n = 1}^{\infty} a_{n,i} n^{(\omega-1)\rho} n^{-\omega s}\Big), \quad s \in \mathbb H_\rho.
 \end{array}
\right\}
\eeq
We may conclude from Remark \ref{D-S-rmk-0} that all the general Dirichlet series appearing in \eqref{EX-IR} are absolutely convergent on $\mathbb H_{\rho'},$ where $\rho'=\rho + \max\{1, \omega^{-1}\}.$ 
Since
$\varphi(m,n) = \log(m)+\omega \log(n)$ is injective on $\mathbb N \times \mathbb N$ (see Lemma \ref{Gelfond-S}), 
an application of Lemma \ref{PR-GE-DI} 
with $\lambda_m = \log(m)$ and $\mu_n = \omega \log(n)$  yields that 
\eqref{EX-IR} may be rewritten as
	 \beqn
		\sum_{q = 1}^{\infty} c_{m_q} a_{n_q,j}n_q^{(\omega-1)\rho}e^{-\nu_qs} = \sum_{q = 1}^{\infty}d_{m_q} a_{n_q,i}n_q^{(\omega-1)\rho}e^{-\nu_qs}, \quad s \in \mathbb H_{\rho'},
	 \eeqn 
	 where $\nu_q = \lambda_{m_q} + \mu_{n_q}.$
By Proposition~\ref{uds},
\beq  
\label{recurrence}
c_{m_q} a_{n_q,j} = d_{m_q} a_{n_q,i} ~\mbox{for all integers}~ q \Ge 1.
\eeq
Let $\{\nu_{q_k}\}_{k \Ge 1}$ be the sequence $\{\varphi(k, i)\}_{k \Ge 1}.$
Since $\varphi$ is injective, $(k, i) = \varphi^{-1}(\nu_{q_k})=(m_{q_k}, n_{q_k})$ for every integer $k \Ge 1.$ Letting this in \eqref{recurrence}, we get
$c_k a_{i,j} = d_k a_{i,i}$ for every integer $k \Ge 1.$ 
If $a_{i, i}=0,$ then by \eqref{eq-gram}, $A_{i, \mf a}=0.$ Otherwise, we obtain 
	  \beqn
	  d_k = \frac{a_{i,j}}{a_{i,i}}c_k, \quad \mbox{for every integer}~k \Ge 1.
	  \eeqn
	   By \eqref{cn-dn}, it follows that $U_{\psi, \mathcal J}(A_{i, \mf a})$ and $U_{\psi, \mathcal J}(A_{j, \mf a})$ are linearly dependent. Since $U_{\psi, \mathcal J}$ is unitary, $A_{i, \mf a}$ and $A_{j, \mf a}$ are linearly dependent. Since $i, j \in \mathbb N$ are fixed but arbitrary and $\{A_{q, \mf a}\}_{q \Ge 1}$ is a total set in $\mathscr H_\mf a$ (see Theorem~\ref{total}(iii)), the dimension of $\mathscr H_\mf a$ is exactly $1.$ Since $\dim(\mathscr H_\mf a) = 1,$ $\mathscr H_\mf a$ is spanned by some function $g$ of unit norm. Thus, $\kappa_\mf a(\cdot, s) = c(s)g$ for some scalar $c(s),$ and hence define $f:\mathbb H_\rho \rar \mathbb C$ by $f(s) = \overline{c(s)}.$ Thus, by \eqref{rp},
			\beqn 
					\kappa_\mf a(s,u) = \langle \kappa_\mf a(.,u), \kappa_\mf a(.,s) \rangle &= c(u)\overline{c(s)} = f(s)\overline{f(u)}, \quad s, u \in \mathbb H_\rho.
\eeqn	   
Since $g$ is non-zero, $g(u_0) \neq 0$ for some $u_0 \in \mathbb H_\rho.$ Thus, $f(\cdot)=\frac{\kappa_{\mf a}(\cdot, u_0)}{\overline{g(u_0)}}$ is a Dirichlet series. Since $\kappa_{\mf a}$ is nowhere-vanishing (see Lemma~\ref{EX-PR}(ii)), so is $f.$ 

(iii)$\Leftrightarrow$(iv): In view of the discussion in the last paragraph, this follows from \cite[Proposition~2.19]{PR}. 
	   
	(iii)$\Rightarrow$(i):  
Since $\kappa_{\mf a}$ is nonzero by assumption,  $f$ is nowhere-vanishing. Define $\mathcal J: \mathrm{Aut}(\mathbb H_{\rho}) \times \mathbb H_{\rho} \rar \mathbb{C}$ by 
\beqn
\mathcal J(\psi, s) = \frac{f(s)}{f(\psi(s))}, \quad \psi \in \mathrm{Aut}(\mathbb H_{\rho}), \, s \in \mathbb H_\rho.
\eeqn
Since $f$ is a holomorphic function and $\psi \in \mathrm{Aut}(\mathbb H_\rho),$ $\mathcal J(\psi, \cdot)$ is holomorphic for each $\psi \in \mathrm{Aut}(\mathbb H_\rho).$ Also, 
		\beqn
		\mathcal J(\psi,s ) \kappa_{\mf a}(\psi(s), \psi(u)) \overline{\mathcal J(\psi, u)} = \kappa_{\mf a}(s,u), \quad s, u \in \mathbb H_{\rho}, \psi \in \mathrm{Aut}(\mathbb H_\rho).
		\eeqn
		Thus, $\kappa_{\mf a}$ is quasi-invariant with respect to $\mathcal J.$ 	
\end{proof}

The following is a manifestation of the fact that Bergman-type kernels are never invariant under the automorphism group (see \cite[Section~3]{Mi}).
\begin{corollary} 
\label{no-aut-linear}
Let $\mathbf a = (a_{m, n})_{m,n = 1}^{\infty}$ be a matrix with complex entries and $\kappa_{\mf a}  : \mathbb H_{\rho} \times \mathbb H_{\rho} \rar \mathbb C$ be the Dirichlet series kernel with the coefficient matrix $\mf a.$ If $\kappa_{\mf a}$ is non-constant, then 
 it is never
$\mathrm{Aut}_L(\mathbb H_\rho)$-invariant.
\end{corollary}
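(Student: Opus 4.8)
The plan is to derive the corollary from Theorem~\ref{MA-TH} together with a short asymptotic argument at $+\infty$, arguing by contradiction. So suppose $\kappa_{\mf a}$ is non-constant and $\mathrm{Aut}_L(\mathbb H_\rho)$-invariant. The first step is the observation that, comparing the two bullets of Definition~\ref{def-qi}, $\mathrm{Aut}_L(\mathbb H_\rho)$-invariance is precisely $\mathrm{Aut}_L(\mathbb H_\rho)$-quasi-invariance with respect to the constant multiplier $\mathcal J \equiv 1$; hence the implication (ii)$\Rightarrow$(iii) of Theorem~\ref{MA-TH} yields a Dirichlet series $f$ on $\mathbb H_\rho$, either identically zero or nowhere-vanishing, with $\kappa_{\mf a}(s,u) = f(s)\overline{f(u)}$. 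Since $f \equiv 0$ would make $\kappa_{\mf a}$ constant, $f$ must be nowhere-vanishing; and since $\kappa_{\mf a}$ is non-constant, $f$ is non-constant too.

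Next I would bring in the dilation automorphisms $\psi_\alpha(s) = \alpha(s-\rho)+\rho$, $\alpha > 0$, which lie in $\mathrm{Aut}_L(\mathbb H_\rho)$ as they come from the matrices $\mathrm{diag}(\sqrt\alpha, 1/\sqrt\alpha) \in \mathrm{SL}_2(\mathbb R)$. Invariance gives $f(\psi_\alpha(s))\overline{f(\psi_\alpha(u))} = f(s)\overline{f(u)}$ on $\mathbb H_\rho \times \mathbb H_\rho$; fixing a base point $u_0$ and dividing by $\overline{f(\psi_\alpha(u_0))} \neq 0$ (this is where nowhere-vanishing is used), I get
\beqn
f(\psi_\alpha(s)) = c_\alpha\, f(s), \quad s \in \mathbb H_\rho,
\eeqn
with $c_\alpha := \overline{f(u_0)}/\overline{f(\psi_\alpha(u_0))}$ a nonzero constant.

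The last step would analyse this identity as $\Re(s) \rar \infty$. Writing $f(s) = \sum_{n \Ge 1}\hat f(n) n^{-s}$ and letting $N$ be the least index with $\hat f(N) \neq 0$, I would use the standard fact that $N^s f(s) \rar \hat f(N)$ as $\Re(s) \rar \infty$ (cf. \cite[Lemma~11.1]{Ap}), together with $\Re(\psi_\alpha(s)) \rar \infty$ and $\psi_\alpha(s) - s = (\alpha - 1)(s - \rho)$, to deduce from the displayed identity that $N^{(\alpha-1)(s-\rho)} \rar 1/c_\alpha$. If $N \Ge 2$ this is impossible for $\alpha \neq 1$, since $|N^{(\alpha-1)(s-\rho)}|$ tends to $0$ or $\infty$ while $1/c_\alpha$ is finite and nonzero; so $N = 1$. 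But then the same limit forces $c_\alpha = 1$, hence $f(\psi_\alpha(s)) = f(s)$ for every $\alpha > 0$, and fixing $s$ while letting $\alpha \rar \infty$ (so that $\psi_\alpha(s) \rar \infty$) gives $f(s) = \hat f(1)$ for all $s$ --- contradicting that $f$ is non-constant.

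The routine part is the reduction via Theorem~\ref{MA-TH}; the part that needs care is the leading-term bookkeeping at infinity, in particular separating the cases $N = 1$ and $N \Ge 2$ and noticing that it is the dilations --- not the translations, under which plenty of non-constant factorable kernels (e.g. $\kappa(s,u) = 2^{-s-\overline{u}}$) are invariant --- that force constancy. If one prefers to stay closer to the tools of Section~\ref{4}, an alternative to the asymptotic step is to take $\alpha = \sqrt 2$ in $f(\psi_\alpha(s)) = c_\alpha f(s)$ and rerun the Gelfond--Schneider and uniqueness-of-general-Dirichlet-series argument from the proof of Theorem~\ref{MA-TH} to get $\hat f(n) = 0$ for all $n \Ge 2$; either route closes the proof.
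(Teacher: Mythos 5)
Your proof is correct, and it takes a genuinely different route after the common reduction via Theorem~\ref{MA-TH} to the factorization $\kappa_{\mf a}(s,u)=f(s)\overline{f(u)}$ with $f$ a nowhere-vanishing, non-constant Dirichlet series. The paper's proof first exploits the translation subgroup $\mathscr T$: invariance under $\phi_b$ together with Lemma~\ref{D-S-rmk}(iii) forces $c_m\overline{c_n}=0$ for $m\neq n$, so $f(s)=c_j j^{-s}$ for a single $j$; only then do the dilations enter, to force $j=1$. You bypass the translation step entirely and work with dilations alone, deducing $f(\psi_\alpha(s))=c_\alpha f(s)$ and then running a leading-coefficient asymptotic argument (the analogue of \cite[Lemma~11.1]{Ap}) to rule out $N\Ge 2$ and conclude $c_\alpha=1$, hence $f\equiv\hat f(1)$. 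Your route is more economical in the invariance it uses, and your remark that translation invariance alone cannot suffice --- witness $\kappa(s,u)=2^{-s-\overline{u}}$ --- usefully explains why the paper must bring in the dilations at the end as well. The only point worth flagging is the final limit $f(\psi_\alpha(s))\to\hat f(1)$ as $\alpha\to\infty$: for $s$ non-real, $\Im(\psi_\alpha(s))$ is unbounded, so one should either restrict first to real $s$ and invoke the identity theorem, or note that $\lim_{\sigma\to\infty}f(\sigma+it)=\hat f(1)$ uniformly in $t$ past the abscissa of absolute convergence; either patch is immediate. Your suggested alternative --- taking $\alpha=\sqrt2$ and rerunning the Gelfond--Schneider argument on $f(\psi_\alpha(s))=c_\alpha f(s)$ --- is closer in spirit to the machinery of Section~\ref{4}, but the asymptotic version you present is the more elementary of the two and stands on its own.
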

\begin{proof}
	Assume that $\kappa_{\mf a}$ is $\mathrm{Aut}_L(\mathbb H_\rho)$-invariant. Without loss of generality, we may assume that $\kappa_{\mf a}$ non-zero. 
Thus, $\kappa_\mf a$ is $\mathrm{Aut}_L(\mathbb H_\rho)$-quasi-invariant with respect to $\mathcal J(\psi, s) = 1, ~s \in \mathbb H_\rho, ~\psi \in \mathrm{Aut}_L(\mathbb H_\rho).$	
Hence, by Theorem~\ref{MA-TH},
	there exists a nowhere-vanishing Dirichlet series $f:\mathbb H_\rho \rar \mathbb C$ such that 
	\beq \label{form-kernel}			
	\kappa_\mf a(s,u) =  f(s)\overline{f(u)}, \quad s, u \in \mathbb H_\rho.
\eeq
Moreover, by Theorem~\ref{total}(ii), $f(s) = \sum_{n = 1}^{\infty} c_n n^{-s}, s \in \mathbb H_\rho,$ where $c_n=\langle f, A_{n, \mf a} \rangle.$  
Since $\kappa_{\mf a}$ is invariant under translation automorphisms $\phi_b$ (see \eqref{phi-b}), by \eqref{form-kernel}, we obtain
	\beqn
	\sum_{m, n=1}^{\infty} c_m \overline{c_n} m^{ib} n^{-ib} m^{-s} n^{-\bar{u}} =\sum_{m, n=1}^{\infty} c_m \overline{c_n} m^{-s} n^{-\bar{u}} , \quad b \in \mathbb{R}, ~s, u \in \mathbb H_\rho.
	\eeqn		
	By Lemma~\ref{D-S-rmk}(iii), we obtain
	\beqn
	c_m \overline{c_n}(m^{ib} n^{-ib} - 1) = 0, \quad m \neq n, ~ b \in \mathbb{R},
	\eeqn
	or equivalently,	$c_m \overline{c_n} = 0$ for every pair of integers $m,n$ such that $m \neq n$. This implies that the cardinality of $\{m \in \mathbb N: c_m \neq 0\}$ is either $0$ or $1.$ Since $f\neq 0,$  the cardinality of $\{m \in \mathbb N: c_m \neq 0\}$ must be $1.$ If $c_j \neq 0$ for some $j \in \mathbb N,$  then
	\beq \label{FO-CO}
	\kappa_\mf a(s,u) = c_j^2 j^{-s-\overline{u}},~s, u \in \mathbb H_\rho.
	\eeq
	Then by the invariance of $\kappa_{\mf a},$ for $\psi(s) = a^2(s-\rho) + \rho,$ \,$a \in \mathbb R \backslash \{0\},$ we obtain  
	\beqn
	j^{-a^2(s+\overline{u})+2\rho(a^2-1)} = j^{-s-\overline{u}},~s, u \in \mathbb H_\rho.
	\eeqn 
	In particular, for $u = 2\rho,$ this implies that $j^{(1-a^2)s} = 1, s \in \mathbb H_\rho.$ So, $j$ must be $1,$ which by \eqref{FO-CO} yields that $\kappa_\mf a$ is constant. This completes the proof.
\end{proof}

\section{$\mathscr T$-homogeneous operators} \label{5}

 Let $\mathbf a = (a_{m, n})_{m,n = 1}^{\infty}$ be a matrix with complex entries and $\kappa_{\mf a}  : \mathbb H_{\rho} \times \mathbb H_{\rho} \rar \mathbb C$ be the positive semi-definite Dirichlet series kernel with the coefficient matrix $\mf a.$ Let $\phi_b$ be given by 
	\eqref{phi-b} and 
	 let $\mathscr T =\{\phi_b : b \in \mathbb R\}$ be the subgroup of $\mathrm{Aut}(\mathbb H_\rho).$  
	 We say that $\kappa_{\mf a}$ is {\it translation-invariant} if it is $\mathscr T$-invariant, that is, 
		\beqn
		\kappa_{\mf a}(\phi_b(s), \phi_b(u)) = \kappa_{\mf a}(s, u), \quad b \in \mathbb R, ~s, u \in \mathbb H_\rho.
		\eeqn

It is easy to characterize all translation-invariant Dirichlet series kernels.
	\begin{proposition} \label{prop-t-inv}
Let $\mathbf a = (a_{m, n})_{m,n = 1}^{\infty}$ be a matrix with complex entries and $\kappa_{\mf a}  : \mathbb H_{\rho} \times \mathbb H_{\rho} \rar \mathbb C$ be the positive semi-definite Dirichlet series kernel with the coefficient matrix $\mf a.$ 	
	Then  $\kappa_{\mf a}$ is translation-invariant if and only if 
$\mf a$ is a diagonal matrix.
	\end{proposition}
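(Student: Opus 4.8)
The plan is to expand $\kappa_{\mf a}(\phi_b(s),\phi_b(u))$ term by term and then read off the coefficients using the uniqueness of double Dirichlet series (Lemma~\ref{D-S-rmk}(iii)), in the spirit of the arguments appearing in the proofs of Lemma~\ref{self-ad} and Corollary~\ref{no-aut-linear}.

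For the \emph{necessity}, I would start from the fact that $\phi_b(s)=s-ib$ is a vertical translation, so $\phi_b(\mathbb H_\rho)=\mathbb H_\rho$, and that $\overline{u-ib}=\overline u+ib$ for $b\in\mathbb R$; hence
\[
\kappa_{\mf a}(\phi_b(s),\phi_b(u))=\sum_{m,n=1}^{\infty}a_{m,n}\,m^{-(s-ib)}\,n^{-(\overline u+ib)}=\sum_{m,n=1}^{\infty}a_{m,n}\,m^{ib}n^{-ib}\,m^{-s}\,n^{-\overline u}.
\]
Subtracting the defining series of $\kappa_{\mf a}$, translation-invariance becomes the statement that $\sum_{m,n}a_{m,n}(m^{ib}n^{-ib}-1)\,m^{-s}n^{-\overline u}=0$ on $\mathbb H_\rho\times\mathbb H_\rho$ for every $b\in\mathbb R$. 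I would then observe that the associated double Dirichlet series $(s,u)\mapsto\sum_{m,n}a_{m,n}(m^{ib}n^{-ib}-1)\,m^{-s}n^{-u}$ is regularly convergent on $\mathbb H_\rho\times\mathbb H_\rho$, being the difference of $\sum a_{m,n}m^{-s}n^{-u}$ and its vertical translate $\sum a_{m,n}m^{-(s-ib)}n^{-(u+ib)}$, both regularly convergent there; so Lemma~\ref{D-S-rmk}(iii) applies and yields $a_{m,n}(m^{ib}n^{-ib}-1)=0$ for all $m,n\Ge 1$ and all $b\in\mathbb R$. Finally, for a fixed pair $m\ne n$ I would choose $b=\pi/\log(m/n)$, so that $m^{ib}n^{-ib}=(m/n)^{ib}=e^{i\pi}=-1\ne 1$, which forces $a_{m,n}=0$; thus $\mf a$ is diagonal.

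For the \emph{sufficiency}, assuming $\mf a$ is diagonal, a direct term-by-term computation suffices: the cross terms are absent and the factors $n^{ib}$, $n^{-ib}$ cancel, so for every $b\in\mathbb R$,
\[
\kappa_{\mf a}(\phi_b(s),\phi_b(u))=\sum_{n=1}^{\infty}a_{n,n}\,n^{-(s-ib)}\,n^{-(\overline u+ib)}=\sum_{n=1}^{\infty}a_{n,n}\,n^{-s}\,n^{-\overline u}=\kappa_{\mf a}(s,u),\qquad s,u\in\mathbb H_\rho.
\]

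I do not expect a genuine obstacle here; the only point that requires a moment's care is confirming that the difference series above meets the hypotheses of Lemma~\ref{D-S-rmk}(iii). If the ``difference of regularly convergent series'' bookkeeping feels unsatisfying, the alternative is to pass to $\mathbb H_{\rho+1}\times\mathbb H_{\rho+1}$, where absolute convergence holds by Lemma~\ref{D-S-rmk}(i), and then conclude via Fubini together with two applications of Proposition~\ref{uds}, exactly as in the proof of Lemma~\ref{D-S-rmk}(iii).
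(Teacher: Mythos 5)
Your proof is correct and follows essentially the same route as the paper: expand $\kappa_{\mf a}(\phi_b(s),\phi_b(u))$ using $\phi_b(s)=s-ib$, compare coefficients of the resulting double Dirichlet series via Lemma~\ref{D-S-rmk}(iii), and conclude $a_{m,n}=0$ for $m\neq n$ from the nonvanishing of $m^{ib}n^{-ib}-1$ for a suitable $b$. The only cosmetic differences are that the paper splits off the (automatically invariant) diagonal part before comparing, whereas you keep the full sum, and that you spell out the explicit choice $b=\pi/\log(m/n)$ instead of leaving the nonvanishing implicit.
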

	\begin{proof} We argue as in the proof of Corollary~\ref{no-aut-linear}. 
Note that for any $s, u \in \mathbb H_\rho$ and $b \in \mathbb R,$
\beqn
\kappa_{\mf a}(\phi_{b}(s), \phi_{b}(u)) &=& 
\sum_{m = 1}^{\infty} a_{m,m} m^{-\phi_{b}(s)- \overline{\phi_{b}(u)}} + \sum_{m \neq n}^{\infty} a_{m, n} m^{-\phi_{b}(s)} n^{- \overline{\phi_{b}(u)}} \\
&=& \sum_{m = 1}^{\infty} a_{m,m}m^{-s-\overline{u}} + \sum_{m \neq n}^{\infty} a_{m, n} m^{ib} n^{-ib} m^{-s} n^{-\bar{u}}. 
\eeqn
Thus, $\kappa_{\mf a}(\phi_{b}(s), \phi_{b}(u)) = \kappa_{\mf a}(s, u)$ holds for $b \in \mathbb{R},$ $~s, u \in \mathbb H_\rho$ if and only if 
\beqn
 \sum_{m \neq n}^{\infty} a_{m, n} m^{ib} n^{-ib} m^{-s} n^{-\bar{u}} =\sum_{m \neq n}^{\infty} a_{m, n} m^{-s} n^{-\bar{u}} , \quad b \in \mathbb{R}, ~s, u \in \mathbb H_\rho.
\eeqn		
By Lemma~\ref{D-S-rmk}(iii), this is equivalent to 
		$$a_{m, n}(m^{ib} n^{-ib} - 1) = 0, \quad m \neq n, ~ b \in \mathbb{R},$$
or equivalently,	$a_{m, n} = 0$ for every $m \neq n;$ that is, $\kappa_\mf a$ is a diagonal Dirichlet series kernel. This completes the proof.
	\end{proof}

Consider the diagonal matrix $\mathrm{diag}(\mf a)$ with diagonal entries $\mf a:=\{a_n\}_{n = 1}^{\infty}$ of positive real numbers. Let $\kappa_{\mathrm{diag}(\mf a)}$ be the Dirichlet series kernel with the coefficient matrix $\mathrm{diag}(\mf a),$ that is,  
\beqn
\kappa_{\mathrm{diag}(\mf a)}(s,u) = \sum_{n = 1}^{\infty} a_n n^{-s-\overline{u}},~ \quad s, u \in \mathbb H_\rho.
\eeqn
For sake of convenience, we denote $\kappa_{\mathrm{diag}(\mf a)}$ by $\kappa_{\mf a},$ and 
we refer to $\kappa_{\mathbf a}$ as the {\it diagonal Dirichlet series kernel associated with $\mf a$}. 
Clearly, the kernel $\kappa_{\mathbf a}$ is a positive definite Dirichlet series kernel (see \cite[Lemma~20]{MS}). 

In the remaining part of this section, we analyze a class of operators acting on the reproducing kernel Hilbert space $\mathscr H_{\mf a}$ associated with a diagonal Dirichlet series kernel $\kappa_{\mf a}.$ We begin with a notion  reminiscent of that of the circularity introduced in \cite{AHHK} (variants of this notion appeared in \cite{BB, KM}).
\begin{definition} \label{t-inv-def} 
Let $\mathcal H$ be a complex Hilbert space. A densely defined linear operator $T$ with domain $\mathcal D(T) \subseteq \mathcal H$ is said to be {\it $\mathscr T$-homogeneous} if for every $c \in \mathbb{R}$, there exists a unitary $U_{c} : \mathcal H \rar \mathcal H$ such that $U_c$ maps $\mathcal D(T)$ into $\mathcal D(T)$ and satisfies 
\beqn
U_{c} Tf = (T-icI) U_{c}f, \quad f \in \mathcal D(T).
	\eeqn	
	\end{definition}
	\begin{remark}
If $T$ is $\mathscr T$-homogeneous and if $\lambda$ is an eigenvalue of $T,$ then for every $c \in \mathbb R,$	$\lambda - ic$ is an eigenvalue of $T.$ Thus, if the point-spectrum $\sigma_p(T)$ of any $\mathscr T$-homogeneous operator $T$ is nonempty, then  
\beqn 
\sigma_p(T) = \{\lambda - ic : \lambda \in \sigma_p(T), \, c \in \mathbb R\}.
\eeqn
Similar remark is valid for the spectrum. 
By \cite[Theorem~3.6]{Co}, the spectrum of  a bounded linear operator $T$ on a nonzero Hilbert space $\mathcal H$ is never empty, and hence any $\mathscr T$-homogeneous operator $T$ in $\mathcal H$ is necessarily unbounded. 
	\end{remark}
	
	For the sake of convenience, we refer to a sequence $\mf a$ of real numbers as {\it admissible sequence} if the support $\mathrm{supp}(\mf a)$ of $\mf a$ is an infinite multiplicative subset of $\mathbb N$ such that there exist positive integers $p, q \in \mathrm{supp}(\mf a) \backslash \{1\}$ with $\gcd(p, q) = 1.$  Clearly, $\{n\}_{n \in \mathbb N}$ is an admissible sequence.
	
	The following result produces a family of $\mathscr T$-homogeneous operators on reproducing kernel Hilbert spaces associated with diagonal Dirichlet series kernels.
	 	\begin{theorem} \label{t-inv-thm}
  		Let $\mathbf a = \{a_{n}\}_{n=1}^{\infty}$ be an admissible sequence of non-negative real numbers and let $\kappa_{\mf a}  : \mathbb H_{\rho} \times \mathbb H_{\rho} \rar \mathbb C$ be the diagonal Dirichlet series kernel associated with $\mf a.$ For $u \in \Omega,$ let $\kappa_{\mf a, u}$ denote the function $\kappa_{\mf a}(\cdot, u)$ in $\mathscr H(\kappa_{\mf a}).$	
  		For every real number $a > \rho,$ define a linear operator $T_a$ on $\mathscr H_{\mf a}$ by
  		\beqn
  		T_a\kappa_{\mf a, a+ib} = ib \,\kappa_{\mf a, a+ib}, \quad b \in \mathbb R,
  		\eeqn
  		and  extend it linearly to $\mathcal D(T_a) :=\mbox{span}\{\kappa_{\mf a, a+ib} : b \in \mathbb R\}.$  
  		Then the following statements are valid$:$
  		\begin{enumerate}
  			\item[(i)] $T_a$ is a $\mathscr T$-homogeneous operator in $\mathscr H_\mf a,$
  			\item[(ii)] the domain $\mathcal D(T^*_a)$ of $T^*_a$ is trivial .
  		\end{enumerate}	
  		In this case, $T_a$ is not closable. 
  	\end{theorem}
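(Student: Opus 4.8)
The plan is to verify the three assertions in turn: $\mathscr T$-homogeneity, triviality of $\mathcal D(T_a^*)$, and non-closability, with the bulk of the work lying in the well-definedness of $T_a$ and in (ii).

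First I would check that $T_a$ is well defined on $\mathcal D(T_a)$, i.e.\ that the vectors $\{\kappa_{\mf a, a+ib} : b \in \mathbb R\}$ are linearly independent in $\mathscr H_{\mf a}$. Since $\kappa_{\mf a, a+ib}(s) = \sum_{n \in \mathrm{supp}(\mf a)} a_n n^{-s-a+ib}$, a finite linear relation $\sum_{k} c_k \kappa_{\mf a, a+ib_k} = 0$ would force, by the uniqueness of the general Dirichlet series (Proposition~\ref{uds}), that $\sum_k c_k n^{ib_k} = 0$ for every $n \in \mathrm{supp}(\mf a)$; since $\mathrm{supp}(\mf a)$ is infinite and the functions $t \mapsto n^{it}$ are characters, a Vandermonde-type argument (using that $n\mapsto n^{ib_k}$ are distinct for distinct $b_k$ once $n$ ranges over infinitely many values) gives $c_k = 0$. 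This also shows $\mathcal D(T_a)$ is dense, because by Theorem~\ref{total}(iii) the analytic symbols $A_{n,\mf a}$ are total and each $A_{n,\mf a}$ (a scalar multiple of $n^{-s}$ in the diagonal case) can be recovered from the $\kappa_{\mf a, a+ib}$ by integrating against characters — or more simply, $\overline{\mathcal D(T_a)} = \mathscr H_{\mf a}$ because $\kappa_{\mf a, a+ib}$ for $b$ in any neighbourhood already spans, and the reproducing property forces any $f \perp \mathcal D(T_a)$ to vanish on the line $\Re s = a$, hence everywhere by the identity theorem.

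For (i), I would take $U_c := U_{\phi_c, \mathcal J}$ with $\mathcal J \equiv 1$ (the translation-invariant diagonal kernel is $\mathscr T$-invariant by Proposition~\ref{prop-t-inv}, so by Theorem~\ref{CH-QU} each $U_c$ is unitary on $\mathscr H_{\mf a}$). Unwinding \eqref{U-psi-J}, $U_c$ acts on reproducing vectors by $U_c \kappa_{\mf a, u} = \kappa_{\mf a, \phi_c(u)} = \kappa_{\mf a, u + ic}$ (using invariance $\kappa_{\mf a}(\phi_c(s), u) = \kappa_{\mf a}(s, \phi_{-c}(u))$); in particular $U_c$ maps $\kappa_{\mf a, a+ib}$ to $\kappa_{\mf a, a+i(b+c)}$, so $U_c(\mathcal D(T_a)) \subseteq \mathcal D(T_a)$. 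Then on a generator,
\[
U_c T_a \kappa_{\mf a, a+ib} = ib\, \kappa_{\mf a, a+i(b+c)} = \big(i(b+c) - ic\big)\kappa_{\mf a, a+i(b+c)} = (T_a - icI) U_c \kappa_{\mf a, a+ib},
\]
and extending linearly gives $U_c T_a = (T_a - icI) U_c$ on $\mathcal D(T_a)$, which is the $\mathscr T$-homogeneity relation.

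The main obstacle is (ii): showing $\mathcal D(T_a^*) = \{0\}$. Suppose $g \in \mathcal D(T_a^*)$, so there is $h \in \mathscr H_{\mf a}$ with $\inp{T_a f}{g} = \inp{f}{h}$ for all $f \in \mathcal D(T_a)$. Taking $f = \kappa_{\mf a, a+ib}$ and using the reproducing property, $ib\,\overline{g(a+ib)} = \overline{h(a+ib)}$, i.e.\ $h(a+ib) = -ib\, g(a+ib)$ for all $b \in \mathbb R$. Writing $g(s) = \sum_n c_n a_n n^{-s}$ and $h(s) = \sum_n d_n a_n n^{-s}$ (members of $\mathscr H_{\mf a}$ are Dirichlet series by Theorem~\ref{total}(ii)), the identity $h(a+ib) = -ib\,g(a+ib)$ for all $b$, combined with uniqueness, is incompatible with $g \in \mathscr H_{\mf a}$ unless $g = 0$: the key point is that multiplication by the unbounded symbol ``$-ib = \log(\cdot)$-type'' cannot preserve membership in the Hilbert space. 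Concretely, I would exploit the admissibility of $\mf a$: pick $p, q \in \mathrm{supp}(\mf a)$ with $\gcd(p,q)=1$, $p,q > 1$, and note $\mathrm{supp}(\mf a) \supseteq \{p^j q^k : j,k \ge 0\}$; differentiating (in a suitable distributional sense) or substituting the Dirichlet-series expansion of $g$ and $h$ into $h(a+ib)=-ib\,g(a+ib)$ and using Proposition~\ref{uds} on general Dirichlet series with exponents $\log n$ forces $a_n d_n = -(\log n)\, a_n c_n$, hence $d_n = -(\log n) c_n$ whenever $a_n \ne 0$; then $\|h\|^2 = \sum_n |d_n|^2 a_n = \sum_n (\log n)^2 |c_n|^2 a_n < \infty$ while $\|g\|^2 = \sum_n |c_n|^2 a_n < \infty$, and I need the multiplicative structure of $\mathrm{supp}(\mf a)$ to show these two finiteness conditions together force $c_n = 0$ for all $n$ — this is where the interplay between the growth of $\log n$ along the multiplicative semigroup generated by $p,q$ and the fixed weights $a_n$ does the work. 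Finally, once $\mathcal D(T_a^*) = \{0\}$, the operator $T_a$ is not closable: a densely defined operator is closable iff its adjoint is densely defined, and $\{0\}$ is not dense in the nonzero space $\mathscr H_{\mf a}$.
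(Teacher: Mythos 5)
Part (i) of your proposal is essentially sound and close in spirit to the paper's argument, except that your linear-independence step is incomplete: the phrase ``Vandermonde-type argument (using that $n\mapsto n^{ib_k}$ are distinct for distinct $b_k$ once $n$ ranges over infinitely many values)'' does not constitute a proof. Distinctness of characters on an infinite set is not enough; indeed if $\mathrm{supp}(\mf a)$ were, say, $\{2^j : j \ge 0\}$, then $n\mapsto n^{ib}$ and $n\mapsto n^{i(b+2\pi/\log 2)}$ coincide on $\mathrm{supp}(\mf a)$ and linear independence fails. The paper's Proposition~\ref{LI-INS} handles this via an induction using multiplicativity together with the admissibility hypothesis that there exist coprime $p,q>1$ in the support, so that $\log p/\log q$ is irrational. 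This is precisely the reason the hypothesis of admissibility is needed, and you should not wave it away.

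Part (ii) contains a genuine error. From $\langle T_af,g\rangle = \langle f, T_a^*g\rangle$ applied to $f=\kappa_{\mf a, a+ib}$ and the reproducing property, one does get $h(a+ib) = -ib\,g(a+ib)$ for all $b\in\mathbb R$, where $h = T_a^*g$. But your claimed coefficient relation $d_n = -(\log n)c_n$ does not follow. Writing $s = a+ib$, the identity reads $h(s) = (a-s)\,g(s)$ on the vertical line $\Re s = a$, and by the identity theorem $h(s) = (a-s)g(s)$ throughout $\mathbb H_\rho$. Since $(a-s)g(s)$ is not a Dirichlet series, Proposition~\ref{uds} does not apply to give any such coefficient relation; your invocation of ``differentiating in a distributional sense'' seems to conflate $h(a+ib) = -ib\,g(a+ib)$ with $h = g'$, which would indeed give $\hat h(n) = -(\log n)\hat g(n)$ but is a different equation. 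Moreover, even granting $d_n = -(\log n) c_n$, the two finiteness conditions $\sum |c_n|^2 a_n < \infty$ and $\sum (\log n)^2|c_n|^2 a_n < \infty$ do not force $c_n=0$: take $c_n$ decaying like $n^{-100}$ and both sums converge. The actual mechanism in the paper is entirely different: after analytic continuation of $-s\phi_1(s) = \phi_2(s)$ from the imaginary axis to a full right half-plane $\mathbb H_{-\delta}$ (made possible by the square-summability condition \eqref{Mdelta}, which in your case follows from choosing $\delta$ with $a-\delta>\rho$), one kills the Dirichlet coefficients one at a time by sending $\Re s\to\infty$: the polynomial factor $(a-s)$ or $s$ is dominated by the exponential decay $m^{-s}$ of the tails, and an induction on $n$ gives $\hat\phi_1(n)=\hat\phi_2(n)=0$ for all $n$, hence $g=0$. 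Your sketch of (ii) would have to be replaced by this (or some equivalent) limit argument. The final assertion about non-closability is correct and is the standard fact that a densely defined operator is closable iff its adjoint is densely defined.
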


	The proof of Theorem~\ref{t-inv-thm} relies on a couple of facts. 
	
	\begin{proposition} \label{LI-INS}
	Let 
 $\mf b =\{b_n\}_{n =1}^{\infty}$ be an admissible sequence of real numbers. 
For every $b \in \mathbb R,$ assume that the Dirichlet series
  		\beqn
  		f_b(s) = \sum_{n \in \mathrm{supp}(\mf b)} n^{ib} b_n n^{-s}
  		\eeqn
  		converges on $\mathbb H_{\rho}$ 
  		for some $\rho \in \mathbb{R}.$ Then the family $\{f_b : b \in \mathbb R\}$ is linearly independent in $\mathrm{Hol}(\mathbb H_\rho).$
  	\end{proposition}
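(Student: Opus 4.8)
\textbf{Proof proposal for Proposition~\ref{LI-INS}.}

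The plan is to argue by contradiction: suppose some nontrivial finite linear combination of the $f_b$ vanishes on $\mathbb H_\rho$, say $\sum_{k=1}^{N} \alpha_k f_{b_k} = 0$ with distinct reals $b_1, \dots, b_N$ and all $\alpha_k \neq 0$. Writing this out as a single Dirichlet series indexed by $n \in \mathrm{supp}(\mf b)$, the coefficient of $n^{-s}$ is $b_n \sum_{k=1}^{N} \alpha_k n^{ib_k}$. Since each $f_{b_k}$ converges on $\mathbb H_\rho$, so does the combination, and by Proposition~\ref{uds} (uniqueness of the general Dirichlet series, applied with $\lambda_n = \log n$) each such coefficient must be zero. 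Hence for every $n \in \mathrm{supp}(\mf b)$ we have $\sum_{k=1}^{N} \alpha_k n^{ib_k} = 0$, i.e.\ $\sum_{k=1}^{N} \alpha_k e^{ib_k \log n} = 0$.

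The heart of the matter is to exploit the multiplicative structure of $S := \mathrm{supp}(\mf b)$. Since $S$ is an infinite multiplicative subset of $\mathbb N$ and contains coprime $p, q \in S \setminus \{1\}$, the set $\{p^a q^c : a, c \in \mathbb Z_+\}$ lies in $S$. Fix $n = p$ and $n = q$; we get two relations $\sum_k \alpha_k e^{ib_k \log p} = 0$ and $\sum_k \alpha_k e^{ib_k \log q} = 0$. More usefully, applying the relation to $n = p^a q^c$ gives $\sum_{k=1}^{N} \alpha_k e^{i b_k (a \log p + c \log q)} = 0$ for all $a, c \in \mathbb Z_+$. Now I would view the map $(a,c) \mapsto a\log p + c\log q$; because $\log p$ and $\log q$ are $\mathbb Q$-linearly independent (indeed $\log p / \log q$ is irrational, since $p^m = q^n$ forces $m = n = 0$ by unique factorization and coprimality), the values $a\log p + c\log q$ for $(a,c) \in \mathbb Z_+^2$ form an infinite set; but I want something stronger — that we can make the vector $(e^{ib_1 t}, \dots, e^{ib_N t})$ range widely as $t$ runs over these values. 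The cleanest route: restrict to $n = p^a$, $a \in \mathbb Z_+$, giving $\sum_k \alpha_k (e^{ib_k \log p})^a = 0$ for all $a \geq 0$. Setting $w_k = e^{ib_k \log p} \in \mathbb T$, this says the sequence $a \mapsto \sum_k \alpha_k w_k^a$ is identically zero. If the $w_k$ were distinct, a Vandermonde argument forces all $\alpha_k = 0$, a contradiction. The remaining obstacle is that the $w_k$ need \emph{not} be distinct — $b_k \log p$ could differ by multiples of $2\pi$. This is exactly where the second generator $q$ enters: group the indices $k$ according to the value of $w_k = e^{ib_k\log p}$; within a group, the $b_k$ are distinct but $b_k \log p \in 2\pi\mathbb Z + \theta$ for a common $\theta$, and then looking instead at $n = q^c$ (or $n = p^a q^c$ and letting $a$ be chosen to isolate one $w_k$-group) separates them, because $b_k \log q$ cannot simultaneously be congruent mod $2\pi$ — here one uses that $\log p$, $\log q$, and $2\pi$ together cannot satisfy two independent rational relations with the $b_k$, or more concretely a Kronecker/Weyl equidistribution argument on the torus: the sequence $a \mapsto (a\,b_k\log p \bmod 2\pi)_k$ together with a $q$-coordinate makes the relevant characters distinct.

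I expect the main obstacle to be precisely this possible coincidence of the phases $e^{ib_k\log p}$, forcing a genuine use of \emph{two} multiplicatively independent generators $p, q$ rather than one; a clean way to package it is to observe that the function $g(a,c) = \sum_k \alpha_k e^{ib_k(a\log p + c\log q)}$ vanishes on all of $\mathbb Z_+^2$, regard it as an exponential polynomial on $\mathbb Z^2$ with frequency vectors $(b_k\log p, b_k\log q) \in \mathbb R^2$, and note these $N$ frequency vectors are \emph{distinct} (if $(b_k\log p, b_k\log q) = (b_l\log p, b_l\log q)$ then $b_k = b_l$ since $\log p \neq 0$); distinct frequencies of an exponential polynomial on $\mathbb Z^2$ that vanishes identically force all coefficients to vanish (again a multidimensional Vandermonde / dual-group argument, or induction on $N$ by taking difference operators $g(a+1,c) - e^{ib_N\log p}g(a,c)$ to kill one term). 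That contradiction completes the proof. I would present the exponential-polynomial-on-$\mathbb Z^2$ formulation as the main lemma, since it makes the distinctness of frequencies transparent and sidesteps any equidistribution technicalities.
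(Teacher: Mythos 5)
Your reduction via Proposition~\ref{uds} to the statement that $\sum_{k=1}^N \alpha_k n^{ib_k}=0$ for all $n\in\mathrm{supp}(\mf b)$ forces $\alpha_k=0$ is the same first step the paper takes, and restricting to the sub-semigroup $\{p^aq^c\}$ is a reasonable variant of the paper's use of full multiplicativity. The gap is in your final packaging. The claim that ``distinct frequencies of an exponential polynomial on $\mathbb Z^2$ that vanishes identically force all coefficients to vanish'' is false if ``distinct'' means distinct in $\mathbb R^2$: the characters $(a,c)\mapsto e^{i\,\omega\cdot(a,c)}$ of $\mathbb Z^2$ are parametrized by $\mathbb R^2/(2\pi\mathbb Z)^2$, so linear independence requires the vectors $\omega_k=(b_k\log p,\,b_k\log q)$ to be distinct \emph{modulo} $(2\pi\mathbb Z)^2$, not merely in $\mathbb R^2$. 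Your parenthetical check (``if $(b_k\log p,b_k\log q)=(b_l\log p,b_l\log q)$ then $b_k=b_l$'') only rules out equality in $\mathbb R^2$ and so does not establish what is needed. The same issue resurfaces in your difference-operator sketch: $g(a+1,c)-e^{ib_N\log p}g(a,c)$ annihilates not just the $N$-th term but every $k$ with $e^{ib_k\log p}=e^{ib_N\log p}$, so the induction need not drop $N$ by exactly one, and you are back to confronting coincidences mod $2\pi$.

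The gap is fixable, and the fix is exactly the arithmetic input you invoke elsewhere for a different purpose: if $\omega_k\equiv\omega_l\pmod{(2\pi\mathbb Z)^2}$ with $b_k\neq b_l$, then $(b_k-b_l)\log p=2\pi m$ and $(b_k-b_l)\log q=2\pi n$ for integers $m,n$, both nonzero since $\log p,\log q\neq 0$ and $b_k\neq b_l$; hence $\log p/\log q=m/n\in\mathbb Q$, contradicting $\gcd(p,q)=1$ by unique factorization. This is precisely the mechanism at the end of the paper's Step~1, where an induction on $N$ over the full multiplicative support first collapses the relation to $m^{id_0}=m^{id_l}$ for all $m\in\mathrm{supp}(\mf b)$ and then invokes the same coprimality/irrationality observation. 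So your route is genuinely different in organization (argue by characters on the rank-two sub-semigroup rather than inducting over the whole support), but the indispensable number-theoretic step is identical, and your writeup asserts the conclusion of that step without supplying it.
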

	\begin{proof}
			The proof is divided into two steps. 
			
			{\bf Step 1:}  We prove by induction on $N \Ge 1$ that 
		\begin{enumerate} \label{claim-lin-in.}
		\item[]	if there exist $c_{1}, \ldots, c_{N} \in \mathbb{C}$ and distinct numbers $d_{1}, \ldots, d_N \in \mathbb{R}$ such that $\sum_{j = 1}^{N} c_{j} n^{id_{j}} = 0$ for every integer $n \in \mathrm{supp}(\mf b),$
		then 
$c_j =0$ for every $j=1, \ldots, N.$
\end{enumerate}

		If $N =1$, then this is trivial. Assume that \eqref{claim-lin-in.} holds for $N.$ Suppose to the contrary, that \eqref{claim-lin-in.} does not hold for $N+1$. After renaming and the coefficients $c_1, \ldots, c_{N+1}$ and distinct real numbers $d_{1}, \ldots, d_{N+1},$ we may assume that 
		\beq \label{repre-nid0}
		n^{i d_{0}} = \sum_{j = 1}^{N} c_{j} n^{id_{j}}, \quad n \in \mathrm{supp}(\mf b).
		\eeq 
Thus, there exists $l \in \{1, \ldots,N\}$ such that $c_{l} \neq 0.$ For integers $m, n \in \mathrm{supp}(\mf b)$, by \eqref{repre-nid0} and the multiplicativity of $\mathrm{supp}(\mf b),$
		\beqn 
		\sum_{j = 1}^{N} c_{j} m^{id_{j}}n^{id_{j}} = (mn)^{id_{0}} = m^{id_{0}}n^{id_{0}} = \sum_{j = 1}^{N} c_{j} m^{id_{0}}n^{id_{j}},
		\eeqn
		and hence we obtain
		\beqn
		\sum_{j = 1}^{N} c_{j} (m^{id_{0}} - m^{id_{j}}) n^{id_{j}} = 0, \quad m, n \in \mathrm{supp}(\mf b).
		\eeqn
		By the induction hypothesis, we have $c_{j} (m^{id_{0}} - m^{id_{j}}) = 0$ for $j =1,\ldots, N$ and integers $m \in \mathrm{supp}(\mf b).$ Since $c_l \neq 0,$ we get 
		$m^{id_{0}} = m^{id_{l}}$ for every integer $m \in \mathrm{supp}(\mf b).$
		Thus, for every integer $m \in \mathrm{supp}(\mf b),$ there exists a nonzero integer $k_{m}$ such that
\beq \label{letting-m}
(d_{0} - d_{l}) \log(m) = 2 k_{m} \pi.
\eeq
Since $\mf b$ is admissible, $\mathrm{supp}(\mf b)$ contain integers $p, q \neq 1$ with $\gcd(p, q)=1.$ Thus, by \eqref{letting-m}, we obtain
		$\frac{\log(p)}{\log(q)} = \frac{k_{p}}{k_{q}},$
		which is not possible since $\frac{\log(p)}{\log(q)}$ is an irrational number (consult the Fundamental Theorem of Arithmetic). Hence, \eqref{claim-lin-in.} holds true for $k = N+1.$

		{\bf Step 2:} To prove the linear independence of the family $\{f_b : b \in \mathbb R\},$ suppose that there exists $\{c_{j}\}_{j = 1}^{N} \subseteq \mathbb{C}$ and $\{d_{j}\}_{j=1}^{N} \subseteq \mathbb{R}$ such that 
		\beqn
		\sum_{j = 1}^{N} c_{j} f_{d_{j}}(s)=0, \quad s \in \mathbb H_\rho.
		\eeqn
		Thus, we obtain 
		\beqn
		 \sum_{n \in \mathrm{supp}(\mf b)} \Big(\sum_{j = 1}^{N} c_{j}  n^{id_j}\Big) b_n n^{-s}=0, \quad s \in \mathbb H_\rho.
		\eeqn
Since each $b_n \neq 0$ for $n \in \mathrm{supp}(\mf b),$ by Proposition~\ref{uds},  we obtain 
		\beqn
		\sum_{j = 1}^{N} c_{j} n^{id_{j}} = 0, \quad n \in \mathrm{supp}(\mf b).
		\eeqn
		Thus, by Step~1, we obtain that 
		\beqn		
		c_j =0, \quad j=1, \ldots, N.
		\eeqn
		This completes the proof.
	\end{proof}
\begin{corollary} 
\label{translate-k}		
Under the assumptions of Theorem~\ref{t-inv-thm},  
	for every real number $a > \rho,$ the family $\{\kappa_{\mf a, a+ib} : b \in \mathbb{R}\}$ is a linearly independent total subset of $\mathscr H_{\mf a}.$
\end{corollary}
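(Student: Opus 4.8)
I must show two things about the family $\{\kappa_{\mf a, a+ib} : b \in \mathbb R\}$: that it is linearly independent, and that it is total in $\mathscr H_{\mf a}$. For the linear independence, observe that since $\mf a$ is a diagonal matrix with entries $\{a_n\}_{n \ge 1}$, the reproducing kernel evaluated at $a+ib$ is the Dirichlet series
\beqn
\kappa_{\mf a, a+ib}(s) = \sum_{n =1}^{\infty} a_n n^{-s-\overline{(a+ib)}} = \sum_{n \in \mathrm{supp}(\mf a)} n^{ib}\,(a_n n^{-a})\, n^{-s}, \quad s \in \mathbb H_\rho.
\eeqn
This is precisely a series of the form $f_b(s) = \sum_{n \in \mathrm{supp}(\mf b)} n^{ib} b_n n^{-s}$ appearing in Proposition~\ref{LI-INS}, with $b_n := a_n n^{-a}$. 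Since $\mf a$ is admissible, its support is an infinite multiplicative subset of $\mathbb N$ containing coprime $p, q \neq 1$, and $b_n = a_n n^{-a} \neq 0$ exactly on $\mathrm{supp}(\mf a)$, so $\mf b = \{b_n\}$ is again admissible; moreover each $f_b$ converges on $\mathbb H_{\rho}$ because $\kappa_{\mf a}$ is a Dirichlet series kernel on $\mathbb H_\rho \times \mathbb H_\rho$. Thus Proposition~\ref{LI-INS} applies verbatim and yields linear independence of $\{\kappa_{\mf a, a+ib} : b \in \mathbb R\}$.

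For totality, suppose $f \in \mathscr H_{\mf a}$ satisfies $\inp{f}{\kappa_{\mf a, a+ib}} = 0$ for every $b \in \mathbb R$. By the reproducing property \eqref{rp}, this says $f(a+ib) = 0$ for all $b \in \mathbb R$, i.e. $f$ vanishes on the entire vertical line $\{\Re(s) = a\}$. Since $a > \rho$, this line lies in $\mathbb H_\rho$; and by Remark~\ref{rmk-H-k}(ii) the function $f$ is holomorphic on the connected open set $\mathbb H_\rho$. A non-discrete zero set of a holomorphic function on a domain forces the function to be identically zero (identity theorem), so $f = 0$ on $\mathbb H_\rho$. Hence the orthogonal complement of $\mbox{span}\{\kappa_{\mf a, a+ib} : b \in \mathbb R\}$ in $\mathscr H_{\mf a}$ is trivial, which is the definition of totality.

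Both halves are short given the machinery already in place: the linear independence is an immediate appeal to Proposition~\ref{LI-INS} once one recognizes $\kappa_{\mf a, a+ib}$ as an instance of $f_b$, and the totality is the standard vertical-line-of-zeros argument via the identity theorem for holomorphic functions on $\mathbb H_\rho$. There is no real obstacle; the only point requiring a moment's care is verifying that the auxiliary sequence $\mf b = \{a_n n^{-a}\}$ inherits admissibility from $\mf a$ (its support equals $\mathrm{supp}(\mf a)$ since $n^{-a} \neq 0$, so it is still infinite, multiplicative, and contains coprime elements $\neq 1$) and that the convergence hypothesis of Proposition~\ref{LI-INS} holds on $\mathbb H_\rho$, which follows from regular convergence of $\kappa_{\mf a}$ on $\mathbb H_\rho \times \mathbb H_\rho$ together with $a > \rho$.
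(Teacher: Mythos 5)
Your proof is correct and follows essentially the same route as the paper: linear independence via Proposition~\ref{LI-INS} applied to the sequence $b_n = a_n n^{-a}$ supported on $\mathrm{supp}(\mf a)$, and totality via the reproducing property together with the identity theorem applied to a function vanishing on the vertical line $\Re(s)=a$. Your write-up is just a more detailed version of the paper's argument, including the (easy but worthwhile) check that admissibility is inherited by $\mf b$.
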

\begin{proof}
Define $\mf b$ by
\beqn 
b_n = \begin{cases} a_{n} n^{-a} & \mbox{if}~n \in  \mathrm{supp}(\mf a), \\
0 & \mbox{otherwise}.
\end{cases}
\eeqn
Applying Proposition ~\ref{LI-INS} to $\mf b,$ 
we obtain the linear independence of $\{\kappa_{\mf a, a+ib} : b \in \mathbb{R}\}.$ The remaining part follows from the reproducing property \eqref{rp} and an application of the identity theorem.
\end{proof}
As a consequence of Corollary~\ref{translate-k}, we conclude that the vertical translates of the Riemann zeta function are linearly independent in $\mathscr H_{\mf 1}.$ We also need the following lemma in the proof of Theorem~\ref{t-inv-thm}. 
	\begin{lemma} \label{suff-t-inv}
	Let $\mathbf a = \{a_{n}\}_{n = 1}^{\infty}$ be a sequence of nonnegative real numbers and $\kappa_{\mf a}  : \mathbb H_{\rho} \times \mathbb H_{\rho} \rar \mathbb C$ be the diagonal Dirichlet series kernel associated with $\mf a.$  For $b \in \mathbb R$ and an admissible sequence $\mf b =\{b_n\}_{n=1}^{\infty}$ of real numbers such that $\mathrm{supp}(\mf b) = \mathrm{supp}(\mf a)$, consider the Dirichlet series
	\beqn
	f_b(s) = \sum_{n \in \mathrm{supp}(\mf b)} n^{ib} b_n n^{-s}, \quad s \in \mathbb H_\rho.
	\eeqn
	Assume that the family $\{f_b : b \in \mathbb R\}$ is a total subset of $\mathscr H_{\mf a}.$ Define a linear operator $T$ on $\mathscr H_{\mf a}$ by
	\beqn
	Tf_b = ib f_b, \quad b \in \mathbb R,
	\eeqn
	and  extend it linearly to $\mathcal D(T)=\mbox{span}\{f_b : b \in \mathbb R\}.$ Then 
	$T$ is a $\mathscr T$-homogeneous operator in $\mathscr H_\mf a.$ If, in addition, there exists a positive number $\delta > 0$ such that 
	\beq \label{Mdelta}
	\sum_{n \in \mathrm{supp}(\mf a)} \frac{n^{2\delta}b^2_n}{a_n}  < \infty,
	\eeq
	 then the domain $\mathcal D(T^*)$ of the Hilbert space adjoint $T^{*}$ of $T$ is trivial.
	\end{lemma}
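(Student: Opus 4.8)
The plan is to treat the two assertions separately, after the routine remarks that $\{f_b : b \in \mathbb R\}$ is linearly independent by Proposition~\ref{LI-INS} (hence $T$ is well-defined, using that $\mf b$ is admissible) and that $\mathcal D(T)$ is dense by the totality hypothesis. Since the coefficient matrix $\mf a$ is diagonal, its analytic symbols are $A_{n, \mf a} = a_n n^{-s}$ and $\inp{A_{n, \mf a}}{A_{m, \mf a}} = a_n \delta_{m,n}$, so $\{\sqrt{a_n}\, n^{-s} : n \in \mathrm{supp}(\mf a)\}$ is an orthonormal basis of $\mathscr H_{\mf a}$ (Theorem~\ref{total}(iii)); consequently $\|f_b\|^2 = \sum_{n \in \mathrm{supp}(\mf a)} b_n^2/a_n$ is a finite constant independent of $b$, and $\inp{f_b}{g} = \sum_{n \in \mathrm{supp}(\mf a)} \frac{b_n \overline{\widehat g(n)}}{a_n}\, n^{ib}$ for every $g \in \mathscr H_{\mf a}$, where $\widehat g(n) = \inp{g}{A_{n, \mf a}}$ is the $n$-th Dirichlet coefficient of $g$ (Theorem~\ref{total}(ii)).

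For (i), I would use that a diagonal Dirichlet series kernel is translation-invariant (Proposition~\ref{prop-t-inv}), hence $\mathscr T$-quasi-invariant with $\mathcal J \equiv 1$, so by Theorem~\ref{CH-QU} the map $(U_c h)(s) = h(s - ic)$ is unitary on $\mathscr H_{\mf a}$ for every $c \in \mathbb R$. A direct computation with the Dirichlet expansion of $f_b$ gives $U_c f_b = f_{b+c}$, so $U_c$ carries $\mathcal D(T)$ onto itself and
\beqn
U_c T f_b = ib\, f_{b+c} = \big(i(b+c) - ic\big) f_{b+c} = (T - icI) U_c f_b ,
\eeqn
and extending by linearity yields $U_c T = (T - icI) U_c$ on $\mathcal D(T)$, which is exactly $\mathscr T$-homogeneity.

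For (ii), let $g \in \mathcal D(T^*)$ and put $h = T^* g$, so that $ib\, \inp{f_b}{g} = \inp{T f_b}{g} = \inp{f_b}{h}$ for all $b \in \mathbb R$. Since $\|f_b\|$ is constant, the right-hand side is bounded in $b$, whence $\inp{f_b}{g} = O(1/|b|) \to 0$ as $|b| \to \infty$. Here the extra hypothesis $\sum_n n^{2\delta} b_n^2/a_n < \infty$ enters: together with $\sum_n |\widehat g(n)|^2/a_n = \|g\|^2 < \infty$ and Cauchy--Schwarz it gives $\sum_n \big|\frac{b_n \overline{\widehat g(n)}}{a_n}\big| n^{\delta} < \infty$, so $b \mapsto \inp{f_b}{g} = \Phi(-ib)$ where $\Phi(s) = \sum_n \frac{b_n \overline{\widehat g(n)}}{a_n} n^{-s}$ is absolutely convergent on $\mathbb H_{-\delta}$; likewise $\inp{f_b}{h} = \Psi(-ib)$ with $\Psi$ absolutely convergent on $\mathbb H_{-\delta}$. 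The adjoint identity becomes $-s\,\Phi(s) = \Psi(s)$ on the line $\Re(s) = 0$, hence on all of $\mathbb H_{-\delta}$ by the identity theorem; thus $\Phi(s) = -\Psi(s)/s$ is bounded by a multiple of $1/|s|$ on $\overline{\mathbb H_{-\delta/2}}$, so $\Phi(-\tfrac{\delta}{2} + it) \to 0$ as $|t| \to \infty$. But the restriction of $\Phi$ to the line $\Re(s) = -\tfrac{\delta}{2}$, being a uniform limit of Dirichlet polynomials, cannot tend to $0$ at infinity unless it vanishes identically (its mean value $\frac{1}{2T}\int_{-T}^{T} m^{-\delta/2 + it}\Phi(-\tfrac{\delta}{2}+it)\, dt$ recovers $\frac{b_m \overline{\widehat g(m)}}{a_m}$ in the limit and is forced to be $0$), so $\Phi \equiv 0$. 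Since $b_n \neq 0$ on $\mathrm{supp}(\mf a)$ and $A_{n, \mf a} = 0$ off it, this forces $\widehat g(n) = 0$ for every $n$, whence $g = 0$ by Theorem~\ref{total}(ii). Thus $\mathcal D(T^*) = \{0\}$, and in particular $T$, being densely defined with non-densely-defined adjoint, is not closable.

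I expect the main obstacle to be precisely this last step: extracting the vanishing of all Dirichlet coefficients of $g$ from the single datum $g \in \mathcal D(T^*)$. Everything turns on the rigidity of Dirichlet series on vertical lines — a nonzero one cannot decay at $\pm i\infty$ — and the role of the $\delta$-hypothesis is exactly to push the line $\Re(s) = 0$ strictly inside a half-plane of absolute convergence of the relevant Dirichlet series, so that holomorphic continuation and the uniqueness theorem (Proposition~\ref{uds}) apply without boundary subtleties.
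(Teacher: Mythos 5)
Your proposal is correct, and for part (i) it is essentially the paper's argument: the paper defines the same unitary $U_c$ directly by $U_c(n^{-s})=n^{ic}n^{-s}$ on the orthogonal set $\{n^{-s}\}_{n\in\mathrm{supp}(\mf a)}$ rather than routing through Proposition~\ref{prop-t-inv} and Theorem~\ref{CH-QU}, but the operator and the verification $U_cTf_b=(T-icI)U_cf_b$ are identical. For part (ii) you and the paper agree up to the key functional equation $-s\Phi(s)=\Psi(s)$ on $\mathbb H_{-\delta}$, obtained in both cases from the adjoint identity, Cauchy--Schwarz with the hypothesis \eqref{Mdelta}, and the identity theorem; you diverge only in the endgame. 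The paper extracts $\hat\Phi(n)=\hat\Psi(n)=0$ by a strong induction that repeatedly takes limits as $s\to\infty$ along the real direction (peeling off one Dirichlet coefficient at a time, in the spirit of the standard uniqueness proof), whereas you let $|s|\to\infty$ along the vertical line $\Re(s)=-\delta/2$, deduce $\Phi\to 0$ there from the boundedness of $\Psi$ on $\overline{\mathbb H_{-\delta/2}}$, and then kill all coefficients at once via the Bohr mean-value formula $\lim_{T\to\infty}\frac{1}{2T}\int_{-T}^{T}m^{-\delta/2+it}\Phi(-\tfrac{\delta}{2}+it)\,dt=\hat\Phi(m)m^{-\delta}$ (up to the normalization $n^{\delta/2}$ factors, which you handle correctly); the interchange of sum and limit is justified by the absolute convergence you established. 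Your route buys a cleaner, induction-free conclusion and makes explicit the ``rigidity on vertical lines'' phenomenon that is really driving the result, at the cost of importing the almost-periodicity/mean-value machinery; the paper's route is more elementary, using only the one-variable limit lemma \cite[Lemma~11.1]{Ap} already in play elsewhere. Both are complete proofs.
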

	\begin{proof} By Proposition ~\ref{LI-INS}, $T$ is well defined and by the assumption, $T$ is a densely defined operator.
	Let $c \in \mathbb R$ and define a unitary $U_c$ by setting $U_c(n^{-s})=n^{ic} n^{-s},$ $n \in \mathrm{supp}(\mf a),$ and extending it linearly and continuously to $\mathscr H_{\mf a}.$  
Note that  
\beq 	\label{action-Uc}
	(U_c f_b)(s) = \sum_{n \in \mathrm{supp}(\mf a)} n^{ib} b_n U_c(n^{-s}) = f_{b+c}(s),  \quad b, c \in \mathbb R, \, s \in \mathbb H_\rho,
\eeq	
and hence	$\mathcal D(T)$ is invariant under $U_c.$ It follows that for any $b, c \in \mathbb R,$ 
	\beq \label{trans-inv}
	U_c T f_b = ib U_c f_b \overset{\eqref{action-Uc}}= ib f_{b+c},
	\eeq
	and consequently,
	\beqn
	(T-icI) U_{c}f_b &\overset{\eqref{action-Uc}}=& (T-icI)f_{b+c} \\
	&=& i(b+c)f_{b+c} - ic f_{b+c} \\
	& \overset{\eqref{trans-inv}}=& U_c T f_b.
	\eeqn
Thus, $T$ is a $\mathscr T$-homogeneous operator in $\mathscr H_\mf a.$
	
	Assume that \eqref{Mdelta} holds for some $\delta > 0,$ and let $h \in \mathcal D(T^*).$ Thus, there exists a Dirichlet series $g_h \in \mathscr H_{\mf a}$ such that 
	\beqn
-ib\inp{h}{f_b} = \inp{h}{Tf_b}=\inp{g_h}{f_b}, \quad b \in \mathbb R.
	\eeqn
	It follows that 
\beq \label{phi-1-phi-2}	
	-ib\phi_1(ib) = \phi_2(ib), \quad b \in \mathbb R,
\eeq	
where $\phi_1$ and $\phi_2$ are formal Dirichlet series given by
		\beqn
		\phi_{1}(s) = \sum_{n \in \mathrm{supp}(\mf a)} \frac{1}{a_n} \hat{h}(n)b_n n^{-s}, ~ \phi_{2}(s) = \sum_{n \in \mathrm{supp}(\mf a)} \frac{1}{a_n}\hat{g_h}(n)b_n n^{-s}
		\eeqn
		with $\hat{h}(\cdot)$ and $\hat{g_h}(\cdot)$ denoting the Dirichlet coefficients of the Dirichlet series $h$ and $g_h,$ respectively.
		By the Cauchy-Schwarz inequality,
		\beqn	
		\Big|\sum_{n \in \mathrm{supp}(\mf a)} \frac{1}{a_n} \hat{h}(n)b_n n^{\delta}\Big|   & \Le &  \sum_{n \in \mathrm{supp}(\mf a)} \frac{1}{a_n} |\hat{h}(n)| b_n n^{\delta} \\
	&  \Le & \|h\|_{\mathscr H_\mf a} \Big(\sum_{n \in \mathrm{supp}(\mf a)} \frac{n^{2\delta}b^2_n}{a_n}\Big)^{1/2},
	\eeqn
		which is finite by \eqref{Mdelta}.
		Thus, $\phi_{1}$ is absolutely convergent on $\mathbb H_{-\delta}.$ A similar argument shows that $\phi_2$ is also absolutely convergent on $\mathbb H_{-\delta}.$ Hence $s\phi_{1}(s)$ and $\phi_{2}(s)$ are holomorphic functions on $\mathbb H_{-\delta}.$ Since $-s\phi_{1}(s)$ and $\phi_{2}(s)$ agree on the imaginary axis (see \eqref{phi-1-phi-2}), by the identity theorem (see \cite{ShSt}), 
		\beq \label{phi-1-phi-2-second}	
		-s\phi_{1}(s) = \phi_{2}(s), \quad s \in \mathbb H_{-\delta}. 
		\eeq
We claim that $\hat{\phi}_1(n) = 0$ and $\hat{\phi}_2(n) = 0$ for every integer $n \Ge 1$ (although we need to check this for every $n \in \mathrm{supp}(\mf a),$ we find it convenient to check the same for all positive integers $n$), where $\hat{\phi}_j(n)$ denotes the $n$th Dirichlet coefficient of $\phi_j, j = 1,2.$ First we show that $\hat{\phi}_1(1) = 0.$ Indeed, for $s \in \mathbb H_{0},$ by \eqref{phi-1-phi-2-second},
		\beqn 
		 \lim_{s \rar \infty} \phi_{1}(s) &=& -\lim_{s \rar \infty} \frac{\phi_{2}(s)}{s} = 0,\eeqn
		 which yields $\hat{\phi}_1(1) = 0.$ 
		 Hence, we obtain,
		 \beqn
		   \lim_{s \rar \infty} \phi_{2}(s)  = -\lim_{s \rar \infty} s \phi_{1}(s) &=& -\lim_{s \rar \infty} \frac{s}{2^{s}} \lim_{s \rar \infty}2^{s}\phi_{1}(s) = 0,
		  \eeqn
		which yields that $\hat{\phi}_2(1) = 0.$ 
		Assuming that $\hat{\phi}_1(k) = 0$ and $\hat{\phi}_2(k) = 0$ for $k=1, \ldots, n-1,$ we get
		 \beqn
		  \sum_{m=n}^{\infty} \hat{\phi}_1(m) \Big(\frac{m}{n}\Big)^{-s} = 
		  -\frac{1}{s}\sum_{m=n}^{\infty}\hat{\phi}_2(m)\Big(\frac{m}{n}\Big)^{-s}, \quad s \in \mathbb H_{0},
		  \eeqn
		 and hence letting $s \rar \infty,$ we obtain $\hat{\phi}_1(n) = 0$ and $\hat{\phi}_2(n) = 0.$
		 Since $\hat{\phi}_1(n) = \frac{1}{a_n} \hat{h}(n)b_n$ for every integer $n \Ge 1,$ it follows that $h=0.$  
This shows that $\mathcal D(T^*) = \{0\}.$
 		\end{proof}
  	
  	\begin{proof}[Proof of Theorem~\ref{t-inv-thm}]
  	Part (i)  follows from Corollary~\ref{translate-k} and Lemma~\ref{suff-t-inv}. To see (ii), let $a > \rho$ and choose $\delta > 0$ such that $a-\delta> \rho.$ For $b_n =  a_{n}n^{-a},$ $n \in \mathrm{supp}(\mf a),$ note that
  	\beqn
  	\sum_{n \in \mathrm{supp}(\mf a)} \frac{n^{2\delta}b_n^{2}}{a_n} = \sum_{n \in \mathrm{supp}(\mf a)} a_n n^{-2(a-\delta)} = \kappa_\mf a(a-\delta, a-\delta) < \infty.
  	\eeqn
It now follows from Lemma~\ref{suff-t-inv} that $\mathcal D(T^*)$ is trivial. Since a densely defined linear operator is closable if and only if its adjoint is densely defined (see \cite[Theorem~1.8(i)]{Sh}), the remaining part follows from (ii).
  	\end{proof}
  	
  	We do not know whether or not there exists a closed $\mathscr T$-homogeneous operator on a reproducing kernel Hilbert space associated with a Dirichlet series kernel.
  	
 \vskip.2cm

 \textit{Acknowledgment:}
The authors convey their sincere thanks to Md. Ramiz Reza, Soumitra Ghara and Rahul Singh for some helpful discussions concerning the subject of the paper.
\vskip.3cm

\textit{Declarations:}

\vskip.2cm

\noindent
{\bf Conflict of interest} The authors declare that they have no conflict of interest.

{}
\end{document}